\newtheorem{Thm}{Theorem}[section]
\newtheorem{Prp}[Thm]{Proposition}
\newtheorem{Cor}[Thm]{Corollary}
\newtheorem{Lma}[Thm]{Lemma} 
\theoremstyle{definition}
\newtheorem{Def}[Thm]{Definition}
\newtheorem{Rk}[Thm]{Remark}
\numberwithin{equation}{section}
\newcommand{\LBi}{\mathfrak{LBi}}
\newcommand{\fD}{\mathfrak{D}}
\newcommand{\ad}{\mathrm{ad}}
\newcommand{\bGL}{\mathbf{GL}}
\newcommand{\Spec}{\mathrm{Spec}}
\newcommand{\End}{\mathrm{End}}
\newcommand{\Aut}{\mathrm{Aut}}
\newcommand{\Hom}{\mathrm{Hom}}
\newcommand{\bAut}{\mathbf{Aut}}
\newcommand{\KK}{\mathbb{K}}
\newcommand{\LL}{\mathbb{L}}
\newcommand{\Gal}{\mathrm{Gal}}
\newcommand{\sll}{\mathfrak{sl}}
\newcommand{\su}{\mathfrak{su}}
\newcommand{\g}{\mathfrak{g}}
\newcommand{\bo}{\mathfrak{b}}
\newcommand{\h}{\mathfrak{h}}
\newcommand{\Ad}{\mathrm{Ad}}
\newcommand{\bA}{\mathbf{A}}
\newcommand{\bE}{\mathbf{E}}
\newcommand{\bH}{\mathbf{H}}
\newcommand{\bC}{\mathbf{C}}
\newcommand{\bG}{\mathbf{G}}
\newcommand{\bbQ}{\mathbb{Q}}
\newcommand{\bbZ}{\mathbb{Z}}
\newcommand{\Id}{\mathrm{Id}}
\newcommand{\CYB}{\mathrm{CYB}}
\newcommand{\drj}{\mathrm{DJ}}
\newcommand{\bdr}{\mathrm{BD}}
\newcommand{\Lie}{\mathrm{Lie}}
\newcommand{{\Kb}}{\overline{K}}
\newcommand{{\dr}}{\partial r}
\begin{document}
\title{On the Classification of Lie Bialgebras by Cohomological Means}
\author[S.\ Alsaody and A.\ Pianzola]{Seidon Alsaody$^1$ and Arturo Pianzola$^1$ $^2$}
\address{$^1$ Department of Mathematical and Statistical Sciences, University of Alberta, Edmonton, AB T6G 2G1, Canada.}
\address{$^2$ Centro de Altos Estudios en Ciencias Exactas, Avenida de Mayo 866, (1084), Buenos Aires, Argentina.}
\email{seidon.alsaody@gmail.com, a.pianzola@gmail.com}
\thanks{S.\ Alsaody wishes to thank the Knut and Alice Wallenberg Foundation for the grant KAW 2015.0367, by means of which he was supported as a postdoctoral researcher at 
Institut Camille Jordan, Lyon. A.\ Pianzola wishes to thank NSERC and Conicet for their continuing support.}

\begin{abstract} We approach the classification of Lie bialgebra structures on
simple Lie algebras from the viewpoint of descent and non-abelian cohomology. We achieve a description of the problem
in terms faithfully flat cohomology over an arbitrary ring over $\mathbb{Q}$, and solve it for Drinfeld--Jimbo Lie bialgebras over fields of characteristic zero. We consider the classification up
to isomorphism, as opposed to equivalence, and treat split and non-split Lie algebras alike. We moreover give a new interpretation of scalar multiples of Lie bialgebras hitherto studied using twisted Belavin--Drinfeld cohomology. 
\end{abstract}

\subjclass[2010]{17B62, 17B37, 20G10}
\keywords{Lie bialgebra, quantum group, faithfully flat descent, Galois cohomology}
\maketitle

\section{Introduction}
The ``linearization problem" for quantum groups, outlined in spirit by Drinfeld \cite{D}, and solved in the celebrated work of Etingof and  Kazhdan 
\cite{EK1} and \cite{EK2}, naturally leads to the study of Lie bialgebras over the power series ring $R = \mathbb{C}[[t]].$ If $\g$ is a finite dimensional (necessarily) 
split simple complex Lie algebra one can try to understand all possible Lie bialgebra structures that can be put on the $R$--Lie algebra $\g \otimes_\mathbb{C} R$. This is exactly the program started by Kadets, Karolinsky, Pop and Stolin and
pursued in \cite{KKPS} and other papers, where this is done by considering  the (algebraic) Laurent series field $\KK = \mathbb{C}((t))$, and introducing (twisted and untwisted) Belavin--Drinfeld cohomologies. These cohomologies parametrized the possible Lie bialgebra structures on $\g \otimes_\mathbb{C} \KK,$ and they were computed on a case-by-case fashion for the classical types.

We recall that the Belavin--Drinfeld theorem from \cite{BD} gives a complete list (up to equivalence) of all possible Lie bialgebra structures on 
$\g \otimes_\mathbb{C} \overline{\KK}.$ It is thus natural to approach the problem at hand by means of usual Galois cohomology. This is done in \cite{PS}, 
where the Belavin--Drinfeld cohomologies are shown to be usual Galois cohomology with values in the algebraic group $\bC(\bG, r_\bdr)$  
(the centralizer in the adjoint group $\bG$ of $\g$ of the given Belavin--Drinfeld matrix $r_\bdr$).
The main results of \cite{PS} state that:
\medskip

(a) untwisted Belavin--Drinfeld cohomologies are the usual Galois cohomologies $H_\Gal^1(\KK, \bC(\bG, r_\bdr).$\

(b) for the Drinfeld--Jimbo $r$-matrix $r_{\rm DJ}$ the twisted Belavin--Drinfeld cohomologies can be expressed in terms of the Galois cohomology set $H_\Gal^1(\KK, \widetilde{\bC}
(\bG, r_\drj)$ for a twisted form $\widetilde{\bC}(\bG, r_\drj)$ of the $\KK$-algebraic group $\bC(\bG, r_{\rm DJ})$. This form is split by the quadratic extension $\KK(\sqrt{t})$ of $\KK$.

(c) $H_\Gal^1(\KK, \bC(\bG, r_\drj)$ is trivial by Hilbert 90, and  $H^1(\KK, \widetilde{\bC}(\bG, r_\drj)$ is trivial by a theorem of Steinberg that is also 
used to establish the correspondence in (b). As a consequence, there are unique (up to Belavin--Drinfeld equivalence with gauge group $\bG$) corresponding 
Lie bialgebra structures on $\g$ with prescribed doubles (namely $\g \times \g$ in the untwisted case, and $\g \otimes_\KK \LL$ in the twisted case).
 \medskip

The main objective of the present paper is to develop the theory of faithfully flat descent for Lie bialgebras over rings, with emphasis on what this theory is best suited for: 
the classification of twisted forms of a given Lie bialgebra {\it up to isomorphism} and \emph{without the restriction that the underlying Lie algebras be split}. This is the main
difference between our work and the recent paper \cite{KPS}, where the authors use Galois descent to obtain far-reaching results about Lie bialgebra structures on split Lie algebras up to equivalence.
The Belavin--Drinfeld classification is up to equivalence, in the sense where two coboundary Lie bialgebra structures $\partial r$ and $\partial r'$ on a split Lie algebra $\g$ are 
considered equivalent if
\[r'=\alpha(\Ad_X\otimes\Ad_X)(r)\]
for some invertible scalar $\alpha$ and some $X$ in a suitably chosen group with corresponding simple Lie algebra $\g$. This relation is not comparable to isomorphism. 
On the one hand, scalar multiples of
$r$-matrices in general lead to non-isomorphic Lie bialgebras. On the other hand, non-equivalent Lie bialgebra structures may still be isomorphic if the Lie algebra admits 
outer automorphisms. Nevertheless, the flexibility of the point of view that we take allows us to recover and in some cases explain all the results up to equivalence known heretofore.
As we will see, it is well suited for understanding Lie bialgebras whose underlying Lie algebra is not necessarily split over arbitrary fields of characteristic zero, a topic that has been little investigated in the literature.

In the appendix of \cite{KPS}, the authors classify Lie bialgebra structures on $\sll(2,R)$ and $\sll(3,R)$ for a discrete valuation ring $R$ using orders and lattices. It would
therefore be interesting to find a cohomological interpretation and generalization of these results. Another instance where Lie bialgebras over rings are discussed
is \cite{BFS}, where solutions to the quantum Yang--Baxter equation that arise from Frobenius algebras over rings are treated.

After the necessary definitions in this section, the paper proceeds with the statement of the formalism of faithfully flat descent for Lie bialgebras in Section 2. In Section 3 we give a 
description of the automorphism groups of Belavin--Drinfeld Lie bialgebras defined over the base ring. Our major result is in Section 4, where we solve the classification problem for standard
Lie bialgebras over arbitrary fields of characteristic zero. In Section 5 we turn our attention to Lie bialgebras that are locally scalar multiples of Belavin--Drinfeld structures. This includes
and provides context to previous results on twisted Belavin--Drinfeld bialgebras. In the final Section 6 we review some known classification results in the light of the results
of the previous sections.

\subsection*{Acknowledgement} We are grateful to Alexander Stolin for fruitful discussions.

\subsection{Lie Bialgebras over Rings}
The importance of considering Lie bialgebras over rings that are not fields was explained in the introduction. Throughout, we fix a unital, commutative ring $R$. All unadorned tensor products are understood to be over $R$. By an
\emph{$R$-ring} we understand a unital, commutative $R$-algebra. We will further always assume that $\Spec\ R$, as a scheme,
has characteristic 0; this amounts to saying that $R$ is a $\bbQ$-ring. For any $R$-module $M$ we will always write $\kappa$ for the linear map $M\otimes M\to M\otimes M$ defined by the transposition $x\otimes y\mapsto y\otimes x$ of tensor factors. Let $M$ be an $R$-module. A
\emph{Lie cobracket} on $M$ is an $R$-linear map $\delta:M\to M\otimes M$ that is \emph{anti-symmetric} in the sense that
\[\kappa\circ\delta=-\delta,\]
and satisfies the \emph{co-Jacobi identity}
\[(\delta\otimes\Id_M)\circ\delta=(\Id_M\otimes\delta)\circ\delta+(\Id_M\otimes\kappa)\circ(\delta\otimes\Id_M)\circ\delta.\]
The pair $(M,\delta)$ is called a \emph{Lie coalgebra}. From the definition it follows that the composition
\[\xymatrix{M^*\otimes M^*\ar[r]^-{\mathrm{can}}&(M\otimes M)^*\ar[r]^-{\delta^*}&M^*}\]
is a Lie bracket on $M^*$.

\begin{Rk} If $M$ is a finitely generated projective module, then the canonical map $\mathrm{can}$ is an isomorphism. In that case,
identifying $M^*\otimes M^*$ with $(M\otimes M)^*$, it can be verified that $\delta$ is a Lie cobracket if and only if $\delta^*$ is a Lie bracket.
\end{Rk}

If $\g=(\g,[,])$ is a Lie algebra, and $\delta$ is a Lie cobracket on $\g$ satisfying the
\emph{cocycle condition}
\[\delta([a,b])=(\ad_a\otimes 1+1\otimes\ad_a)\delta(b)-(\ad_b\otimes
1+1\otimes\ad_b)\delta(a)\]
for any $a,b\in \g$, then $(\g,\delta)$ is called a \emph{Lie bialgebra}. If $(\g,\delta)$ and $(\g',\delta')$ are Lie bialgebras, then a map
$\phi: \g\to\g'$ is a morphism of Lie bialgebras $(\g,\delta)\to(\g',\delta')$ if it
is a Lie algebra morphism that in addition satisfies
\begin{equation}\label{morphism} (\phi\otimes\phi)\circ\delta=\delta'\circ\phi. \end{equation}
If $\phi$ is invertible, then $\phi^{-1}$ is a morphism of Lie bialgebras $(\g',\delta')\to(\g,\delta)$, and $\phi$ is called an isomorphism of Lie bialgebras. 
Thus $R$-Lie bialgebras form a category, which we denote by $\widehat\LBi_R$. We denote by $\LBi_R$ the full subcategory of $\widehat\LBi_R$ whose objects
are those Lie bialgebras whose underlying module is finitely generated and projective.

\section{Descent for Lie Bialgebras}
We will first establish the desired correspondence between twisted forms of
bialgebras and certain cohomology classes. Let $(\g,\delta)$ be a Lie
bialgebra over $R$, and let $S$ be a $R$-ring. On the $S$-algebra $\g_S=\g\otimes S$ one has a unique Lie bialgebra
structure $\delta_S$ that satisfies
\[\delta_S(x\otimes 1)=\sum (y_i\otimes 1)\otimes_S (z_i\otimes 1)\]
for all $x\in \g$, where $\sum y_i\otimes z_i=\delta(x)\in\g\otimes\g$. An
$R$-Lie bialgebra $(\g',\delta')$ is said to be an \emph{$S/R$-twisted form of
$(\g,\delta)$} if
$(\g'_S,\delta'_S)\simeq (\g_S,\delta_S)$ as $S$-bialgebras. We will mainly be interested in the case where $S$ is faithfully flat over $R$.
This includes the special case where $R$ is a field of characteristic zero and $S$ is any field extension.

Let $(\g,\delta)$ now be an $S$-Lie bialgebra, and let $\kappa:S\otimes S\to S\otimes S$ be the ($R$-linear) flip $\alpha\otimes\beta\mapsto\beta\otimes\alpha$. There are two ways to endow $\g'\otimes S$ with an $S\otimes S$-module structure; the $S\otimes S$-action
being component-wise in the first, and twisted by $\kappa$ in the second. We denote the two modules (algebras, bialgebras) by $\g'\otimes_{12} S$ and $\g'\otimes_{21} S$, respectively. Both modules (algebras, bialgebras) are seen as having the same underlying $R$-module
($R$-algebra, $R$-bialgebra) structure which we continue to denote by $\g'\otimes S$.

\begin{Def}A \emph{descent datum} on $\g$ is an isomorphism $\theta:\g\otimes_{12} S\to \g\otimes_{21} S$ of $S\otimes S$- Lie bialgebras, satisfying the equality
\[(\Id\otimes\kappa)(\theta\otimes\Id)(\Id\otimes\kappa)=(\theta\otimes\Id)(\Id\otimes\kappa)(\theta\otimes\Id)\]
of maps on $\g\otimes S\otimes S$. The triple $(\g,\delta,\theta)$ is called a \emph{Lie bialgebra with a descent datum}. A \emph{morphism of Lie
bialgebras with descent data} $(\g,\delta,\theta)\to(\g',\delta',\theta')$ is a morphism of Lie bialgebras $f:(\g,\delta)\to(\g',\delta')$ such that the diagram
\[\xymatrix{
\g\otimes_{12} S\ar[r]\ar[d]_\theta&\g'\otimes_{12} S\ar[d]^{\theta'}\\ \g\otimes_{21} S\ar[r]&\g'\otimes_{21} S}\]
commutes, where the horizontal arrows are given by $f\otimes \Id$.
\end{Def}

\begin{Rk}
In the literature, it is customary to set $\theta^0=(\Id\otimes\kappa)(\theta\otimes\Id)(\Id\otimes\kappa)$, $\theta^1=(\theta\otimes\Id)(\Id\otimes\kappa)$
and $\theta^2=(\theta\otimes\Id)$, and write the equality in the definition as $\theta^1=\theta^0\theta^2$.
\end{Rk}

Here and in what follows, if $N$ is an $R$-module and $T$ an $R$-ring, we shall often abbreviate $N\otimes T$ by $N_T$. For an $R$-ring $S$, we write $\widehat\LBi_R^S$ for the category of $S$-Lie bialgebras with descent data, and $\LBi_R^S$ for the full subcategory
formed by the objects whose underlying modules are finitely generated projective. If $(\g,\delta)\in\widehat\LBi_R$,
then the \emph{standard descent datum} on $\g\otimes S$ is the map
\[\Id_\g\otimes\kappa:(\g\otimes S)\otimes_{12} S\to(\g\otimes S)\otimes_{21} S.\]
It is straight-forward to verify that this is indeed a descent datum, and that we thus get a functor $\fD=\fD_R^S:\widehat\LBi_R\to\widehat\LBi_R^S$,
defined on objects by $\g\mapsto (\g\otimes S,\Id_\g\otimes\kappa)$, and on morphisms by $f\mapsto f\otimes\Id_S$.

A straightforward but delicate reasoning yields the expected faithfully flat descent theory for Lie bialgebras as follows. 
(See the preprint \cite{AP} of this paper for the detailed proof.)

\begin{Prp}\label{Pdescent} If the $R$-ring $S$ is faithfully flat, then $\fD$ is an equivalence of categories $\widehat\LBi_R\to\widehat\LBi_R^S$, and induces an equivalence $\LBi_R\to\LBi_R^S$.
\end{Prp}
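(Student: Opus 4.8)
The plan is to bootstrap the result from classical faithfully flat descent for modules, transporting the bracket and cobracket along the established module-level equivalence. Write $\mathrm{Mod}_R$ for the category of $R$-modules and $\mathrm{Mod}_R^S$ for the category of $S$-modules equipped with a (module) descent datum. Classical descent provides that, for $S$ faithfully flat, the functor $\fD_{\mathrm{mod}}:M\mapsto(M\otimes S,\Id_M\otimes\kappa)$ is an equivalence $\mathrm{Mod}_R\to\mathrm{Mod}_R^S$, with quasi-inverse $E$ sending a pair $(N,\theta)$ to the $R$-submodule $\{n\in N:\theta(n\otimes_{12}1)=n\otimes_{21}1\}$ of $\theta$-invariants, together with the canonical identification $E(N,\theta)\otimes S\cong N$. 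The forgetful functors to modules fit into a square exhibiting $\fD$ as lying over $\fD_{\mathrm{mod}}$, so it suffices to show that $\fD_{\mathrm{mod}}$ and $E$ lift to the bialgebra categories.

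First I would record that $\fD_{\mathrm{mod}}$ is (strong) monoidal: the canonical isomorphism $(M\otimes_R M')\otimes S\cong(M\otimes S)\otimes_S(M'\otimes S)$ identifies the standard descent datum on the left with the tensor product of the standard descent data on the right. Being a quasi-inverse of a monoidal equivalence, $E$ is then itself canonically monoidal and so commutes with tensor products over faithfully flat $S$. This is the mechanism that lets the structure maps descend, since the bracket and cobracket are morphisms whose source or target is a tensor square.

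For essential surjectivity, given $(\h,\delta_\h,\theta)\in\widehat\LBi_R^S$, set $M=E(\h,\theta)$, so that $M\otimes S\cong\h$ compatibly with descent data. The bracket $[,]_\h:\h\otimes_S\h\to\h$ is $S$-linear; because $\theta$ is required to be an isomorphism of $S\otimes S$-Lie bialgebras, $[,]_\h$ commutes with the descent data on source and target, hence descends under $E$ to an $R$-linear map $[,]_M:M\otimes_R M\to M$, and likewise $\delta_\h$ descends to $\delta_M:M\to M\otimes_R M$. It then remains to verify that $(M,[,]_M,\delta_M)$ satisfies the Jacobi identity, anti-symmetry, the co-Jacobi identity, and the cocycle condition. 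Each is an equality of $R$-linear maps between tensor powers of $M$; applying $-\otimes S$, which is exact and faithful, turns each into the corresponding identity for $\h$, which holds by hypothesis, and faithful flatness reverses the implication. Thus $M$ is an $R$-Lie bialgebra with $\fD(M)\cong(\h,\delta_\h,\theta)$. Full faithfulness is proved analogously: a morphism $\fD(\g)\to\fD(\g')$ is, by module descent, of the form $f\otimes\Id$ for a unique $R$-linear $f$, and $f$ respects bracket and cobracket if and only if $f\otimes\Id$ does, again by faithful flatness.

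Finally, the restriction to $\LBi_R\to\LBi_R^S$ follows since the property of being finitely generated and projective descends in both directions along a faithfully flat ring map. The hard part is not any single step but the bookkeeping behind the monoidal compatibility: one must track the two structures $\otimes_{12}$ and $\otimes_{21}$ through all the tensor identifications and check that $\theta$, being a bialgebra isomorphism, interacts correctly with each structure map — that is, that the cocycle condition on $\theta$ together with the compatibility of $[,]_\h$ and $\delta_\h$ with $\theta$ is precisely what is needed for the descended maps to be well defined and to inherit the axioms. This is the delicate content referred to above.
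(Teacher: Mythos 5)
The paper does not print a proof of this proposition, instead describing it as ``a straightforward but delicate reasoning'' and deferring the details to the preprint \cite{AP}; your argument is precisely the standard reduction to faithfully flat descent for modules that this remark alludes to. Your proposal is correct: the module-level equivalence, its monoidal compatibility (which lets the bracket and cobracket descend), verification of the identities by faithfulness of $-\otimes S$, and the fact that finite generation and projectivity descend along faithfully flat maps together give the stated equivalences, so no gap to report.
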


For any $R$-Lie bialgebra $(\g,\delta)$ and any faithfully flat $R$-ring $S$, we wish to classify all \emph{$S/R$-twisted forms of $\g$}, i.e.\ all $R$-Lie bialgebras $(\g',\delta')$ such that $(\g'_S,\delta'_S)\simeq(\g_S,\delta_S)$. Let $\bA=\bAut((\g,\delta))$ be the automorphism group functor of $(\g,\delta)$.
As one does for modules and algebras, we consider, for each faithfully flat $R$-ring $S$ the cohomology set $H^1(S/R,\bA):=H^1_{\mathrm{fppf}}(S/R,\bA)$, consisting of cohomology classes of cocycles, where a cocycle is an element $\phi\in\bA(S\otimes S)$ satisfying the cocycle condition, and where two cocycles $\phi$ and $\phi'$ are defined to be cohomologous if
\[\phi'=(\Id_\g\otimes\kappa)(\rho\otimes\Id_S)(\Id_\g\otimes\kappa)\phi(\rho^{-1}\otimes\Id_S)\]
for some $\rho\in\bA(S)$. The following is then a consequence of the above, and the proof is analogous to that for descent of modules.

\begin{Cor}\label{Cbij} Let $(\g,\delta)$ be a Lie bialgebra over $R$ with automorphism
group scheme $\bA$. Let $S$ be a faithfully flat $R$-ring. Then there is a $1-1$-correspondence between
$H^1(S/R,\bA)$ and $R$-isomorphism classes of $S/R$-twisted forms of $(\g,\delta)$.
\end{Cor}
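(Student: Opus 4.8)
The plan is to deduce the correspondence from the equivalence of categories $\fD:\widehat\LBi_R\to\widehat\LBi_R^S$ of Proposition \ref{Pdescent}, following the classical argument for descent of modules. The bridge between the two sides is the observation that, under $\fD$, an $R$-form $(\g',\delta')$ is sent to $\g'\otimes S$ equipped with its \emph{standard} descent datum $\Id_{\g'}\otimes\kappa$, while saying that $(\g',\delta')$ is an $S/R$-twisted form of $(\g,\delta)$ means precisely that $\g'\otimes S$ can be identified, as an $S$-Lie bialgebra, with $\g\otimes S$. Thus the problem is to compare the standard descent datum carried over from $\g'_S$ with the standard descent datum intrinsic to $\g_S$, and the cocycle measures the discrepancy.

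First I would construct the map from twisted forms to cohomology classes. Given a twisted form $(\g',\delta')$, choose an $S$-bialgebra isomorphism $\psi:\g'_S\to\g_S$. Transporting the standard descent datum on $\g'_S$ along $\psi$ yields a descent datum $\theta$ on $\g_S$, and comparing it with the standard datum produces an automorphism $\phi=\theta\circ(\Id_\g\otimes\kappa)^{-1}$ lying in $\bA(S\otimes S)$. The identity $\theta^1=\theta^0\theta^2$ defining a descent datum translates directly into the cocycle condition for $\phi$. Replacing $\psi$ by another trivialization $\psi'$ changes $\phi$ via the automorphism $\rho=\psi'\circ\psi^{-1}\in\bA(S)$ exactly according to the coboundary relation
\[\phi'=(\Id_\g\otimes\kappa)(\rho\otimes\Id_S)(\Id_\g\otimes\kappa)\,\phi\,(\rho^{-1}\otimes\Id_S),\]
so the class of $\phi$ in $H^1(S/R,\bA)$ depends only on $(\g',\delta')$. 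Finally, if $g:(\g',\delta')\to(\g'',\delta'')$ is an $R$-bialgebra isomorphism, then $g\otimes\Id_S$ commutes with the standard descent data (being defined over $R$), so $g$ can be absorbed into the choice of trivialization and the two forms yield the same class; hence the map descends to $R$-isomorphism classes.

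For the inverse map I would use the essential surjectivity of $\fD$. Given a cocycle $\phi\in\bA(S\otimes S)$, the map $\theta=\phi\circ(\Id_\g\otimes\kappa)$ is an isomorphism $\g_S\otimes_{12}S\to\g_S\otimes_{21}S$, and the cocycle condition is exactly what is needed for $(\g_S,\delta_S,\theta)$ to be an object of $\widehat\LBi_R^S$. By Proposition \ref{Pdescent} there is an $R$-Lie bialgebra $(\g',\delta')$, unique up to isomorphism, with $\fD(\g',\delta')\simeq(\g_S,\delta_S,\theta)$, and such a $(\g',\delta')$ is a twisted form since $\theta$ becomes the standard datum after forgetting the descent structure, whence $\g'_S\simeq\g_S$. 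That the two assignments are mutually inverse, and in particular that cohomologous cocycles yield $R$-isomorphic forms, follows from the full faithfulness of $\fD$: a morphism in $\widehat\LBi_R^S$ intertwining the descent data attached to $\phi$ and $\phi'$ corresponds to a unique $R$-bialgebra morphism of the descended forms.

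The main obstacle I anticipate is purely bookkeeping. One must track the competing $S\otimes S$-module structures $\otimes_{12}$ and $\otimes_{21}$, together with the places where the flip $\kappa$ intervenes, carefully enough that the abstract descent-datum identity $\theta^1=\theta^0\theta^2$ reproduces \emph{precisely} the stated cocycle and coboundary formulas, with the factors of $\Id_\g\otimes\kappa$ and the left/right placement of $\rho^{\pm 1}$ in the correct order. None of this is conceptually hard once the dictionary is fixed, which is why the statement is justly called a corollary: the only genuine input is Proposition \ref{Pdescent}, and the remaining verifications mirror the module case verbatim.
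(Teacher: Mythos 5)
Your proposal is correct and follows exactly the route the paper intends: the paper gives no written proof of Corollary \ref{Cbij} beyond declaring it a consequence of Proposition \ref{Pdescent} ``analogous to that for descent of modules,'' and your argument is precisely that standard dictionary between trivializations/descent data and cocycles/coboundaries, carried out via the equivalence $\fD$. The bookkeeping caveats you flag (the $\otimes_{12}$ versus $\otimes_{21}$ structures and the placement of $\Id_\g\otimes\kappa$ and $\rho^{\pm1}$) are indeed the only delicate points, and your formulas match the conventions stated in the paper.
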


\section{Belavin--Drinfeld Lie Bialgebras and Their Automorphisms}
\subsection{Coboundary Lie Bialgebras and $r$-matrices}
A Lie bialgebra $(\g,\delta)$ is said to be a \emph{coboundary Lie bialgebra} if $\delta=\dr$ for some $r\in\g\otimes\g$, i.e.\ if
\begin{equation}\label{coboundary} \delta(a)=(\ad_a\otimes 1+1\otimes\ad_a)(r) \end{equation}
for all $a\in\g$. Using classical notation, alluding to the universal enveloping algebra, this is written as
\[\delta(a)=[a\otimes 1+1\otimes a,r]\]
In general, not every $r\in\g\otimes\g$ gives rise to a Lie bialgebra structure via the above formula. We will come back to this point below. 

Let $\bG$ be a split simple adjoint group over $R$ with $\g=\Lie(\bG)$. By Chevalley uniqueness \cite[XXIII.5]{SGA3} and the fact that $R$ is a $\bbQ$-ring, up to isomorphism we may and will assume that $\bG$
is defined over $\bbQ$; thus $\g=\g_0\otimes_\bbQ R$ for a split simple $\bbQ$-Lie algebra $\g_0$. Let $\bE$ be a pinning of $\bG$. The pinning provides a 
split maximal torus $\bH$, a splitting Cartan subalgebra $\h=\Lie(\bH)$ of $\g$, a base $\Gamma$ of the corresponding
root system $\Delta$, a set of positive roots $\Delta^+\subset\Delta$ and, for each $\alpha\in\Delta$, a Chevalley generator $X_\alpha\neq 0$ of $\g_\alpha$. This moreover provides a Casimir
element $\Omega\in\g\otimes\g$, and we write $\Omega_\h$ for its Cartan part. (More precisely, writing $\h=\h_0\otimes_\bbQ R$, where $\h_0$ is the Cartan 
subalgebra of $\g_0$ corresponding to the above data, and taking an
orthonormal basis $(h_i)$ of $\h_0$,
$\Omega_\h$ is the image of $\sum_i h_i\otimes h_i$ under the base change $\bbQ\to R$.) 

\begin{Rk}\label{Omega} It is known that in the above setting
\begin{equation}\label{Q}
\bbQ\Omega=\{s\in\g_0\otimes\g_0|\forall a\in\g_0: (\ad_{a}\otimes 1+1\otimes\ad_{a})(s)=0\}, 
\end{equation}
and that any automorphism of $\g_0$ fixes $\Omega$. From this we will deduce that
\begin{equation}\label{R}
R\Omega=\{s\in\g\otimes\g|\forall a\in\g: (\ad_{a}\otimes 1+1\otimes\ad_{a})(s)=0\},
\end{equation}
and that any automorphism of $\g$ fixes $\Omega$. Indeed, consider the $\bbQ$-linear map
\[F_0: \g_0\otimes\g_0\to \Hom_\bbQ(\g_0,\g_0\otimes\g_0), s\mapsto \partial_s\]
where $\partial_s(a)=[1\otimes a+a\otimes 1,s]$. The kernel of $F_0$ is the right hand side of \eqref{Q}, which thus is $\bbQ\Omega$. Base change gives a map
\[F_0\otimes_\bbQ\Id_R:(\g_0\otimes_\bbQ\g_0)\otimes_\bbQ R\to \Hom_\bbQ(\g_0,\g_0\otimes_\bbQ\g_0)\otimes_\bbQ R,\]
where the right hand side is (canonically) isomorphic to $\Hom_R(\g,(\g_0\otimes_\bbQ\g_0)\otimes_\bbQ R)$ since $\g_0$ is finite-dimensional. Since $R$ is flat over $\bbQ$, the 
kernel of this map is $\bbQ\Omega\otimes_\bbQ R$. Identifying  $(\g_0\otimes_\bbQ\g_0)\otimes_\bbQ R$ with $\g\otimes_R\g$ we thus deduce that the kernel of the map
\[F:\g\otimes\g\to \Hom_R(\g,\g\otimes\g), s\mapsto \partial_s,\]
is $R\Omega$. Furthermore $\Aut(\g)=\Aut(\g_0)_R$, whence for any $R$-ring $S$ and any $\phi\in\Aut(\g)(S)$,
\[(\phi\otimes_S\phi)(\Omega\otimes_\bbQ 1_S)=\Omega\otimes_\bbQ 1_S\]
Note further that since $\g=\g_0\otimes_{\bbQ}R$ and $\Omega$ is defined over $\bbQ$ and non-zero in $\g_0\otimes\g_0$, 
it follows that if $\lambda\in R$ satisfies $\lambda\Omega=0$ in $\g\otimes\g$, then $\lambda=0$. 
\end{Rk}

By an \emph{$r$-matrix on $\g$} we understand an element $r\in\g\otimes\g$ satisfying $\CYB(r)=0$ and $r+\kappa(r)=\lambda\Omega$ for some $\lambda\in R$. Here
the classical Yang--Baxter operator $\CYB$ is defined by
\[\CYB(r)=[r_{12},r_{13}]+[r_{12},r_{23}]+[r_{13},r_{23}],\]
which, writing $r=\sum_i s_i\otimes t_i$, is shorthand for\footnote{The notation is motivated by the classical situation where one passes to the
universal enveloping algebra $U(\g)$ and sets e.g.\ $(s\otimes t)_{13}=s\otimes1\otimes t$. The bracket then denotes the commutator in $U(\g)^{\otimes3}$. This is however merely a convenient notation and there is no need to resort to
$U(\g)$.}
\[\sum_{i,j}\left([s_i,s_j]\otimes t_i\otimes t_j+s_i\otimes[t_i,s_j]\otimes t_j+s_i\otimes s_j\otimes[t_i,t_j]\right)\in\g\otimes\g\otimes\g.\]

It is straightforward to check that if $r$ is an $r$-matrix, then $\partial r$ is a Lie bialgebra structure on $\g$. Conversely, if $R=K$ is
an algebraically closed field and $\delta$ is a Lie bialgebra structure on $\g$, then $\delta=\partial r$ for some $r$-matrix $r$. 

\begin{Rk} When $R$ is a field, $r$-matrices $r$ satisfying $r+\kappa(r)=0$ are called skew-symmetric. These are excluded from the Belavin--Drinfeld classification. In the sequel we will only consider $r$-matrices satisfying $r+\kappa(r)=\lambda\Omega$ with 
$\lambda\in R^*$, i.e.\ those that remain non-skew symmetric under any base change $R\to K$ with $K$ a field. 
\end{Rk}

Below we list a few properties of $r$-matrices and their coboundary structures for later use.

\begin{Lma}\label{Lsurj} Let $r_1$ and $r_2$ be two $r$-matrices over $\g$, and let $\phi$ be a surjective endomorphism of the Lie algebra $\g$.
Then $\phi$ is a morphism of Lie bialgebras $(\g,\dr_1)\to(\g,\dr_2)$ if and only if $(\phi\otimes\phi)(r_1)-r_2\in R\Omega$.
\end{Lma}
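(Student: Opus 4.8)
The plan is to unwind the definition of a Lie bialgebra morphism and use Lemma's hypothesis that $\phi$ is a surjective Lie algebra endomorphism to reduce the bialgebra compatibility condition \eqref{morphism} to a statement about the difference $(\phi\otimes\phi)(r_1)-r_2$ lying in the ``null space'' of the coboundary operation, which by Remark \ref{Omega} is precisely $R\Omega$. Since $\phi$ is assumed to be a Lie algebra morphism, the only content of being a bialgebra morphism is the identity $(\phi\otimes\phi)\circ\partial r_1=\partial r_2\circ\phi$. First I would write out both sides on an arbitrary element $a\in\g$. The right side is $(\partial r_2)(\phi(a))=(\ad_{\phi(a)}\otimes 1+1\otimes\ad_{\phi(a)})(r_2)$. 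For the left side, I would use that $\phi$ is a Lie algebra morphism to commute $\phi\otimes\phi$ past the adjoint operators: the key algebraic fact is $(\phi\otimes\phi)(\ad_a\otimes 1+1\otimes\ad_a)=(\ad_{\phi(a)}\otimes 1+1\otimes\ad_{\phi(a)})(\phi\otimes\phi)$, which holds because $\phi([a,x])=[\phi(a),\phi(x)]$. Applying this gives $(\phi\otimes\phi)(\partial r_1)(a)=(\ad_{\phi(a)}\otimes 1+1\otimes\ad_{\phi(a)})\big((\phi\otimes\phi)(r_1)\big)$.

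Subtracting the two expressions, the morphism condition $(\phi\otimes\phi)\circ\partial r_1=\partial r_2\circ\phi$ becomes the requirement that
\[(\ad_{\phi(a)}\otimes 1+1\otimes\ad_{\phi(a)})\big((\phi\otimes\phi)(r_1)-r_2\big)=0\quad\text{for all }a\in\g.\]
Here is where surjectivity of $\phi$ enters crucially: as $a$ ranges over $\g$, the element $\phi(a)$ ranges over all of $\g$, so the condition is equivalent to $(\ad_b\otimes 1+1\otimes\ad_b)\big((\phi\otimes\phi)(r_1)-r_2\big)=0$ for every $b\in\g$. By the characterization \eqref{R} in Remark \ref{Omega}, the set of $s\in\g\otimes\g$ killed by $\ad_b\otimes 1+1\otimes\ad_b$ for all $b\in\g$ is exactly $R\Omega$. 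Thus the morphism condition holds if and only if $(\phi\otimes\phi)(r_1)-r_2\in R\Omega$, which is the claim.

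**The main obstacle** I anticipate is the surjectivity step: one must make sure that the implication genuinely runs in both directions. The ``if'' direction is immediate — if the difference lies in $R\Omega$ then the coboundary operator kills it, and reading the computation backwards shows $\phi$ is a bialgebra morphism, here not even needing surjectivity. For the ``only if'' direction, surjectivity is exactly what lets us pass from ``vanishing for all elements of the form $\phi(a)$'' to ``vanishing for all $b\in\g$,'' which is what \eqref{R} requires. If $\phi$ were merely injective or an arbitrary endomorphism this equivalence could fail, since then $(\phi\otimes\phi)(r_1)-r_2$ need only be annihilated by $\ad_{\phi(a)}\otimes 1+1\otimes\ad_{\phi(a)}$ on the proper subalgebra $\phi(\g)$, a strictly weaker constraint than membership in $R\Omega$. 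One should also double check that \eqref{R} is stated over $\g$ (rather than just $\g_0$) so that it applies directly over the base ring $R$; Remark \ref{Omega} provides exactly this, so no extra base-change argument is needed beyond invoking it.
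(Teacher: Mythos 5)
Your proof is correct and follows essentially the same route as the paper: reduce the morphism condition to $(\ad_{\phi(a)}\otimes 1+1\otimes\ad_{\phi(a)})\bigl((\phi\otimes\phi)(r_1)-r_2\bigr)=0$ for all $a$, then invoke surjectivity together with the characterization \eqref{R} of $R\Omega$ in Remark \ref{Omega}. You simply spell out the intermediate commutation step and the asymmetry of the two directions in more detail than the paper does.
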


\begin{proof} Set $s=(\phi\otimes\phi)(r_1)-r_2$. Combining \eqref{morphism} and \eqref{coboundary}, one sees that $\phi$ is a Lie bialgebra morphism if and only if $(\ad_{\phi(a)}\otimes 1+1\otimes\ad_{\phi(a)})(s)=0$ for all $a\in\g$. 
Remark \ref{Omega} then implies that $s\in R\Omega$, since $\phi$ is surjective.
\end{proof}

\begin{Lma}\label{Laut} Let $\phi\in\Aut(\g)$ and let $r$ be an $r$-matrix on $\g$. Then $\phi$ is an automorphism of $(\g,\dr)$ if and only if $(\phi\otimes\phi)(r)=r$.
\end{Lma}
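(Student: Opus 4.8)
The plan is to deduce this from Lemma~\ref{Lsurj} and then to upgrade the resulting membership in $R\Omega$ to an actual equality. First I observe that since $\phi\in\Aut(\g)$ it is in particular a bijective Lie algebra endomorphism, and that by the discussion following \eqref{morphism} the inverse of an invertible Lie bialgebra morphism is again one. Hence $\phi$ is an automorphism of $(\g,\dr)$ precisely when it is a morphism of Lie bialgebras $(\g,\dr)\to(\g,\dr)$. Applying Lemma~\ref{Lsurj} with $r_1=r_2=r$ (using that $\phi$ is surjective), this happens if and only if
\[
s:=(\phi\otimes\phi)(r)-r\in R\Omega.
\]
The ``if'' direction of the lemma is then immediate: if $(\phi\otimes\phi)(r)=r$ then $s=0\in R\Omega$, so $\phi$ is a bialgebra morphism and, being invertible, an automorphism.

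For the converse I must show that $s\in R\Omega$ forces $s=0$. I write $s=\lambda\Omega$ with $\lambda\in R$, and recall that $r$ is an $r$-matrix, so $r+\kappa(r)=\mu\Omega$ for some $\mu\in R$. The key inputs are that $\kappa$ commutes with $\phi\otimes\phi$ (both being built diagonally from $\phi$ and the flip), that $\kappa(\Omega)=\Omega$ since the Casimir element is symmetric, and that $(\phi\otimes\phi)(\Omega)=\Omega$ by Remark~\ref{Omega}. Applying $\kappa$ to the equation $(\phi\otimes\phi)(r)-r=\lambda\Omega$ gives $(\phi\otimes\phi)(\kappa(r))-\kappa(r)=\lambda\Omega$, and adding this to the original yields
\[
(\phi\otimes\phi)(r+\kappa(r))-(r+\kappa(r))=2\lambda\Omega.
\]
Substituting $r+\kappa(r)=\mu\Omega$ and using $(\phi\otimes\phi)(\Omega)=\Omega$, the left-hand side vanishes, so $2\lambda\Omega=0$. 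By the torsion-freeness statement at the end of Remark~\ref{Omega} (that $\lambda'\Omega=0$ implies $\lambda'=0$) this forces $2\lambda=0$, and since $R$ is a $\bbQ$-ring the scalar $2$ is invertible, so $\lambda=0$. Thus $(\phi\otimes\phi)(r)=r$.

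The step I expect to be the crux is precisely this last upgrade. Membership in $R\Omega$ only pins down $\phi$ up to the one-parameter ambiguity along the Casimir, and it is the interplay between the symmetry correction $r+\kappa(r)=\mu\Omega$ (which encodes the antisymmetry of $\dr$), the invariance $(\phi\otimes\phi)(\Omega)=\Omega$, and the characteristic-zero hypothesis that collapses this ambiguity to the trivial one. Everything else in the argument is a formal consequence of Lemma~\ref{Lsurj} and the invertibility of $\phi$.
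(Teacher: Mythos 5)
Your proof is correct and follows essentially the same route as the paper's: both reduce to Lemma~\ref{Lsurj} to get $(\phi\otimes\phi)(r)=r+\mu\Omega$, then apply $\kappa$ and the relation $r+\kappa(r)=\lambda\Omega$ together with $(\phi\otimes\phi)(\Omega)=\Omega$ to force $2\mu\Omega=0$, concluding via the torsion-freeness in Remark~\ref{Omega} and the fact that $R$ is a $\bbQ$-ring. The only difference is that you spell out the trivial ``if'' direction, which the paper leaves implicit.
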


\begin{proof} From the previous lemma we know that $(\phi\otimes\phi)(r)=r+\mu \Omega$ for some $\mu\in R$. 
Moreover, $r$ satisfies $r+\kappa(r)=\lambda\Omega$ for some $\lambda\in R$.
Thus
\[(\phi\otimes\phi)(r+\kappa(r))=(\phi\otimes\phi)(r)+(\phi\otimes\phi)\kappa(r)=r+\mu\Omega+\kappa(r)+\mu\Omega=(\lambda+2\mu)\Omega\]
while the left hand side equals $(\phi\otimes\phi)(\lambda\Omega)=\lambda\Omega$. Thus $2\mu\Omega=0$, whence $\mu=0$ by Remark \ref{Omega}.
\end{proof}

\begin{Lma}\label{Lelim} Assume that $R$ is an integral domain. Let, for $i=1,2$,
$r_1$ and $r_2$ be two $r$-matrices with $r_i+\kappa(r_i)=\lambda_i\Omega$ with $\lambda_i\in R^*$. If $r_2=r_1-\mu\Omega$ for some $\mu\in R$, then
either $\mu=0$ or $\mu=\lambda_1$.
\end{Lma}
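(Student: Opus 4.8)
The plan is to reduce the whole statement to a single quadratic identity in the scalar $\mu$, obtained by expanding the classical Yang--Baxter operator along the shift by $\Omega$. First I would record two facts about the Casimir. Since $\Omega$ is symmetric, $\kappa(\Omega)=\Omega$, and I set $\varphi:=\CYB(\Omega)\in\g\otimes\g\otimes\g$. Because $\g=\g_0\otimes_\bbQ R$ with $\g_0$ split simple and $\Omega$, hence $\varphi$, defined over $\bbQ$, the element $\varphi$ is a nonzero multiple of the invariant Cartan three-tensor $\sum f_{\alpha\beta\gamma}\,e^\alpha\otimes e^\beta\otimes e^\gamma$; moreover $\g\otimes\g\otimes\g\cong\g_0^{\otimes3}\otimes_\bbQ R$ is free, in particular torsion-free, over $R$. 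Note that merely comparing the symmetric parts of $r_1$ and $r_2=r_1-\mu\Omega$ yields only the linear relation $\lambda_2=\lambda_1-2\mu$, which does not constrain $\mu$; the quadratic $\CYB$ condition is therefore essential.

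Since $\CYB$ is quadratic I would polarize it: writing
\[B(x,y)=[x_{12},y_{13}]+[x_{12},y_{23}]+[x_{13},y_{23}]\]
for $x,y\in\g\otimes\g$, one has $\CYB(x)=B(x,x)$, and $B$ is $R$-bilinear (though not symmetric). Expanding along $r_2=r_1-\mu\Omega$ then gives
\[\CYB(r_2)=\CYB(r_1)-\mu\bigl(B(r_1,\Omega)+B(\Omega,r_1)\bigr)+\mu^2\varphi.\]

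The heart of the argument, and the step I expect to be the main obstacle, is the identity
\[B(r,\Omega)+B(\Omega,r)=\lambda\varphi\qquad\text{whenever }r+\kappa(r)=\lambda\Omega.\]
I would prove it by splitting $r$ into its symmetric part $\tfrac{\lambda}{2}\Omega$ and its antisymmetric part $\rho=\tfrac12\bigl(r-\kappa(r)\bigr)$. The symmetric part contributes $\tfrac{\lambda}{2}\bigl(B(\Omega,\Omega)+B(\Omega,\Omega)\bigr)=\lambda\varphi$, so it remains to show $B(\rho,\Omega)+B(\Omega,\rho)=0$ for antisymmetric $\rho$. This is the one genuinely computational point: expanding the six brackets in $\g^{\otimes3}$ and repeatedly using the invariance of $\Omega$ (in the form $\sum_\alpha\bigl([x,e_\alpha]\otimes e^\alpha+e_\alpha\otimes[x,e^\alpha]\bigr)=0$ of Remark \ref{Omega}, for a basis $(e_\alpha)$ with dual basis $(e^\alpha)$) collapses the sum to $\sum_{i,\alpha}\bigl([s_i,e_\alpha]\otimes t_i+[t_i,e_\alpha]\otimes s_i\bigr)\otimes e^\alpha$, where $\rho=\sum_i s_i\otimes t_i$; the two summands then cancel precisely because antisymmetry gives $\sum_i t_i\otimes s_i=-\sum_i s_i\otimes t_i$. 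Keeping track of which tensor slot each term occupies is the only delicate bookkeeping here.

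Finally I would combine the pieces. With $r=r_1$ and $\lambda=\lambda_1$ the identity gives $B(r_1,\Omega)+B(\Omega,r_1)=\lambda_1\varphi$, so the expansion becomes
\[0=\CYB(r_2)=\CYB(r_1)+\mu(\mu-\lambda_1)\varphi=\mu(\mu-\lambda_1)\varphi,\]
using $\CYB(r_1)=0$. Since $R$ is an integral domain, $\g^{\otimes3}$ is torsion-free over $R$ and $\varphi\neq0$, the scalar $\mu(\mu-\lambda_1)$ must vanish, whence $\mu=0$ or $\mu=\lambda_1$. I note that only $\CYB(r_1)=\CYB(r_2)=0$ and the value $\lambda_1$ enter the argument; the hypothesis $\lambda_i\in R^*$ is not actually needed for this conclusion.
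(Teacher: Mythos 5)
Your proposal is correct and follows essentially the same route as the paper's (sketched) proof: substitute $r_2=r_1-\mu\Omega$ into $\CYB(r_2)=0$, expand using $\CYB(r_1)=0$ and the symmetry condition on $r_1$, reduce to $\mu(\mu-\lambda_1)$ times a fixed nonzero tensor, and conclude from freeness over $R$ and the integral domain hypothesis. Your $\varphi=\CYB(\Omega)$ coincides with the paper's $[\Omega_{12},\Omega_{13}]$ (the other two brackets cancel by invariance of $\Omega$), and your cross-term identity $B(r,\Omega)+B(\Omega,r)=\lambda\varphi$, proved via the symmetric/antisymmetric splitting, is exactly the simplification the paper performs using $\kappa(r_1)=\lambda_1\Omega-r_1$; your observation that $\lambda_i\in R^*$ is not needed is also accurate.
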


The result and its proof over fields have been communicated to us by A.\ Stolin. The proof over integral domains is almost identical, namely by inserting $r_2=r_1-\mu\Omega$ into the equation $\CYB(r_2)=0$,
and simplifying the expression using the fact that $\CYB(r_1)=0$ and $\kappa(r_1)=\lambda_1\Omega-r_1$. Doing so, one sees that the equation is equivalent to $\mu(\mu-\lambda_1)[\Omega_{12},\Omega_{13}]=0$, where the subscripts are as in the definition of the 
classical Yang--Baxter operator. Now $[\Omega_{12},\Omega_{13}]$ is defined over $\bbQ$ and non-zero in $\g_0^{\otimes3}$, whence it is free over $R$. Thus $\mu(\mu-\lambda_1)=0$ and we conclude with the assumption on $R$.

\begin{Lma}\label{Iso} Assume that $R$ is an integral domain, let $r$ be an $r$-matrix on $\g$ with $r+\kappa(r)=\lambda\Omega$, $\lambda\in R^*$, and let $\alpha, \beta\in R^*$. If $(\g,\partial\alpha r)\simeq(\g,\partial\beta r)$, then $\beta=\pm\alpha$.  
\end{Lma}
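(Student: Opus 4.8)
The plan is to reduce the statement to Lemma \ref{Lelim} by applying that lemma to a rescaled $r$-matrix. First I note that for any $\gamma\in R^*$ the element $\gamma r$ is again an $r$-matrix: since $\CYB$ is homogeneous of degree two one has $\CYB(\gamma r)=\gamma^2\CYB(r)=0$, and $\gamma r+\kappa(\gamma r)=\gamma\lambda\Omega$ with $\gamma\lambda\in R^*$. In particular $\partial(\alpha r)$ and $\partial(\beta r)$ are genuine coboundary structures, so the isomorphism in the hypothesis makes sense.

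Next I would unwind the isomorphism. Let $\phi\colon(\g,\partial(\alpha r))\to(\g,\partial(\beta r))$ be an isomorphism of Lie bialgebras. Being an isomorphism of the underlying Lie algebras, $\phi\in\Aut(\g)$ and is in particular surjective, so Lemma \ref{Lsurj} applies and gives $(\phi\otimes\phi)(\alpha r)-\beta r=\nu\Omega$ for some $\nu\in R$. Setting $r'=(\phi\otimes\phi)(r)$ and dividing by $\alpha\in R^*$, this becomes $r'=cr+d\Omega$ with $c=\beta/\alpha\in R^*$ and $d=\nu/\alpha\in R$. Since $\phi$ preserves $\CYB$ and commutes with $\kappa$ and fixes $\Omega$ by Remark \ref{Omega}, the element $r'$ is again an $r$-matrix with $r'+\kappa(r')=\lambda\Omega$.

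Comparing the two expressions for $r'+\kappa(r')$ yields $(c\lambda+2d)\Omega=\lambda\Omega$; since $2\in R^*$ and $\lambda\Omega=0$ forces $\lambda=0$ by Remark \ref{Omega}, this gives $d=\tfrac{1}{2}(1-c)\lambda$. The decisive step is to apply Lemma \ref{Lelim} not to $r$ but to the pair $r_1=cr$ and $r_2=r'$: both are $r$-matrices, $\lambda_1=c\lambda\in R^*$, and $r_2=r_1-\mu\Omega$ with $\mu=-d$. The lemma then forces $\mu=0$ or $\mu=c\lambda$. In the first case $d=0$, hence $(1-c)\lambda=0$ and $c=1$; in the second case $-d=c\lambda$, which together with $d=\tfrac{1}{2}(1-c)\lambda$ gives $(1+c)\lambda=0$ and $c=-1$. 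Either way $c=\beta/\alpha=\pm1$, that is $\beta=\pm\alpha$.

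The main obstacle, I expect, is to see that Lemma \ref{Lelim} should be invoked for the rescaled $r$-matrix $cr$ rather than for $r$ itself; this avoids expanding $\CYB(cr+d\Omega)=0$ directly, which would merely recreate the invariance computation for $[\Omega_{12},\Omega_{13}]$ already underlying Lemma \ref{Lelim}. The assumptions that $R$ is an integral domain of characteristic zero enter exactly through the use of $\lambda,c\lambda\in R^*$ together with the freeness of $\Omega$, and through the invertibility of $2$.
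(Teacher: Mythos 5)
Your proof is correct and follows essentially the same route as the paper's: apply Lemma \ref{Lsurj} to get $(\phi\otimes\phi)(\alpha r)=\beta r+\nu\Omega$, compare anti-symmetrizations via Remark \ref{Omega} to obtain one linear relation, and invoke Lemma \ref{Lelim} to restrict the discrepancy to two values. The only difference is cosmetic: you normalize by $\alpha^{-1}$ and apply Lemma \ref{Lelim} to the pair $(cr,(\phi\otimes\phi)(r))$, whereas the paper applies it directly to $((\phi\otimes\phi)(\alpha r),\beta r)$ with $\lambda_1=\alpha\lambda$; both yield $\beta=\pm\alpha$.
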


\begin{proof} Assume that $\phi:(\g,\partial\alpha r)\simeq(\g,\partial\beta r)$ is an automorphism. By Lemma \ref{Lsurj}, $(\phi\otimes\phi)(\alpha r)=\beta r+\mu\Omega$ for some $\mu\in R$. Analogously to the proof of Lemma \ref{Laut}, we get
\[(\phi\otimes\phi)(\alpha r+\kappa(\alpha r))=(\beta\lambda+2\mu)\Omega,\]
while the left hand side equals $(\phi\otimes\phi)(\alpha\lambda\Omega)=\alpha\lambda\Omega$. Thus Remark \ref{Omega} gives $\beta\lambda=\alpha\lambda-2\mu$. On the other hand, $r'=(\phi\otimes\phi)(\alpha r)$ is an $r$-matrix with
$r'+\kappa(r')=\alpha\lambda\Omega$. Thus Lemma \ref{Lelim} implies that $\mu=0$ or $\mu=\alpha\lambda$. Inserting these cases into $\beta\lambda=\alpha\lambda-2\mu$ gives $\beta=\pm\alpha$, since $\lambda$ is invertible.
\end{proof}

Finally, for later use, we recall the split exact sequence
\begin{equation}\label{exact}
\xymatrix{
\mathbf{1}\ar[r]&\bG\ar[r]^-{\Ad}&\bAut(\g)\ar[r]^-{f}&\bA_{\Gamma}\ar[r]&\mathbf{1}}\end{equation}
of affine group schemes, where $\Ad$ denotes the adjoint representation and $\bA_{\Gamma}$ is the constant group scheme corresponding to the finite abstract group 
$\Aut(\Gamma)$.

\subsection{Belavin--Drinfeld Structures}
Let $\bE$ be a pinning of $\g$. By an \emph{admissible quadruple} we mean a quadruple $Q=(\Gamma_1,\Gamma_2,\tau,r_\h)$, where $(\Gamma_1,\Gamma_2,\tau)$ is an admissible
triple in the sense of Stolin, and $r_\h\in\h\otimes\h$ satisfies $r_\h+\kappa(r_\h)=\Omega_\h$ and
\[\forall\alpha\in\Gamma_1: (\tau(\alpha)\otimes 1+1\otimes\alpha)(r_\h)=0.\]
Recall that $(\Gamma_1,\Gamma_2,\tau)$ being admissible means that $\Gamma_1,\Gamma_2\subset \Gamma$ and $\tau:\Gamma_1\to\Gamma_2$ is an isometry such that for every $\alpha\in\Gamma_1$ there
exists a positive integer $k$ such that $\tau^k(\alpha)\notin\Gamma_1$.
Associated to these data is the \emph{Belavin--Drinfeld} $r$-matrix
 \[r_\bdr=r_\bdr(\bE,Q)=r_\h+\sum_{\alpha\in\Delta^+} X_\alpha\otimes X_{-\alpha}+\sum_{\substack{\alpha\in\mathrm{Span}_\bbZ(\Gamma_1)^+\\ k>0}}X_\alpha\otimes X_{-\kappa^k(\alpha)}.\]

\begin{Rk} The requirement on $r_\h$ above implies that $r_\bdr$ is defined over $R$.
\end{Rk}

We denote by $\bA_\bdr$ the automorphism group of $\partial r_\bdr$ and by $\bC_\bdr\subseteq \bH$ the centralizer of $r_\bdr$; since the action of $\bH$ on 
$\g\otimes\g$ is linear we have
\[\bC_\bdr(S)=\{h\in\bH(S)|\Ad_h\otimes\Ad_h(r_\bdr)=r_\bdr\}\]
for any $R$-ring $S$, which, since the induced action of $\bH$ on $\h\otimes\h$ is trivial, equals
\[\{h\in\bH(S)|\Ad_h\otimes\Ad_h(r_\bdr')=r_\bdr'\},\]
where $r_\bdr'=r_\bdr-r_\h$ is defined over $\bbQ$. Thus $\bC_\bdr$ is obtained from an affine $\bbQ$-group scheme by base change. Since any $\bbQ$-group scheme is smooth as $\bbQ$ is a field of characteristic zero, 
and since smoothness is preserved by base change, this proves the following.

\begin{Lma} The group $\bC_\bdr$ is smooth.
\end{Lma}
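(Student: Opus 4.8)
The plan is to exhibit $\bC_\bdr$ as the base change to $R$ of an affine group scheme defined over $\bbQ$, and then to invoke the fact that group schemes of finite type over a field of characteristic zero are automatically smooth. The starting point, already recorded in the discussion above, is that the torus $\bH$ acts trivially on $\h\otimes\h$, so that the Cartan part $r_\h$ of $r_\bdr$ plays no role in the centralizer condition: for every $R$-ring $S$ one has
\[\bC_\bdr(S)=\{h\in\bH(S)\mid (\Ad_h\otimes\Ad_h)(r_\bdr')=r_\bdr'\},\]
where $r_\bdr'=r_\bdr-r_\h$ lies in the $\bbQ$-span of the Chevalley generators and is therefore defined over $\bbQ$.

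Next I would descend the whole situation to $\bbQ$. Since $\bG$, and hence $\bH$, may be assumed defined over $\bbQ$, we may write $\bH=\bH_0\otimes_\bbQ R$ for a split $\bbQ$-torus $\bH_0$, with $\g=\g_0\otimes_\bbQ R$ as above. The adjoint action of $\bH_0$ on $\g_0\otimes_\bbQ\g_0$ is $\bbQ$-linear, and $r_\bdr'$ is a $\bbQ$-point of $\g_0\otimes_\bbQ\g_0$; consequently the condition $(\Ad_h\otimes\Ad_h)(r_\bdr')=r_\bdr'$ is cut out inside $\bH_0$ by polynomial equations with $\bbQ$-coefficients. These equations define an affine $\bbQ$-group scheme $\bC_0\subseteq\bH_0$, which is of finite type as a closed subscheme of the torus $\bH_0$, and because the defining ideal is obtained by base change one has $\bC_\bdr=\bC_0\otimes_\bbQ R$.

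It then remains only to conclude smoothness. By Cartier's theorem every affine group scheme of finite type over a field of characteristic zero is reduced, hence smooth; thus $\bC_0$ is smooth over $\bbQ$. Since smoothness is stable under arbitrary base change, $\bC_\bdr=\bC_0\otimes_\bbQ R$ is smooth over $R$, as desired.

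The one point that requires care — and the only genuine obstacle — is the passage from an equality of functors of points to an equality of schemes in the descent step: one must check that the centralizer is genuinely representable by the base change of the $\bbQ$-scheme $\bC_0$, rather than merely having the same $S$-points for each $S$. This is guaranteed by the linearity of the $\bH$-action together with the fact that $r_\bdr'$ is defined over $\bbQ$, so that the scheme-theoretic stabilizer is computed by the same ideal before and after base change; everything else is formal.
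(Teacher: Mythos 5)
Your argument is correct and is essentially the paper's own: the Cartan part $r_\h$ drops out of the centralizer condition because $\bH$ acts trivially on $\h\otimes\h$, so $\bC_\bdr$ is the base change to $R$ of a $\bbQ$-group scheme, which is smooth in characteristic zero, and smoothness is preserved by base change. Your added care about representability commuting with base change and the explicit appeal to Cartier's theorem only make precise what the paper states more briefly.
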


Given an admissible quadruple $Q =(\Gamma_1,\Gamma_2,\tau,r_\h)$, we denote by
$\bA_\Gamma^Q$ the closed subgroup of $\bA_\Gamma$ defined by the equations $\pi(\Gamma_1)=\Gamma_1$, $\pi\tau=\tau\pi$, and $\widehat\pi(r_\h)=r_\h$. 

\begin{Thm}\label{Taut} Let $\bG$ be a split simple adjoint $R$-group with $\g=\Lie(\bG)$. Let $\bE=(\bH,\Gamma,(X_{\alpha})_{\alpha\in\Gamma})$ be a pinning of $\bG$. Fix an admissible
quadruple $Q=(\Gamma_1,\Gamma_2,\tau,r_\h)$ and consider $r_\bdr=r_\bdr(\bE,Q)$. 
Then the sequence \eqref{exact} induces a split exact sequence
\[\xymatrix{
\mathbf{1}\ar[r]&\bC_\bdr\ar[r]^-{\Ad}&\bA_\bdr\ar[r]^-{f}&\bA_{\Gamma}^Q\ar[r]&\mathbf{1}}\]
of affine group schemes.
\end{Thm}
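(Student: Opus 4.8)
The plan is to realize the asserted sequence as the restriction of \eqref{exact} to the stabilizer of $r_\bdr$. By Lemma \ref{Laut}, for every $R$-ring $S$ one has $\bA_\bdr(S)=\{\phi\in\bAut(\g)(S)\mid(\phi\otimes\phi)(r_\bdr)=r_\bdr\}$, so $\bA_\bdr$ is precisely the scheme-theoretic stabilizer of $r_\bdr$ in $\bAut(\g)$ for the action $\phi\mapsto\phi\otimes\phi$. I would then check exactness of $\mathbf{1}\to\bC_\bdr\xrightarrow{\Ad}\bA_\bdr\xrightarrow{f}\bA_\Gamma^Q\to\mathbf{1}$ at each of its three terms and exhibit a splitting, working throughout on $S$-points. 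As in Remark \ref{Omega}, the element $r_\bdr'=r_\bdr-r_\h$ and the group $\bC_\bdr$ are defined over $\bbQ$ and obtained from $R$ by base change, while $\bA_\Gamma$ is constant; this lets me reduce the structural inputs below to the classical situation over a field and transport them by flatness of $R$ over $\bbQ$.

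For left exactness, since $\bG$ is adjoint the map $\Ad$ is a closed immersion, hence injective on the closed subgroup $\bC_\bdr\subseteq\bH\subseteq\bG$. The kernel of $f$ restricted to $\bA_\bdr$ is $\bA_\bdr\cap\Ad(\bG)=\Ad(\mathbf{Z})$, where $\mathbf{Z}$ denotes the centralizer of $r_\bdr$ in $\bG$. Thus exactness at $\bA_\bdr$ amounts to the equality $\mathbf{Z}=\bC_\bdr$, i.e.\ to the assertion that every element of $\bG$ fixing $r_\bdr$ already lies in $\bH$. This is the classical Belavin--Drinfeld fact that the stabilizer of a Belavin--Drinfeld $r$-matrix in the adjoint group is contained in the maximal torus; because $\mathbf{Z}$ and $\bC_\bdr$ depend only on $r_\bdr'$ and are defined over $\bbQ$, it suffices to verify the containment after base change to an algebraically closed field, where this fact applies.

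To obtain surjectivity together with a splitting, I would use the section $s$ of \eqref{exact} furnished by the pinning, sending $\pi\in\Aut(\Gamma)$ to the diagram automorphism $\widehat\pi$ that permutes the Chevalley generators via $\widehat\pi(X_\alpha)=X_{\pi\alpha}$. For $\pi\in\bA_\Gamma^Q$ I would compute $(\widehat\pi\otimes\widehat\pi)(r_\bdr)$ term by term: the sum $\sum_{\alpha\in\Delta^+}X_\alpha\otimes X_{-\alpha}$ is preserved because $\pi$ permutes $\Delta^+$; the twisting sum is preserved because $\pi(\Gamma_1)=\Gamma_1$ and $\pi\tau=\tau\pi$ (so that $\pi$ permutes the index set and $\pi\tau^k=\tau^k\pi$); and the Cartan part is preserved precisely by the condition $\widehat\pi(r_\h)=r_\h$. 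Hence $\widehat\pi\in\bA_\bdr$, so $s$ restricts to a homomorphism $\bA_\Gamma^Q\to\bA_\bdr$ splitting $f$; in particular $f$ maps $\bA_\bdr$ onto $\bA_\Gamma^Q$.

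It remains to see that $f(\bA_\bdr)\subseteq\bA_\Gamma^Q$. Given $\phi\in\bA_\bdr$ with image $\pi=f(\phi)$, write $\phi=\Ad_g\circ\widehat\pi$ with $g\in\bG$. Then $r_\bdr=(\phi\otimes\phi)(r_\bdr)=(\Ad_g\otimes\Ad_g)\bigl((\widehat\pi\otimes\widehat\pi)(r_\bdr)\bigr)$, so $r_\bdr$ is $\Ad(\bG)$-conjugate to $(\widehat\pi\otimes\widehat\pi)(r_\bdr)$, which is itself the Belavin--Drinfeld $r$-matrix attached to the transformed quadruple $(\pi\Gamma_1,\pi\Gamma_2,\pi\tau\pi^{-1},\widehat\pi(r_\h))$. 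The Belavin--Drinfeld classification of $r$-matrices up to conjugacy then forces the two quadruples to coincide, giving $\pi(\Gamma_1)=\Gamma_1$, $\pi\tau=\tau\pi$ and $\widehat\pi(r_\h)=r_\h$, i.e.\ $\pi\in\bA_\Gamma^Q$. I expect this last step to be the main obstacle: it rests on the Belavin--Drinfeld rigidity results (as does the centralizer computation above), and since $r_\h$ lives over $R$ rather than over $\bbQ$, extracting the condition $\widehat\pi(r_\h)=r_\h$ over the ring $R$ requires care, to be handled by reducing to a field and exploiting the $\bbQ$-structure together with flatness as in Remark \ref{Omega}.
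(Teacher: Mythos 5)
Your skeleton --- restrict \eqref{exact} to the stabilizer of $r_\bdr$, verify that the section $\pi\mapsto\widehat\pi$ lands in $\bA_\bdr$ for $\pi\in\bA_\Gamma^Q$ by the term-by-term computation, and reduce the remaining identities to an algebraically closed field --- is essentially the paper's, and the splitting computation is correct. But two steps are genuinely incomplete. First, your reduction to a field rests on the claim that the centralizer $\mathbf{Z}$ of $r_\bdr$ in $\bG$ ``depends only on $r_\bdr'=r_\bdr-r_\h$ and is defined over $\bbQ$.'' That is only true for the part of $\mathbf{Z}$ lying in $\bH$, where the action on $\h\otimes\h$ is trivial; a general $g\in\bG$ does not fix $\h\otimes\h$, and $r_\h$ lives over $R$, so $\mathbf{Z}$ has no evident $\bbQ$-structure before you already know $\mathbf{Z}\subseteq\bH$. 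The paper reduces to fields differently: it applies the fiberwise-isomorphism criterion of \cite{EGAIV} to the single map $\Ad\times s:\bC_\bdr\cdot\bA_\Gamma^Q\to\bA_\bdr$, which is legitimate because the \emph{source} is flat ($\bC_\bdr$ is smooth, being a base change of a $\bbQ$-group since it only sees $r_\bdr'$, and $\bA_\Gamma^Q$ is a closed subgroup of a finite constant group); no flatness or $\bbQ$-rationality of $\bA_\bdr$ or $\mathbf{Z}$ is required. Your descent step can be repaired along those lines, but as written it does not go through.

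Second, and more seriously, both your exactness at $\bA_\bdr$ (i.e.\ $\mathbf{Z}\subseteq\bH$) and the inclusion $f(\bA_\bdr)\subseteq\bA_\Gamma^Q$ are delegated to ``the Belavin--Drinfeld classification of $r$-matrices up to conjugacy.'' The statement you need is finer than what that classification provides: you need that conjugacy under the adjoint group alone, \emph{with the pinning fixed}, determines the quadruple on the nose --- including the continuous parameter $r_\h$ --- and that the stabilizer of $r_\bdr$ in $\bG$ lies in the torus. The Belavin--Drinfeld theorem is a classification up to equivalence (allowing scalars and all of $\Aut(\g)$) and does not by itself exclude, say, that $(\widehat\pi\otimes\widehat\pi)(r_\bdr)$ is $\bG$-conjugate to $r_\bdr$ for some $\pi$ moving the quadruple. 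This is precisely the content of Lemma \ref{LKKPS} in the paper: one first shows $g\in\bH(\Kb)$ using the operator $\Xi(r_\bdr)$ and the fact (from \cite{KKPS}) that the Borel subalgebras are the normalizers of the eigenspaces of its semisimple part, and then extracts $\pi(\Gamma_1)=\Gamma_1$, $\widehat\pi(r_\h)=r_\h$, and --- via a non-obvious induction on the escape index $l_\alpha$ --- the commutation $\pi\tau=\tau\pi$ from the equality of the two nilpotent sums. None of this is a formal consequence of the classification theorem, so your proposal has a genuine gap at its central step.
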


\begin{proof} By \cite[XXXIII.5.5]{SGA3}, there is a (unique) splitting $s:\bA_\Gamma\to\bAut(\bG), \pi\mapsto\widehat\pi$ leaving invariant each of $\bH$ and $\bE$.
Via this splitting, $\bA_\Gamma$ acts on $\bH$ in such a way that for any $R$-ring $S$ and any $h\in\bH(S)$ and $\pi\in\bA_\Gamma(S)$, 
\[\Ad_{\pi\cdot h}=\widehat\pi\Ad_h\widehat\pi^{-1}.\]
If $\pi\in\bA_\Gamma^Q(S)$, then by definition of $\bA_\Gamma^Q$, $\widehat\pi\in\bA_\bdr$, whence for any $h\in\bC_\bdr(S)$ we have $\pi\cdot h\in\bC_\bdr(S)$. 
Hence the action induces an action of $\bA_\Gamma^Q$ on $\bC_\bdr$ and we may form 
the semi-direct product $\bC_\bdr\cdot\bA_\Gamma^Q$ with respect to this action, and we have a group morphism
\[\Ad\times s:\bC_\bdr\cdot\bA_\Gamma^Q\to \bA_\bdr.\]
To prove the theorem it suffices to show that $\Ad\times s$ is an isomorphism. The group $\bC_\bdr$ is smooth (hence flat), and $\bA_\Gamma^Q$ is a closed subgroup of a finite constant
group, hence flat as well. Thus their semi-direct product is flat, and the fiber-wise isomorphism criterion \cite[$_4$17.9.5]{EGAIV} 
reduces the problem to the case where $R=K$ is a 
field of characteristic zero. In that case all group schemes (being of finite type) are smooth, and it suffices, by \cite[22.5]{KMRT}, to show that $(\Ad\times s)(\Kb):\bC_\bdr(\Kb)\cdot\bA_\Gamma^Q(\Kb)\to\bA_\bdr(\Kb)$ 
is an isomorphism of abstract groups. This latter statement holds by the following lemma, which completes the proof.
\end{proof}

\begin{Lma}\label{LKKPS} Let $K$ be an algebraically closed field of characteristic zero. The map $\Ad\times s$ defines an isomorphism of groups $\bC_\bdr(K)\cdot\bA_\Gamma^Q(K)\to\bA_\bdr(K)$. 
\end{Lma}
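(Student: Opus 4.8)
The plan is to analyze the map $\Ad\times s$ on $K$-points by separately establishing injectivity and surjectivity, exploiting the exact sequence \eqref{exact} together with Lemmas \ref{Laut} and \ref{LKKPS}'s hypotheses. First I would set up the comparison with the ambient sequence \eqref{exact}: restricting $f$ to $\bA_\bdr(K)$ lands in $\bA_\Gamma(K)=\Aut(\Gamma)$, and I claim its image lies in $\bA_\Gamma^Q(K)$. Indeed, any $\phi\in\bA_\bdr(K)$ fixes $r_\bdr$ (Lemma \ref{Laut}), and after composing with $\Ad_h^{-1}$ for a suitable $h\in\bH(K)$ one may assume $\phi=\widehat\pi$ preserves the pinning; the condition $(\widehat\pi\otimes\widehat\pi)(r_\bdr)=r_\bdr$ then forces $\pi$ to preserve $\Gamma_1$, commute with $\tau$, and fix $r_\h$, which are exactly the defining equations of $\bA_\Gamma^Q$. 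This shows $f$ maps $\bA_\bdr(K)$ into $\bA_\Gamma^Q(K)$ and that $\Ad\times s$ is a section-compatible morphism of short exact sequences.

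Next I would prove injectivity. An element of the kernel of $\Ad\times s$ is a pair $(h,\pi)$ with $\Ad_h\cdot\widehat\pi=\Id$ in $\bA_\bdr(K)$. Applying $f$ and using that $f\circ\Ad$ is trivial gives $\pi=1$, so $\Ad_h=\Id$; since $\bG$ is adjoint, $\Ad$ is injective on $\bH(K)$, forcing $h=1$. Thus $\Ad\times s$ is injective regardless of the semidirect structure.

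For surjectivity, take any $\phi\in\bA_\bdr(K)$. By the exactness of \eqref{exact} and the image computation above, $f(\phi)=\pi\in\bA_\Gamma^Q(K)$, so $\phi':=\phi\cdot\widehat\pi^{-1}$ lies in $\ker f=\Ad(\bG(K))$, say $\phi'=\Ad_g$ with $g\in\bG(K)$. It remains to show $g$ can be taken in $\bH(K)$, i.e.\ that $\Ad_g$ fixing $r_\bdr$ forces $g$ into the centralizer $\bC_\bdr(K)\subseteq\bH(K)$. This is where the bulk of the genuine Belavin--Drinfeld combinatorics enters: one must show that an \emph{inner} automorphism $\Ad_g$ preserving $\partial r_\bdr$ necessarily normalizes $\bH$ and in fact centralizes $r_\bdr$ inside $\bH$. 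I expect this to be the main obstacle. The standard approach is to use the explicit root-space decomposition of $r_\bdr$ and the fact that $\Ad_g$ fixing $r_\bdr$ (Lemma \ref{Laut}) constrains $g$: the Cartan part $r_\h$ and the principal term $\sum_{\alpha\in\Delta^+}X_\alpha\otimes X_{-\alpha}$ (which is the Casimir-type skew part) pin down a maximal torus, forcing $\Ad_g$ to preserve $\h$, hence $g\in N_{\bG}(\bH)(K)$; the Weyl-group component is then killed by the requirement that $g$ act trivially on $\Gamma$ (as $f(\Ad_g)=1$), placing $g\in\bH(K)$, and finally $\Ad_g$ fixing $r_\bdr$ exactly says $g\in\bC_\bdr(K)$.

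Concretely, I would argue the torus-normalization step by examining how $\Ad_g\otimes\Ad_g$ acts on the weight decomposition of $r_\bdr$ under $\bH$: the summand $\sum_\alpha X_\alpha\otimes X_{-\alpha}+\sum X_\alpha\otimes X_{-\tau^k(\alpha)}$ carries the full $\bH$-weight information, and invariance under $\Ad_g\otimes\Ad_g$ together with $\SL$-type rank considerations forces $g$ to normalize $\bH$; this is precisely the content that makes \cite{BD} and the split computation in \cite{KPS} work, and I would cite or adapt that argument rather than reprove it. Once $g\in\bC_\bdr(K)$ is secured, $\phi=\Ad_g\cdot\widehat\pi=(\Ad\times s)(g,\pi)$ exhibits $\phi$ in the image, completing surjectivity and hence the isomorphism.
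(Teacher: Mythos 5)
Your overall architecture coincides with the paper's: injectivity is dispatched immediately (the paper simply notes that $\Ad\times s$ is the restriction of an injective homomorphism), and surjectivity is split into (i) showing that the inner part $g$ of a decomposition $\phi=\widehat\pi\Ad_g$ lies in $\bH(K)$, and (ii) showing that $\pi\in\bA_\Gamma^Q(K)$, after which $g\in\bC_\bdr(K)$ follows. For (i) you defer to the Belavin--Drinfeld combinatorics in \cite{BD} and \cite{KPS}; the paper does essentially the same, passing through the Killing-form isomorphism $\Xi:\g\otimes\g\to\End_K(\g)$, observing that $\phi$ must commute with the semisimple part of $\Xi(r_\bdr)$, whose $0$- and $1$-eigenspaces have the two Borel subalgebras as normalizers, and then invoking the end of the proof of Theorem 1 of \cite{KKPS}. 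Treating that step as a citation is consistent with what the paper itself does.

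The genuine gap is in step (ii). You assert that invariance of $r_\bdr$ ``forces $\pi$ to preserve $\Gamma_1$, commute with $\tau$, and fix $r_\h$'' as though this were a direct read-off of the defining equations of $\bA_\Gamma^Q$. It is not. Writing out $(\widehat\pi\Ad_g\otimes\widehat\pi\Ad_g)(r_\bdr)=r_\bdr$ with $g\in\bH(K)$ and comparing the nilpotent tails, what one actually obtains for each $\alpha\in\Gamma_1$ is only an equality of \emph{sets}
\[\{\pi\tau(\alpha),\ldots,\pi\tau^{l_\alpha}(\alpha)\}=\{\tau\pi(\alpha),\ldots,\tau^{l_\alpha}\pi(\alpha)\},\]
together with conditions on the scalars $\lambda_{\alpha,k}(g)=\alpha(g)(\tau^k(\alpha)(g))^{-1}$. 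Extracting $\pi\tau=\tau\pi$ from this set equality is the real content of the lemma; the paper does it by induction on the escape length $l_\alpha$, using admissibility (no root is $\tau$-periodic) to exclude the alternative $\tau\pi(\alpha)=\tau\pi\tau^j(\alpha)$ with $j>0$, and the same comparison is what forces all $\lambda_{\alpha,k}(g)=1$. Your write-up also has a mild circularity: in your first paragraph you reduce to $\phi=\widehat\pi$ by composing with $\Ad_h^{-1}$ for $h\in\bH(K)$, but knowing that the inner part of $\phi$ lies in $\bH$ is precisely the surjectivity content you only establish later. Finally, the conclusion $g\in\bC_\bdr(K)$ is available only \emph{after} (ii): once $\widehat\pi$ is known to fix $r_\bdr$, so does $\Ad_g=\widehat\pi^{-1}\phi$, and only then does ``$\Ad_g$ fixes $r_\bdr$, hence $g\in\bC_\bdr(K)$'' make sense.
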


\begin{proof} The map in question being the restriction of an injective homomorphism, it only remains to be shown that it is surjective. Let $\phi\in\bA_\bdr(K)$. Then $\phi=\widehat\pi\Ad_g$
for some $g\in\bG(K)$ and $\pi\in\bA_\Gamma(K)$. As a first step we will show that necessarily, $g\in\bH(K)$, building on an argument from \cite{KKPS}. Indeed, consider the isomorphism
\[\Xi:\g\otimes\g\to\End_K(g)\]
defined by sending $a\otimes b$ to the linear map $u\mapsto \langle a,u\rangle b$, where $\langle,\rangle$ is the Killing form on $\g$. Now $\Xi((\phi\otimes\phi)(r_\bdr))=\phi\Xi(r_\bdr)\phi^{-1}$, whence
$\phi$ is an automorphism of $\partial r_\bdr$ if and only if $\phi$ commutes with $\Xi(r_\bdr)$, which holds if and only if $\phi$ commutes with its semisimple and nilpotent parts.
Denote by $D$ the semisimple part of $\Xi(r_\bdr)$. It is shown in \cite[Proof of Theorem 1]{KKPS} that the Borel subalgebra $\bo^+$ (resp.\ $\bo^-$) is the normalizer of the eigenspace of $D$ corresponding to the eigenvalue 0 (resp.\ 1).
Thus if $\phi\in\bA_\bdr$, then $\phi$ must preserve $\bo^+$ and $\bo^-$. Since this is true for $\widehat\pi$, it follows that $\Ad_g$ must preserve $\bo^+$ and $\bo^-$. The end of the proof of Theorem 1 of \cite{KKPS} now
applies to yield that this implies that $g\in\bH(K)$.

Next we will show that $\pi\in \bA_\Gamma^Q$. Once this is done, it follows from $(\phi\otimes\phi)(r_\bdr)=r_\bdr$ that $\Ad_g\otimes\Ad_g(r_\bdr)=r_\bdr$, i.e.\ that $g\in\bC_\bdr(K)$, and the proof becomes complete.
Now, $(\Ad_g\otimes\Ad_g)(r_\h)=r_\h$, while $\Ad_g(X_\alpha)=\alpha(g)X_\alpha$ and $\Ad_g(X_{-\alpha})=\alpha(g)^{-1}X_\alpha$. Thus $(\phi\otimes\phi)$ maps $r_\bdr$ to
 \[(\widehat\pi\otimes\widehat\pi)(r_\h)+\sum_{\alpha\in\Delta^+} X_\alpha\otimes X_{-\alpha}+\sum_{\substack{\alpha\in\mathrm{Span}_\bbZ(\Gamma_1)^+\\ k>0}}\lambda_{\alpha,k}(g) X_{\pi(\alpha)}\wedge X_{-\pi\tau^k(\alpha)},\]
where $\lambda_{\alpha,k}(g)=\alpha(g)(\tau^k(\alpha)(g))^{-1}\in K^*$. Thus $(\widehat\pi\otimes\widehat\pi)(r_\h)=r_\h$, and
 \[\sum_{\substack{\alpha\in\mathrm{Span}_\bbZ(\Gamma_1)^+\\ k>0}}{\lambda_{\alpha,k}(g) X_{\pi(\alpha)}\wedge X_{-\pi\tau^k(\alpha)}=\sum_{\substack{\beta\in\mathrm{Span}_\bbZ(\Gamma_1)^+\\ l>0}}X_{\beta}\wedge X_{-\tau^l(\beta)}}.
 \]
Now $\pi$ and $\tau$ map positive roots to positive roots, and the set $\{X_\alpha\}_{\alpha\in\Delta}$ is linearly independent in $\g$. Thus for the above equality to hold, $\pi$ must preserve $\mathrm{Span}_\bbZ(\Gamma_1)^+$ in $\Delta$ and thus it preserves $\mathrm{Span}_\bbZ(\Gamma_1)^+\cap\Gamma=\Gamma_1$. Thus for any $\alpha\in\mathrm{Span}_\bbZ(\Gamma)$, the sum of all terms with first component $X_{\pi(\alpha)}$ in the left-hand sum equals the sum of all terms with first component $X_\beta$ 
in the right-hand sum, for $\beta=\pi(\alpha)$, whence
 \[\sum_{k>0}{\lambda_{\alpha,k}(g) X_{-\pi\tau^k(\alpha)}=\sum_{l>0}}X_{-\tau^l\pi(\alpha)}\]
 for all $\alpha\in\mathrm{Span}_\bbZ\Gamma_1^+$. We want to prove that this implies that $\pi\tau=\tau\pi$. For this it is enough to show that $\pi\tau(\alpha)=\tau\pi(\alpha)$
for all $\alpha\in\Gamma_1$. If $\Gamma_1=\emptyset$, there is nothing to prove. Assume thus that $\Gamma_1\neq\emptyset$. For any $\alpha\in\Gamma_1$, the definition of admissibility implies that there exists a unique integer $l_\alpha\geq 1$ such that $\tau^{l_\alpha}(\alpha)\notin\Gamma_1$
and $\tau^{l_\alpha-1}(\alpha)\in\Gamma_1$. Since $\pi(\Gamma_1)=\Gamma_1$, the above equality and the construction of a Chevalley basis 
implies that all $\lambda_{\alpha,k}=1$, that both sums have $l_\alpha$ terms, and that
\[\{\pi\tau(\alpha),\pi\tau^2(\alpha),\ldots,\pi\tau^{l_\alpha}(\alpha)\}=\{\tau\pi(\alpha),\tau^2\pi(\alpha),\ldots,\tau^{l_\alpha}(\alpha)\}.\]
Since $l_{\tau(\alpha)}=l_\alpha-1$, the set of all $l_\alpha$ is a segment of integers starting at 1. We may thus proceed by induction on $l_\alpha$. If $l_\alpha=1$, the above sets are singletons,
giving the base of the induction. For $l_\alpha>1$, by the induction hypothesis the left hand side is equal to
\[\{\pi\tau(\alpha),\tau\pi\tau(\alpha),\ldots,\tau\pi\tau^{l_\alpha-1}(\alpha)\}.\]
Thus for the above equality of sets to hold, either $\tau\pi(\alpha)=\pi\tau(\alpha)$, or $\tau\pi(\alpha)=\tau\pi\tau^j(\alpha)$ for some $j>0$. This second case is however
not possible, since by injectivity of $\tau\pi$ it implies $\alpha=\tau^j(\alpha)$ for some $j>0$, which violates admissibility. Thus $\pi\tau(\alpha)=\tau\pi(\alpha)$, and the result
follows by induction. This completes the proof.
\end{proof}

\subsection{Standard Structure}
Associated to a pinning $\bE$ is the \emph{Drinfeld--Jimbo} $r$-matrix
\[r_{\drj}=r_{\drj}(\bE)=\frac{1}{2}\Omega_\h+\sum_{\alpha\in\Delta^+} X_\alpha\otimes X_{-\alpha}.\]
In other words $r_\drj=r_\bdr(\bE,Q)$ for the trivial quadruple $Q=(\emptyset,\emptyset,\Id_\emptyset,\frac{1}{2}\Omega_\h)$.

\begin{Lma} If $\bE$ and $\bE'$ are pinnings of $\bG$, then the Lie bialgebras $(\g,\dr_{\drj}(\bE))$ and
$(\g,\dr_{\drj}(\bE'))$ are isomorphic.
\end{Lma}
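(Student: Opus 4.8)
The plan is to produce an $R$-linear automorphism $\phi$ of $\g$ carrying the pinning $\bE$ to $\bE'$, and then to show that any such $\phi$ is automatically an isomorphism of Lie bialgebras $(\g,\dr_{\drj}(\bE))\to(\g,\dr_{\drj}(\bE'))$. Since an automorphism is in particular a surjective endomorphism, Lemma \ref{Lsurj} reduces this to checking that $(\phi\otimes\phi)(r_{\drj}(\bE))-r_{\drj}(\bE')\in R\Omega$; in fact I expect to obtain the sharper equality $(\phi\otimes\phi)(r_{\drj}(\bE))=r_{\drj}(\bE')$, which makes $\phi$ an isomorphism whose inverse is one as well.

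First I would produce $\phi$. Over an algebraically closed field the adjoint group acts simply transitively on pinnings, the stabilizer of a pinning being trivial. Over the ring $R$ I would phrase this by saying that the functor of pinnings of $\bG$ is a torsor under $\bG$ itself (cf.\ \cite{SGA3}); since it carries the $R$-point $\bE$, this torsor is trivial, and hence $\bG(R)$ acts transitively on the pinnings of $\bG$ defined over $R$. Thus there is $g\in\bG(R)$ such that $\phi:=\Ad_g\in\Aut(\g)(R)$ carries $\bE$ to $\bE'$. I expect this to be the main obstacle, as it is the only place where the passage from the classical field case to an arbitrary base ring must be justified, whereas everything that follows is a direct computation.

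With $\phi$ at hand I would argue as follows. Carrying $\bE$ to $\bE'$, the automorphism $\phi$ maps the Chevalley system attached to $\bE$ onto that attached to $\bE'$, so that $(\phi\otimes\phi)\big(\sum_{\alpha\in\Delta^+}X_\alpha\otimes X_{-\alpha}\big)$ equals the corresponding sum $\sum_{\beta\in\Delta^+}X'_\beta\otimes X'_{-\beta}$ attached to $\bE'$, where the primed data belong to $\bE'$ and roots are matched through the identification induced by $\phi$. (As $\phi$ is inner it respects the labelling of the roots, and even a diagram-automorphism discrepancy would merely reindex $\Delta^+$ and leave this sum unchanged.) For the Cartan part, recall from Remark \ref{Omega} that $\phi$ fixes the Casimir, $(\phi\otimes\phi)(\Omega)=\Omega$. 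Decomposing $\Omega=\Omega_\h+\sum_{\alpha\in\Delta^+}(X_\alpha\otimes X_{-\alpha}+X_{-\alpha}\otimes X_\alpha)$ for $\bE$ and likewise for $\bE'$, and using that $\phi$ sends the root part of the former onto the root part of the latter, I obtain $(\phi\otimes\phi)(\Omega_\h)=\Omega_{\h'}$, hence $(\phi\otimes\phi)(\frac{1}{2}\Omega_\h)=\frac{1}{2}\Omega_{\h'}$. Adding the two contributions yields $(\phi\otimes\phi)(r_{\drj}(\bE))=r_{\drj}(\bE')$, and Lemma \ref{Lsurj} then shows that $\phi$ is the desired isomorphism of Lie bialgebras.
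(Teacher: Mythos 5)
Your proof is correct and follows essentially the same route as the paper: both produce an automorphism of $\g$ carrying $\bE$ to $\bE'$ (the paper cites \cite[XXIII.5.1]{SGA3} directly where you phrase the transitivity as triviality of the torsor of pinnings under the adjoint group $\bG$), and then observe that, since $\Omega_\h$ is determined by $\h$ and the invariant element $\Omega$, this automorphism sends $r_{\drj}(\bE)$ exactly to $r_{\drj}(\bE')$. Your appeal to Lemma \ref{Lsurj} at the end is harmless but not needed, since the exact equality $(\phi\otimes\phi)(r_{\drj}(\bE))=r_{\drj}(\bE')$ already yields the morphism condition \eqref{morphism} directly.
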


\begin{proof}
If $\bE$ and $\bE'$ are two pinnings of $\bG$, then there is an automorphism of $\bG$ mapping $\bE$ to $\bE'$ by \cite[XXIII.5.1]{SGA3}. As
$\Omega_\h$ is determined by $\h$ and $\Omega$, there is thus an automorphism $\phi:\g\to\g$ with $(\phi\otimes\phi)(r_{\drj}(\bE))=r_{\drj}(\bE')$, and hence $\phi$ is an isomorphism of Lie bialgebras.
\end{proof}

Following \cite{ES}, we use the term \emph{standard} or \emph{Drinfeld--Jimbo} for the Lie bialgebra structure $\dr_{\drj}(\bE)$ on $\g$.
Up to isomorphism, the above lemma shows that it is independent of the choice of $\bE$, and is thus determined by $\g$. Classifying twisted forms of
standard Lie bialgebras, i.e.\ Lie bialgebras $(\g',\delta')$ such that $(\g'_S, \delta'_S)\simeq(\g_S,(\dr_{\drj})_S)$ for a
split Lie algebra $\g$ with standard Lie bialgebra structure $\dr_{\drj}$ and a faithfully flat $R$-ring $S$ amounts, by Corollary \ref{Cbij}, to describing the set $H^1(S/R,\bA)$, where $\bA$ is the automorphism $R$-group scheme of
$(\g,\dr_{\drj})$. For this, Theorem \ref{Taut} has the following consequence.

\begin{Cor}\label{Caut} Let $\bG$ be a split simple adjoint $R$-group with $\g=\Lie(\bG)$, and let $\bE=(\bH,\Gamma,(X_{\alpha})_{\alpha\in\Gamma})$ be a pinning of $\bG$. 
Then the sequence \eqref{exact} induces a split exact sequence
\begin{equation}\label{exactdj}
\xymatrix{\mathbf{1}\ar[r]&\bH\ar[r]^-{\Ad}&\bA\ar[r]^-{f}&\bA_{\Gamma}\ar[r]&\mathbf{1}} 
\end{equation}
of affine group schemes.
\end{Cor}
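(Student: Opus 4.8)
The plan is to obtain this as the special case of Theorem \ref{Taut} attached to the trivial admissible quadruple $Q=(\emptyset,\emptyset,\Id_\emptyset,\tfrac12\Omega_\h)$, for which $r_\bdr(\bE,Q)=r_{\drj}$ and hence $\bA_\bdr=\bAut((\g,\dr_{\drj}))=\bA$. Granting this, it only remains to identify the two outer terms of the sequence of Theorem \ref{Taut} with $\bH$ and $\bA_\Gamma$: once $\bC_\bdr=\bH$ and $\bA_\Gamma^Q=\bA_\Gamma$ are established, the split exact sequence \eqref{exactdj} follows verbatim, with its splitting inherited from the one in Theorem \ref{Taut}.

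First I would check that $\bC_\bdr=\bH$. Since $\Gamma_1=\emptyset$, the last sum defining $r_\bdr$ is empty, so $r_\bdr'=r_\bdr-r_\h=\sum_{\alpha\in\Delta^+}X_\alpha\otimes X_{-\alpha}$. For any $R$-ring $S$ and any $h\in\bH(S)$ one has $\Ad_h(X_{\pm\alpha})=\alpha(h)^{\pm1}X_{\pm\alpha}$, whence $(\Ad_h\otimes\Ad_h)(X_\alpha\otimes X_{-\alpha})=\alpha(h)\alpha(h)^{-1}X_\alpha\otimes X_{-\alpha}=X_\alpha\otimes X_{-\alpha}$. Thus every $h\in\bH(S)$ fixes $r_\bdr'$, and by the description of $\bC_\bdr$ preceding Theorem \ref{Taut} this gives $\bC_\bdr=\bH$.

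The only point with genuine content is that $\bA_\Gamma^Q=\bA_\Gamma$; this is the step I expect to be the main obstacle. By definition $\bA_\Gamma^Q$ is cut out of $\bA_\Gamma$ by $\pi(\Gamma_1)=\Gamma_1$, $\pi\tau=\tau\pi$, and $\widehat\pi(r_\h)=r_\h$ (the last with $\widehat\pi$ acting diagonally on $\h\otimes\h$). The first two conditions are vacuous for $\Gamma_1=\emptyset$, so everything reduces to verifying $(\widehat\pi\otimes\widehat\pi)(\tfrac12\Omega_\h)=\tfrac12\Omega_\h$, equivalently $(\widehat\pi\otimes\widehat\pi)(\Omega_\h)=\Omega_\h$, for every $\pi\in\bA_\Gamma$. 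Here I would use that the splitting $s$ is chosen to leave $\bH$, hence $\h$, invariant; consequently $\widehat\pi$ permutes the summands of the decomposition $\g\otimes\g=\bigoplus_{\lambda,\mu}\g_\lambda\otimes\g_\mu$ (with $\g_0=\h$), sending $\g_\lambda\otimes\g_\mu$ into $\g_{\pi\lambda}\otimes\g_{\pi\mu}$. Writing $\Omega=\sum_{\lambda,\mu}\Omega_{\lambda\mu}$ accordingly, its $\h\otimes\h$-component is exactly $\Omega_\h=\Omega_{00}$. Since $\widehat\pi$ fixes $\Omega$ by Remark \ref{Omega} and maps $\h\otimes\h$ to itself (as $\pi\lambda=0$ iff $\lambda=0$), comparing $\h\otimes\h$-components in $(\widehat\pi\otimes\widehat\pi)(\Omega)=\Omega$ forces $(\widehat\pi\otimes\widehat\pi)(\Omega_\h)=\Omega_\h$. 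Hence the last defining condition of $\bA_\Gamma^Q$ holds automatically, so $\bA_\Gamma^Q=\bA_\Gamma$, and Theorem \ref{Taut} now yields \eqref{exactdj}.
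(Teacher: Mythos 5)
Your proposal is correct and follows the same route as the paper: specialize Theorem \ref{Taut} to the trivial quadruple and observe that every $h\in\bH(S)$ fixes each $X_\alpha\otimes X_{-\alpha}$, so that $\bC_\bdr=\bH$. The paper's proof is just this one observation; your additional verification that $\bA_\Gamma^Q=\bA_\Gamma$ (via $\widehat\pi$ preserving $\h$ and fixing $\Omega$, hence $\Omega_\h$) correctly supplies a detail the paper leaves implicit.
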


\begin{proof} By virtue of Theorem \ref{Taut}, this holds since any $h\in \bH(S)$, with $S$ an $R$-ring, maps $X_\alpha\otimes X_{-\alpha}$ to itself.
\end{proof}

\section{Standard Lie Bialgebras over Fields}
We now specialize to the case where $R=K$ is a field (of characteristic zero), and consider $\Kb/K$-twisted forms of standard Lie bialgebras. Recall that over $K$, 
two (bi)algebras whose underlying modules are finitely generated are locally isomorphic
with respect to the fppf topology if and only if they become isomorphic after scalar extension to $\Kb$. If $\bA$ is an algebraic group over $K$, then $H^1_{\Gal}(K,\bA)$ 
stands in bijection to $K$-isomorphism classes of $\bA$-torsors that become trivial over $\Kb$. 
In order to classify twisted forms of standard Lie bialgebras, we thus wish to compute $H^1_\Gal(K,\bA)$, where $\bA$ is the automorphism group scheme of the standard Lie bialgebra $(\g,\dr_{\drj})$. We can in fact prove the following.

\begin{Thm}\label{Tmain} The map $f$ of \eqref{exactdj} induces an isomorphism of pointed sets
\[f^*: H^1_\Gal(K,\bA)\overset{\sim}{\longrightarrow} H^1_\Gal(K,\bA_{\Gamma}).\]
\end{Thm}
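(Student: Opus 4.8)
The plan is to combine the split exactness of \eqref{exactdj} with the twisting formalism for non-abelian cohomology. Since the sequence is split there is a section $s:\bA_\Gamma\to\bA$ with $f\circ s=\Id$, so $f^*\circ s^*=\Id$ on $H^1_\Gal(K,\bA_\Gamma)$; this shows immediately that $f^*$ is surjective. It remains to prove that $f^*$ is injective, and the key is to control its fibres by the cohomology of the twisted forms of the kernel $\bH$.

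First I would invoke the cohomology exact sequence of pointed sets attached to \eqref{exactdj} together with its twisted versions. Given a cocycle $\xi\in Z^1_\Gal(K,\bA)$, twisting \eqref{exactdj} by $\xi$ produces an exact sequence
\[\mathbf{1}\to{}_\xi\bH\to{}_\xi\bA\to{}_{f(\xi)}\bA_\Gamma\to\mathbf{1},\]
and the twisting bijection identifies the fibre of $f^*$ through the class $[\xi]$ with the image of $H^1_\Gal(K,{}_\xi\bH)$ in $H^1_\Gal(K,{}_\xi\bA)$. Consequently, if $H^1_\Gal(K,{}_\xi\bH)$ is trivial for every $\xi$, then each fibre of $f^*$ reduces to a single point, which gives the desired injectivity.

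The remaining step is to identify the twisted torus ${}_\xi\bH$ and to see that its cohomology vanishes. Because $\bH$ is abelian, the conjugation action of $\bA$ on $\bH$ factors through $f$: in the semidirect decomposition $\bA=\bH\cdot\bA_\Gamma$ of Corollary \ref{Caut} one has $\widehat\pi\,\Ad_h\,\widehat\pi^{-1}=\Ad_{\pi\cdot h}$, while $\bH$ acts trivially on itself. Hence ${}_\xi\bH$ depends only on $f(\xi)$, and since $\bA_\Gamma$ is the constant group scheme $\Aut(\Gamma)$, a cocycle $f(\xi)$ is simply a homomorphism $\Gal(\Kb/K)\to\Aut(\Gamma)$. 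Its action on the character lattice $X^*(\bH)=\bbZ\Gamma$ (the root lattice of the adjoint group) permutes the basis of simple roots. Thus $X^*({}_\xi\bH)$ is a permutation Galois module, so ${}_\xi\bH$ is a quasi-trivial torus, isomorphic to a finite product of Weil restrictions $\mathrm{Res}_{L/K}\mathbb{G}_m$ indexed by the orbits of $f(\xi)$ on $\Gamma$. By Shapiro's lemma and Hilbert's Theorem 90, $H^1_\Gal(K,\mathrm{Res}_{L/K}\mathbb{G}_m)\cong H^1_\Gal(L,\mathbb{G}_m)=0$, whence $H^1_\Gal(K,{}_\xi\bH)$ is trivial for all $\xi$.

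I expect the main obstacle to be the middle step: checking carefully that the conjugation action of $\bA$ on $\bH$ factors through the permutation action of diagram automorphisms on the simple roots, so that every twist of $\bH$ has permutation character module and is therefore induced. Once this is established the vanishing of all fibres is immediate from Hilbert 90, and combined with the surjectivity coming from the splitting it yields that $f^*$ is a bijection of pointed sets.
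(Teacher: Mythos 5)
Your proposal is correct and follows essentially the same route as the paper: surjectivity from the splitting, and injectivity by identifying the fibres of $f^*$ with (quotients of) $H^1_\Gal(K,{}_{\xi}\bH)$, observing that the twist depends only on $f(\xi)$ acting by permutation of the roots, so that ${}_{\xi}\bH$ is a quasi-trivial torus whose $H^1$ vanishes by Shapiro's lemma and Hilbert~90. Your intermediate step making explicit that the conjugation action of $\bA$ on the abelian kernel $\bH$ factors through $\bA_\Gamma$ is exactly the point the paper invokes when it says ``$\bA_\Gamma$ acts on $\bH$ via permutation of the roots.''
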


\begin{proof} The split exact sequence of of Theorem \ref{Taut} induces an exact sequence
\[\xymatrix{
H^1_\Gal(K,\bH)\ar[r]^-{\Ad^*}&H^1_\Gal(K,\bA)\ar[r]^-{f^*}&H^1_\Gal(K,\bA_{\Gamma})}\]
of pointed sets. To prove injectivity of $f^*$, let $c\in H^1_\Gal(K,\bA)$. The set of all $c'\in H^1_\Gal(K,\bA)$ with $f^*(c)=f^*(c')$ is in bijection with a quotient of the set $H^1_\Gal(K,_{f^*(c)}\bH)$ by a certain equivalence relation, where $_{f^*(c)}$ indicates a twisted $\Gal (K)$-action.  As $\bA_\Gamma$ acts on $\bH$ via permutation of the roots, $_{f^*(c)}\bH$ is a permutation torus (i.e.\ the Weil restriction of a split torus). Shapiro's Lemma and Hilbert's Theorem 90 then combine to imply that $H^1_\Gal(K,_{f^*(c)}\bH)$ is trivial for each $c$, whence the claim follows.
\end{proof}

An immediate consequence is the following.

\begin{Cor}\label{Ctrivial} Assume that $\g$ is of type $\mathrm{A}_1$, $\mathrm{B}_n$ or $\mathrm{C}_n$ for any $n$, $\mathrm{E}_7$, $\mathrm{E}_8$, $\mathrm{F}_4$ or
$\mathrm{G}_2$. Then all twisted forms of $(\g,\dr_{DJ})$ are isomorphic.
\end{Cor}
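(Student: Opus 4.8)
The plan is to reduce the statement to the triviality of $H^1_\Gal(K,\bA)$. By Corollary \ref{Cbij}, applied with $R=K$ and $S=\Kb$, this cohomology set is in bijection with the set of $K$-isomorphism classes of $\Kb/K$-twisted forms of $(\g,\dr_\drj)$; as recalled at the beginning of Section 4, over a field these are exactly the twisted forms in the fppf sense, so the identification is legitimate. Hence it suffices to show that $H^1_\Gal(K,\bA)$ is a singleton, for then every twisted form is isomorphic to the standard one and the forms are mutually isomorphic.

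The key observation is that the listed types are precisely those whose Dynkin diagram admits no nontrivial symmetries. First I would note that for each of $\mathrm{A}_1$, $\mathrm{B}_n$, $\mathrm{C}_n$, $\mathrm{E}_7$, $\mathrm{E}_8$, $\mathrm{F}_4$ and $\mathrm{G}_2$ the automorphism group $\Aut(\Gamma)$ of the base $\Gamma$ is trivial. Consequently the constant group scheme $\bA_\Gamma$ associated to $\Aut(\Gamma)$ equals the trivial group scheme $\mathbf{1}$, and therefore $H^1_\Gal(K,\bA_\Gamma)$ consists of a single element.

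Next I would invoke Theorem \ref{Tmain}, which asserts that $f^*\colon H^1_\Gal(K,\bA)\to H^1_\Gal(K,\bA_\Gamma)$ is an isomorphism of pointed sets. Since the target is a singleton, so is the source $H^1_\Gal(K,\bA)$. Combining this with the bijection of Corollary \ref{Cbij} described above yields that all $\Kb/K$-twisted forms of $(\g,\dr_\drj)$ are isomorphic, which is the assertion.

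There is no genuine obstacle here beyond the bookkeeping: the entire analytic content is carried by Theorem \ref{Tmain}, and the only type-specific input is the classical fact that these seven Dynkin types have trivial diagram automorphism group. The one point meriting care is the correct matching of the cohomological formalism with the isomorphism-classification of forms — that is, ensuring that the coefficient group scheme $\bA$ appearing in Theorem \ref{Tmain} is indeed the automorphism group scheme of $(\g,\dr_\drj)$ used in Corollary \ref{Cbij}, and that $f$ is the morphism of \eqref{exactdj}; both hold by construction, so the chain of bijections closes without further effort.
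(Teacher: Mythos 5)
Your proposal is correct and follows exactly the paper's own argument: the listed types have trivial diagram automorphism group, so $\bA_\Gamma$ is trivial, Theorem \ref{Tmain} then forces $H^1_\Gal(K,\bA)$ to be a singleton, and Corollary \ref{Cbij} translates this into the uniqueness of the twisted form up to isomorphism. The extra care you take in matching the coefficient group of Theorem \ref{Tmain} with that of Corollary \ref{Cbij} is sound but not a point of divergence from the paper.
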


\begin{proof} The assumption implies that $\bA_{\Gamma}$ is the trivial group, whence the triviality of $H^1_\Gal(K,\bA_{\Gamma})$ and, by the above theorem,
of $H^1_\Gal(K,\bA)$.
\end{proof}

\section{Scalar Multiples} 
The Belavin--Drinfeld classification is up to equivalence, which, as explained in the introduction, groups together scalar multiples of Lie bialgebra structures on a given Lie algebra.
Therefore, it is reasonable to consider the set $B_\alpha(K)$ of $K$-Lie bialgebra structures on $\g$ that, after scalar extension to $\Kb$, become isomorphic to $\alpha \dr_{\bdr}$ for some $\alpha\in\Kb^*$ and some Belavin--Drinfeld $r$-matrix $r_\bdr$.

We start in a more general setting. Let $\g$ be a split simple Lie algebra over an integral domain $R$ (as always assumed to be a $\bbQ$-ring), let $\delta$ be an $R$-Lie 
bialgebra structure on $\g$, and let $\alpha\in S^*$ with $S$ a faithfully flat $R$-ring. Consider the Lie bialgebra structure $\alpha\delta_S$ on $\g_S$. It is worth noting 
that a priori, there are three possible scenarios for the descent properties of $\alpha\delta_S$. Recall that we wish to consider 
Lie bialgebra structures $\delta'$ on $S/R$-twisted forms $\g'$ of $\g$ such that $(\g'_S,\delta'_S)\simeq(\g_S,\alpha\delta)$. 
We phrase this by saying that $\alpha\delta$ \emph{descends} to $\g'$. Then either $\alpha\delta_S$ descends to $\g$, or it
does not descend to $\g$, but descends to a (non-split) twisted form of $\g$, or $\alpha\delta_S$ does not descend to any form of $\g$.

\begin{Rk} Let $\delta_e$ be an $S$-Lie bialgebra structure on $\g_S$. Then any $R$-Lie bialgebra 
$(\g',\delta')$ with $(\g_S',\delta_S')\simeq (\g_S,\delta_e)$ is $R$-isomorphic to the restriction $\delta_e^\theta$ of $\delta_e$ to
\[\g_S^\theta=\{x\in\g\otimes S| \theta(x\otimes 1)=x\otimes 1\}\]
for some descent datum $\theta$ of Lie algebras.  Indeed if $\theta'$ is the standard descent datum on 
$(\g_S',\delta_S')$ and $\phi:(\g_S',\delta_S')\to (\g_S,\delta_e)$ is an $S$-isomorphism, then $\theta: (\phi\otimes\Id_S)\theta'(\phi^{-1}\otimes\Id_S)$ is a descent datum on $(\g_S,\delta_e)$. 
Thus $\delta_e$ restricts to an $R$-bialgebra structure on  $\g_S^\theta$. This is $R$-isomorphic to $(\g',\delta')$ since by construction of $\theta$, the map $\phi$ is an 
isomorphism of $S$-Lie bialgebras with descent data between $(\g_S',\delta_S')$ with datum $\theta'$ and $(\g_S,\delta_e)$ with datum $\theta$. Thus to classify 
$R$-Lie bialgebras that become isomorphic to $(\g_S,\delta_e)$ over $S$, it suffices to consider restrictions
of $\delta_e$ itself to twisted forms of $\g$. This will be used throughout for the case $\delta_e=\alpha\delta$
\end{Rk}

\subsection{Twisted Cohomology}
The following proposition sheds some light on the occurrence of the possible cases discussed in the opening of this section. Our approach extends that used for so called twisted Belavin--Drinfeld cohomologies, see e.g.\ \cite[Section 7]{KKPS}.
We focus on the case when $R=K$ is a field, and comment on the more general case below.

\begin{Prp}\label{Pfields} Let $\g$ be a split simple Lie algebra over $K$ and let $\delta=\partial r$ for an $r$-matrix $r\in\g\otimes\g$ with 
$r+\kappa(r)=\lambda\Omega$ for some $\lambda\in K^*$. Let $\alpha\in \Kb^*$. Finally let $\g'$ be a twisted form of $\g$ and $(u_\gamma)$ the corresponding $\Gamma$-cocycle, where $\Gamma=\Gal(K)$. Then $\alpha\overline\delta$ descends to $\g'$ if and only if one of the following mutually exclusive conditions holds.
\begin{enumerate}
 \item $\alpha\in K^*$ and $(u_\gamma\otimes u_\gamma)(r)=r$ for each $\gamma\in\Gamma$.
 \item $\alpha^2\in K^*\setminus K^{*2}$ and
 \begin{equation}\label{tau}
(u_\gamma\otimes u_\gamma)(r)=\left\{\begin{array}{ll}
                                         r & \text{if\ } \gamma\in\Gal(K(\alpha))\\
                                         \kappa(r) & \text{if\ } \gamma(\alpha)=-\alpha.
                                        \end{array}\right.
\end{equation}
\end{enumerate}
\end{Prp}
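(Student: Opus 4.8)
The plan is to translate ``$\alpha\overline\delta$ descends to $\g'$'' into a Galois-equivariance condition on the cobracket and then read off the arithmetic of $\alpha$ from it by means of Lemma \ref{Lelim}. Write $\sigma_\gamma=\Id_\g\otimes\gamma$ for the canonical semilinear action on $\g_{\Kb}=\g\otimes_K\Kb$, and fix the convention for the cocycle $(u_\gamma)$ under which the twisted form $\g'$ is the fixed subalgebra of the twisted action $\widetilde\gamma=u_\gamma\circ\sigma_\gamma$. By the remark preceding the statement, $\alpha\overline\delta=\partial(\alpha r)$ descends to $\g'$ precisely when the Lie-algebra descent datum defining $\g'$ is compatible with it, i.e.\ when $\partial(\alpha r)\circ\widetilde\gamma=(\widetilde\gamma\otimes\widetilde\gamma)\circ\partial(\alpha r)$ for all $\gamma$. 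Since $r$ has entries in $K$ it is fixed by $\sigma_\gamma\otimes\sigma_\gamma$, while each $u_\gamma$ is a Lie-algebra automorphism, hence commutes with the adjoint action in \eqref{coboundary} and, by Remark \ref{Omega}, fixes $\Omega$. Expanding both sides and cancelling the bijection $\widetilde\gamma$, the equivariance condition becomes: for every $b\in\g_{\Kb}$,
\[
(\ad_b\otimes 1+1\otimes\ad_b)\bigl(\alpha r-\gamma(\alpha)(u_\gamma\otimes u_\gamma)(r)\bigr)=0.
\]
By \eqref{R}, applied after base change to $\Kb$, this is equivalent to the existence of $\mu_\gamma\in\Kb$ with $\gamma(\alpha)(u_\gamma\otimes u_\gamma)(r)=\alpha r-\mu_\gamma\Omega$.

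Next I extract $\mu_\gamma$ and rigidify. Applying $\kappa$ to this relation and adding, and using $\kappa(\Omega)=\Omega$, $r+\kappa(r)=\lambda\Omega$ with $\lambda\in K^*$, and freeness of $\Omega$ (Remark \ref{Omega}), I obtain $\gamma(\alpha)\lambda=\alpha\lambda-2\mu_\gamma$. On the other hand both $\alpha r$ and $\gamma(\alpha)(u_\gamma\otimes u_\gamma)(r)$ are $r$-matrices over $\Kb$ (the operator $\CYB$ is quadratic, and $u_\gamma$ preserves $\CYB$ and fixes $\Omega$), with symmetric parts $\alpha\lambda\Omega$ and $\gamma(\alpha)\lambda\Omega$; the relation exhibits the second as the first minus $\mu_\gamma\Omega$, so Lemma \ref{Lelim}, applied over the field $\Kb$, forces $\mu_\gamma=0$ or $\mu_\gamma=\alpha\lambda$. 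Substituting these into $\gamma(\alpha)\lambda=\alpha\lambda-2\mu_\gamma$ yields $\gamma(\alpha)=\alpha$ (resp.\ $\gamma(\alpha)=-\alpha$), whereupon the relation itself gives $(u_\gamma\otimes u_\gamma)(r)=r$ (resp.\ $(u_\gamma\otimes u_\gamma)(r)=\kappa(r)$, using $r-\lambda\Omega=-\kappa(r)$). Conversely, each of these two per-$\gamma$ alternatives plainly satisfies the relation, so descent is equivalent to the assertion that every $\gamma\in\Gamma$ falls into one of them.

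Finally I globalize the per-$\gamma$ dichotomy. It shows that descent forces $\gamma(\alpha)\in\{\alpha,-\alpha\}$ for all $\gamma$; hence $\alpha^2$ is $\Gamma$-fixed and $\alpha$ has at most two conjugates, and $-\alpha\neq\alpha$ as $\ch K=0$. Thus either $\alpha\in K^*$, in which case all $\gamma$ lie in the first alternative and descent reads $(u_\gamma\otimes u_\gamma)(r)=r$ for all $\gamma$, giving case (1); or $\alpha\notin K$ with $\alpha^2\in K^*$, which forces $[K(\alpha):K]=2$ and $\alpha^2\notin K^{*2}$, the first alternative holding exactly for the $\gamma$ fixing $\alpha$ and the second for those with $\gamma(\alpha)=-\alpha$, i.e.\ \eqref{tau}, giving case (2). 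The two cases are mutually exclusive, one having $\alpha\in K^*$ and the other $\alpha\notin K^*$, which completes the equivalence.

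The algebra here is routine; the conceptual heart is the passage from the single linear relation $\gamma(\alpha)(u_\gamma\otimes u_\gamma)(r)=\alpha r-\mu_\gamma\Omega$ to the rigid dichotomy $\mu_\gamma\in\{0,\alpha\lambda\}$, which is exactly what Lemma \ref{Lelim} supplies and which is what pins $\gamma(\alpha)$ to $\pm\alpha$. I expect the main care to be needed not in any single computation but in correctly setting up the descent criterion --- matching the Lie-algebra descent datum defining $\g'$ with the Galois cocycle $(u_\gamma)$ --- and in checking that the two candidate $r$-matrices genuinely satisfy the hypotheses of Lemma \ref{Lelim} after base change to $\Kb$.
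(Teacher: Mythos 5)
Your proposal is correct and follows essentially the same route as the paper: translate descent into Galois-equivariance of the cobracket, use the characterization of $\Kb\Omega$ from Remark \ref{Omega} to obtain $\gamma(\alpha)(u_\gamma\otimes u_\gamma)(r)=\alpha r-\mu_\gamma\Omega$, and apply Lemma \ref{Lelim} over $\Kb$ to force $\mu_\gamma\in\{0,\alpha\lambda\}$, hence $\gamma(\alpha)=\pm\alpha$ and the stated dichotomy. The only (immaterial) difference is that you derive $\gamma(\alpha)\lambda=\alpha\lambda-2\mu_\gamma$ by the $\kappa$-and-add trick before invoking Lemma \ref{Lelim}, whereas the paper does so afterwards.
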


Here we are using the notation $\overline\g=\g_{\Kb}$ and $\overline\delta=\delta_{\Kb}$. 

\begin{proof} Assume that $\alpha\overline\delta$ descends to $\g'$. Then $\overline\delta(x)\in\g'\otimes\g'$ for any $x\in\g'$, which in terms of cocycles implies
\[(u_\gamma\otimes u_\gamma)^{\gamma\otimes\gamma}(\alpha\overline\delta(x))=\alpha\overline\delta(x)\]
for each $\gamma\in\Gamma$. Using the fact that $\delta=\partial r$, that $^{\gamma\otimes\gamma}r=r$ since $r\in\g\otimes\g$, and that $u_\gamma^\gamma x=x$ since $x\in\g'$, this is equivalent to
\[[1\otimes x+x\otimes 1, \gamma(\alpha)(u_\gamma\otimes u_\gamma)(r)]=[1\otimes x+x\otimes 1, \alpha r]\]
for each $\gamma\in\Gamma$. This is in turn equivalent to
\begin{equation}\label{ugamma}
\gamma(\alpha)(u_\gamma\otimes u_\gamma)(r)=\alpha r-\mu_\gamma\Omega 
\end{equation}
for some $\mu_\gamma\in K$. Now $\CYB(\alpha r)=\alpha^2 \CYB(r)=0$ and $\CYB(\gamma(\alpha) (u_\gamma\otimes u_\gamma)(r))=\gamma(\alpha)^2u_\gamma^{\otimes 3}\CYB(r)=0$.
Since
\[(u_\gamma\otimes u_\gamma)(r)+\kappa(u_\gamma\otimes u_\gamma)(r)=(u_\gamma\otimes u_\gamma)(r+\kappa(r))=\Omega,\]
we may, after extending scalars to $\Kb$, apply Lemma \ref{Lelim} with $r_1=\alpha r$, $r_2=\gamma(\alpha)(u_\gamma\otimes u_\gamma)(r)$, $\lambda_1=\alpha\lambda$, $\lambda_2=\gamma(\alpha)\lambda$ and $\mu=\mu_\gamma$. Thus we get $\mu_\gamma=j_\gamma\alpha\lambda$ for some $j_\gamma=0,1$. Since $r+\kappa(r)=\lambda\Omega$, \eqref{ugamma} is then equivalent to
\[\gamma(\alpha)(u_\gamma\otimes u_\gamma)(r)=\alpha (-\kappa)^{j_\gamma}(r).\]
Applying $\kappa$ to both sides gives
\[\gamma(\alpha)(u_\gamma\otimes u_\gamma)\kappa(r)=\alpha(-1)^{j_\gamma}\kappa^{{j_\gamma}+1}(r),\]
and adding the two equations, using $r+\kappa(r)=\lambda\Omega$ and $(u_\gamma\otimes u_\gamma)(\Omega)=\Omega$, yields
\[\gamma(\alpha)\Omega=(-1)^{j_\gamma}\alpha\Omega.\]
Thus $\gamma(\alpha^2)=\alpha^2$ for each $\gamma\in\Gamma$, whence $\alpha^2\in K^*$. For a given $\gamma\in\Gamma$, two cases are possible: either ${j_\gamma}=0$, i.e.\ equivalently $\gamma(\alpha)=\alpha$, and then \eqref{ugamma} gives $(u_\gamma\otimes u_\gamma)(r)=r$; or ${j_\gamma}=1$, i.e.\ equivalently $\gamma(\alpha)=-\alpha$, and then \eqref{ugamma} gives $(u_\gamma\otimes u_\gamma)(r)=\kappa(r)$. It follows that if $\alpha\overline\delta$ descends to $\g'$, then (1) or (2) holds.

Assume, conversely, that (1) or (2) holds. It is straight-forward to check that \eqref{ugamma} is satisfied for each $\gamma\in\Gamma$, with $\mu_\gamma=0$ if $\gamma(\alpha)=\alpha$, and $\mu_\gamma=\alpha\lambda$ if $\gamma(\alpha)=-\alpha$. By the chain of equivalences in the above argument, this implies that
\[(u_\gamma\otimes u_\gamma)^{\gamma\otimes\gamma}(\alpha\overline\delta(x))=\alpha\overline\delta(x).\]
We may then conclude with the lemma below.
\end{proof}

\begin{Lma} Let $\g$ be a finite-dimensional vector space over a field $K$, $(u_\gamma)$ a $\Gamma=\Gal(K)$-cocycle in $\bGL(\g)$ with corresponding twisted form $\g'$, and $\delta$ a Lie coalgebra structure on $\overline\g$. Then $\delta$ descends to $\g'$ if and only if 
\[(u_\gamma\otimes u_\gamma)^{\gamma\otimes\gamma}(\delta(x))=\delta(x)\]
for each $x\in\g'$.
\end{Lma}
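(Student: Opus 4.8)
The plan is to reduce everything to the standard dictionary of Galois descent for vector spaces, identifying $\overline\g\otimes_{\Kb}\overline\g$ with $(\g\otimes_K\g)\otimes_K\Kb$ so that the semilinear operators ${}^{\gamma\otimes\gamma}$ become the usual Galois action on the second factor. Recall that the cocycle $(u_\gamma)$ defines the twisted semilinear action $\gamma\ast x=u_\gamma{}^\gamma x$ on $\overline\g$, that $\g'$ is exactly its fixed-point space, and that the canonical map $\g'\otimes_K\Kb\to\overline\g$ is an isomorphism; in particular any $K$-basis of $\g'$ is simultaneously a $\Kb$-basis of $\overline\g$.

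The heart of the argument is to identify the fixed points of the induced twisted action on the tensor square. First I would fix a $K$-basis $(e_i)$ of $\g'$, which by the above is also a $\Kb$-basis of $\overline\g$; thus $(e_i\otimes e_j)$ is a $K$-basis of $\g'\otimes_K\g'$ and a $\Kb$-basis of $\overline\g\otimes_{\Kb}\overline\g$. Since $u_\gamma{}^\gamma e_i=e_i$ for all $i$, a direct computation gives $(u_\gamma\otimes u_\gamma){}^{\gamma\otimes\gamma}(e_i\otimes e_j)=(u_\gamma{}^\gamma e_i)\otimes(u_\gamma{}^\gamma e_j)=e_i\otimes e_j$, so that for $\xi=\sum_{i,j}a_{ij}\,e_i\otimes e_j$ with $a_{ij}\in\Kb$ one finds
\[(u_\gamma\otimes u_\gamma){}^{\gamma\otimes\gamma}(\xi)=\sum_{i,j}\gamma(a_{ij})\,e_i\otimes e_j.\]
Hence $\xi$ is fixed by every $\gamma\in\Gamma$ if and only if each $a_{ij}$ lies in $K$, i.e.\ if and only if $\xi\in\g'\otimes_K\g'$. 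This is the familiar fact that descent is compatible with tensor products, made concrete by the basis computation.

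Finally I would unwind the meaning of descent: by definition $\delta$ descends to $\g'$ precisely when it restricts to a $K$-linear map $\g'\to\g'\otimes_K\g'$, that is, when $\delta(\g')\subseteq\g'\otimes_K\g'$; the anti-symmetry and co-Jacobi axioms for the restricted map are then inherited from $\delta$ on the ambient space $\overline\g$, so the restriction is automatically a Lie coalgebra structure and its base change recovers $\delta$. Combining this with the previous paragraph closes the argument: the condition $(u_\gamma\otimes u_\gamma){}^{\gamma\otimes\gamma}(\delta(x))=\delta(x)$ for all $\gamma$ says exactly that $\delta(x)$ lies in the fixed-point space of the twisted action, which is $\g'\otimes_K\g'$, and imposing this for every $x\in\g'$ is verbatim the inclusion $\delta(\g')\subseteq\g'\otimes_K\g'$. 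The only genuinely delicate point is the identification in the second paragraph, but the explicit basis argument renders it routine; everything else is bookkeeping with the descent dictionary.
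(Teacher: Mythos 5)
Your proposal is correct and follows essentially the same route as the paper: the key step in both is to fix a $K$-basis of $\g'$ (which is a $\Kb$-basis of $\overline\g$) and compute that the fixed points of the twisted action on $\overline\g\otimes\overline\g$ are exactly $\g'\otimes_K\g'$, after which descent of $\delta$ is just the inclusion $\delta(\g')\subseteq\g'\otimes_K\g'$. The extra remarks on inheriting the coalgebra axioms are harmless bookkeeping that the paper leaves implicit.
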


\begin{Rk} Of course $\g\simeq \g'$ as $K$-vector spaces. What the lemma achieves is to establish an easily checked condition for when the coalgebra structure $\delta$ is compatible with the Galois action. 
\end{Rk}

\begin{proof} What needs to be shown is that the inclusion $\g'\otimes\g'\subseteq (\overline\g\otimes\overline\g)^{\Gamma'}$ is an equality, where the right hand side denotes the fixed points of $\overline\g\otimes\overline\g$ under the component-wise twisted $\Gamma$-action
\[\gamma\cdot_u(y\otimes z)=u_\gamma(^\gamma y)\otimes u_\gamma(^\gamma z).\]
Let $(e_i)$ be a $K$-basis of $\g'$. It is then finite by assumption, and a $\Kb$-basis of $\overline\g$. Thus if $x\in\overline\g\otimes\overline\g$, then
\[x=\sum_{i,j} \alpha_{ij} e_i\otimes e_j\]
for some $\alpha_{ij}\in \Kb$. Then for each $\gamma\in\Gamma$,
\[\gamma\cdot_u x =\sum_{i,j} \gamma(\alpha_{ij}) u_\gamma(^\gamma e_i)\otimes u_\gamma(^\gamma e_j)=\sum_{i,j} \gamma(\alpha_{ij}) e_i\otimes e_j,\]
where the last equality holds since $e_k\in\g'$ implies $u_\gamma(^\gamma e_k)=e_k$ for each $k$. If $x\in(\overline\g\otimes\overline\g)^{\Gamma'}$, then for each $\gamma\in\Gamma$,
$\gamma\cdot_u x=x$, which, together with the above, by linear independence implies that $\gamma(\alpha_{ij})=\alpha_{ij}$, i.e.\ $\alpha_{ij}\in K$, for each $i$ and $j$. Thus $x\in\g'\otimes\g'$, as desired.
\end{proof}

\begin{Rk} Note that this result encompasses those Lie bialgebras that in \cite{KKPS} and \cite{PS} are treated 
by means of twisted Belavin--Drinfeld cohomologies. Indeed, when they exist, these are obtained by constructing the cocycle $(u_\gamma)$ as follows: 
$u_\gamma=\Id$ for any $\gamma\in\Gal(K(\alpha))$, and $u_{\gamma_\alpha}=\Ad_X^{-1}\phantom{}^{\gamma_\alpha}\Ad_X$ for $X\in\bG(\Kb)$ satisfying certain conditions. 
(Here $\bG$ is an adjoint group with $\g=\Lie\bG$, and $\gamma_\alpha$ is the non-trivial element of $\Gal(K(\alpha)/K)$.) Note that, as the authors remark in \cite{PS}, this 
cocycle is trivial as a Lie algebra cocycle, i.e.\ the fixed locus is the split Lie algebra $\g$. However, the descended Lie bialgebra is, by Lemma \ref{Iso}, not 
isomorphic to $(\g,\beta\partial r)$ for any $\beta\in K^*$.  
\end{Rk}

Led by the above, we define twisted cohomologies as follows.  For each $\alpha\in \Kb^*$ with $\alpha^2\in K^*\setminus K^{*2}$ we write 
$\overline Z^1_\alpha=\overline Z^1(K,\bAut(\g,\partial r),\alpha)$ for the set of all $\Gal (K)$-cocycles $(u_\gamma)$ in $\bAut(\g)$ that 
satisfy \eqref{tau}. Thus $\overline Z^1_\alpha\subset \bAut(\g)(\Kb)$, and we define an equivalence relation $\sim$ on $\overline Z^1_\alpha$ by
\begin{equation}\label{equiv}
(v_\gamma)\sim(u_\gamma) \Longleftrightarrow \exists \rho\in\bAut(\g,\partial r)(\Kb): \forall \gamma: v_\gamma=\rho^{-1}u_\gamma\phantom{}^\gamma\rho 
\end{equation}
and write $\overline H^1_\Gal(K,\bAut(\g,\partial r),\alpha)$ for the set of equivalence classes. The purpose of this set is explained by the following result.

\begin{Prp}\label{PGalois} Let $\alpha\in \Kb^*$. The set of all $K$-Lie bialgebras that 
become isomorphic to $(\overline\g,\alpha\partial r)$ over $\Kb$ is in bijection with
\begin{enumerate}
 \item $H^1_\Gal(K,\bAut(\g,\partial r))$, if $\alpha\in K^*$,
 \item $\overline H^1_\Gal(K,\bAut(\g,\partial r),\alpha)$, if $\alpha^2\in K^*\setminus K^{*2}$, and
 \item the empty set, otherwise.
\end{enumerate}
\end{Prp}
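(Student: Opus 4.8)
The plan is to classify, up to $K$-isomorphism, those $K$-Lie bialgebras whose base change to $\Kb$ is isomorphic to $(\overline\g,\alpha\partial r)$ by realizing each of them as a descent of $\alpha\overline\delta$ along a Galois cocycle in $\bAut(\g)$, and then reading off from Proposition \ref{Pfields} precisely which cocycles occur. First I would invoke the Remark above (applied with $\delta_e=\alpha\overline\delta$ and $S=\Kb$): every $K$-Lie bialgebra $(\g',\delta')$ with $(\overline{\g'},\overline{\delta'})\simeq(\overline\g,\alpha\partial r)$ is $K$-isomorphic to the restriction of $\alpha\overline\delta$ to the fixed locus $\g_{(u)}$ of a Lie-algebra descent datum, equivalently to the restriction of $\alpha\overline\delta$ to the twisted form $\g_{(u)}$ determined by a Galois cocycle $(u_\gamma)$ in $\bAut(\g)$. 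Hence the set to be described is the set of $K$-isomorphism classes of the bialgebras obtained this way, ranging over cocycles $(u_\gamma)$ in $\bAut(\g)$ for which $\alpha\overline\delta$ actually descends.

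By Proposition \ref{Pfields}, $\alpha\overline\delta$ descends to $\g_{(u)}$ if and only if either $\alpha\in K^*$ and $(u_\gamma\otimes u_\gamma)(r)=r$ for all $\gamma$, i.e.\ (by Lemma \ref{Laut}) $(u_\gamma)$ takes values in $\bAut(\g,\partial r)$; or $\alpha^2\in K^*\setminus K^{*2}$ and $(u_\gamma)$ satisfies \eqref{tau}, i.e.\ $(u_\gamma)\in\overline Z^1_\alpha$. If neither holds, i.e.\ if $\alpha^2\notin K^*$ (the only remaining possibility, since $\alpha^2\in K^{*2}$ would force $\alpha\in K^*$), then no twisted form admits the descent of $\alpha\overline\delta$, so the set is empty; this is case (3). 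Thus in the remaining cases the forward assignment sends an admissible cocycle to the $K$-Lie bialgebra $(\g_{(u)},\alpha\overline\delta|_{\g_{(u)}})$, whose base change is $(\overline\g,\alpha\partial r)$, and the Remark shows this assignment is surjective onto the target set.

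The heart of the argument is to decide when two admissible cocycles $(u_\gamma),(v_\gamma)$ yield $K$-isomorphic bialgebras. A $K$-isomorphism $\phi$ of the descended bialgebras base-changes to an element $\rho\in\bAut(\g)(\Kb)$ satisfying $v_\gamma=\rho^{-1}u_\gamma\,{}^\gamma\rho$, the usual relation making the two underlying Lie-algebra forms $K$-isomorphic; conversely any such $\rho$ induces a $K$-isomorphism of Lie algebras. The point is that this isomorphism respects the bialgebra structures exactly when $\rho\in\bAut(\g,\partial r)(\Kb)$: since both structures are restrictions of $\alpha\overline\delta$ and the fixed locus $\g_{(u)}$ spans $\overline\g$ over $\Kb$, the bialgebra-morphism condition $(\phi\otimes\phi)\circ\delta'=\delta''\circ\phi$ is equivalent to $(\rho\otimes\rho)\circ(\alpha\overline\delta)=(\alpha\overline\delta)\circ\rho$ on all of $\overline\g$, i.e.\ to $\rho\in\bAut(\overline\g,\alpha\partial r)(\Kb)$; and as $\alpha$ is invertible in $\Kb$, Lemma \ref{Laut} gives $\bAut(\overline\g,\alpha\partial r)=\bAut(\overline\g,\partial r)=\bAut(\g,\partial r)$ over $\Kb$. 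I expect this bialgebra-versus-Lie-algebra distinction to be the main obstacle: one must check that the admissible $\rho$ are cut out by membership in $\bAut(\g,\partial r)$ rather than merely $\bAut(\g)$, which is exactly where the spanning property of the twisted form and Lemma \ref{Laut} enter.

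Assembling these facts completes the proof. In case (1) the admissible cocycles are precisely the cocycles in $\bAut(\g,\partial r)$ and the isomorphism relation is the standard cohomology relation for that group, so the set is $H^1_\Gal(K,\bAut(\g,\partial r))$; alternatively this follows directly from Corollary \ref{Cbij} applied to the $K$-Lie bialgebra $(\g,\alpha\partial r)$, using $\bAut(\g,\alpha\partial r)=\bAut(\g,\partial r)$. In case (2) the admissible cocycles form $\overline Z^1_\alpha$ and the isomorphism relation is exactly the relation $\sim$ of \eqref{equiv}, so the set is $\overline H^1_\Gal(K,\bAut(\g,\partial r),\alpha)$. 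This gives the asserted bijections in all three cases.
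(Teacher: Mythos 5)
Your proposal is correct and follows essentially the same route as the paper: both reduce to restrictions of $\alpha\overline\delta$ along cocycles admissible by Proposition \ref{Pfields}, handle case (1) via Corollary \ref{Cbij} and the identification $\bAut(\overline\g,\alpha\partial r)=\bAut(\overline\g,\partial r)$, and for case (2) match the isomorphism relation to \eqref{equiv} using the spanning of $\overline\g$ by the fixed locus. Your explicit justification that the conjugating element $\rho$ must lie in $\bAut(\g,\partial r)(\Kb)$ rather than merely $\bAut(\g)(\Kb)$ is exactly the point the paper makes more tersely.
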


\begin{proof} Corollary \ref{Cbij} and the fact that $\bAut(\g,\alpha\partial r))\simeq\bAut(\g,\partial r))$ whenever $\alpha\in K^*$ together imply (1), and (3) follows from Proposition \ref{Pfields}. 
For (2), the proposition provides a surjective map from the set of all $K$-Lie bialgebras that become isomorphic to $(\overline\g,\alpha\partial r)$
to $\overline Z^1$. To show that it induces a bijection with the twisted cohomology, let first $\g'$ and $\g''$ be twisted forms of 
$\g$ to which $\alpha\overline\delta$ descend (where $\delta=\partial r$), and let $(u_\gamma)$ and $(v_\gamma)$ be the cocycles 
corresponding to $\g'$ and $\g''$, respectively. Denote by $\delta'$ and $\delta''$ the respective descended Lie bialgebra structures. 

If $(\g',\delta')\simeq(\g'',\delta'')$,
then there is a $\Kb$-automorphism $\rho$ of $(\overline\g,\alpha\partial r)$ that maps $\overline\g^{\Gamma_u}$ to $\overline\g^{\Gamma_v}$. (We denote by
$\overline\g^{\Gamma_u}$ the set of all $x\in\overline\g$ that are fixed under the twisted action $\gamma\cdot x=u_\gamma\phantom{}^\gamma x$ of $\Gamma=\Gal(K)$, and 
likewise for $\Gamma_v$.) Then for any $\gamma\in\Gamma$ and $x\in\overline\g^{\Gamma_u}$,
\[\rho(u_\gamma\phantom{}^\gamma x)=\rho(x)=v_\gamma\phantom{}^\gamma\rho(x).\]
Since $\overline\g$ is generated as a $\Kb$-vector space by $\overline\g^{\Gamma_u}$, this implies that $u_\gamma=\rho^{-1}v_\gamma(^\gamma\rho)$, i.e.\ $(u_\gamma)\sim(v_\gamma)$,
since the $\Kb$-automorphisms of $(\overline\g,\alpha\partial r)$ coincide with those of $(\overline\g,\partial r)$. If conversely $(u_\gamma)\sim(v_\gamma)$ with $\rho$ satisfying $u_\gamma=\rho^{-1}v_\gamma(^\gamma\rho)$, then it follows that $\rho$ 
maps $\overline\g^{\Gamma_u}$ to $\overline\g^{\Gamma_v}$ and thus induces an isomorphism $(\g',\delta')\to(\g'',\delta'')$. 
\end{proof}

In the more general case where $R$ is an integral domain, Proposition \ref{Pfields} admits the following generalization. The proof of the following proposition
and the subsequent corollary are analogous to those of the corresponding results over fields. (See the preprint \cite{AP} of this paper for the details.)

\begin{Prp} Assume that $R$ is an integral domain. Let $\g$ be a split simple Lie algebra over $R$, and let $\g'=\g_S^\theta$ be the $S/R$-twisted 
form of $\g$  corresponding to the descent datum $\theta$ and cocycle $\phi$. Let $\delta=\partial r$ be a coboundary $R$-Lie bialgebra 
structure on $\g$, with $r\in\g\otimes\g$ an $r$-matrix satisfying $r+\kappa(r)=\lambda\Omega$ for some $\lambda\in R^*$. Finally let $S$ be a faithfully flat $R$ ring and $\alpha\in S^*$. 
Then $\alpha\delta_S$ descends to $\g'$ if and only if one of the following mutually exclusive conditions holds.
\begin{enumerate}
 \item $\alpha\in R^*$ and $(\phi\otimes \phi)(r\otimes1\otimes1)=r\otimes1\otimes1$, 
 \item $\alpha^2\in R^*\setminus R^{*2}$ and $(\phi\otimes\phi)(r\otimes 1\otimes1)=\kappa(r)\otimes1\otimes 1$.
 \end{enumerate}
\end{Prp}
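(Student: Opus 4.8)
The plan is to mirror the proof of Proposition \ref{Pfields}, transporting it from Galois descent along $\Kb/K$ to faithfully flat descent along $S/R$. The dictionary is: the single cocycle $\phi\in\bAut(\g)(S\otimes S)$ takes over the role of the family $(u_\gamma)$, and the two coprojections $\alpha\otimes 1$ and $1\otimes\alpha$ of $\alpha$ into $S\otimes S$ play the roles of $\alpha$ and $\gamma(\alpha)$. First I would use Proposition \ref{Pdescent} and the Remark preceding this subsection to rephrase ``$\alpha\delta_S$ descends to $\g'=\g_S^\theta$'' as the requirement that the Lie cobracket $\alpha\delta_S$ be compatible with the descent datum $\theta$. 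Writing $\theta$ through $\phi$ and the standard datum $\Id_\g\otimes\kappa$, and using $\delta=\dr$ together with \eqref{coboundary}, this compatibility becomes, over $S\otimes S$, the single equation
\[(1\otimes\alpha)\,(\phi\otimes\phi)(r)=(\alpha\otimes 1)\,r-\mu\,\Omega\]
for some $\mu\in S\otimes S$, which is the exact analogue of \eqref{ugamma}. (Here and below $r$ and $\Omega$ are tacitly base-changed to $S\otimes S$, i.e.\ written $r\otimes 1\otimes 1$ and $\Omega\otimes 1\otimes 1$.)

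Next I would extract $\mu$ and the constraint on $\alpha$ as over fields. Since $\phi$ is a Lie-algebra automorphism it fixes $\Omega$ and commutes with $\CYB$, so both sides of the displayed equation are $r$-matrices with vanishing $\CYB$, and the computation behind Lemma \ref{Lelim} yields $\mu\bigl(\mu-(\alpha\otimes 1)\lambda\bigr)\,[\Omega_{12},\Omega_{13}]=0$. The one genuinely new point here is that $S\otimes S$ need not be an integral domain, so Lemma \ref{Lelim} cannot be quoted verbatim; instead I would use that $[\Omega_{12},\Omega_{13}]$ is defined over $\bbQ$ and nonzero in $\g_0^{\otimes 3}$, hence free over $R$, so that by flatness of $S\otimes S$ over $R$ it remains part of a basis and the scalar coefficient must vanish: $\mu\bigl(\mu-(\alpha\otimes 1)\lambda\bigr)=0$. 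Applying $\kappa$ to the displayed equation, adding, and using $r+\kappa(r)=\lambda\Omega$, $(\phi\otimes\phi)(\Omega)=\Omega$, together with the freeness of $\Omega$ over $R$ (Remark \ref{Omega}), pins $\mu$ down to $\mu=\tfrac{\lambda}{2}\bigl((\alpha\otimes 1)-(1\otimes\alpha)\bigr)$. Substituting this into the quadratic relation and cancelling the unit $\lambda$ leaves $\alpha^2\otimes 1=1\otimes\alpha^2$ in $S\otimes S$; by faithfully flat descent this forces $\alpha^2\in R$, and since $\alpha\in S^*$ and $R\to S$ is faithfully flat, in fact $\alpha^2\in R^*$.

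With $\mu$ known and $\alpha^2\in R^*$ established, I would feed $\mu$ back in and rewrite the displayed equation, via $\lambda\Omega=r+\kappa(r)$, in the form $(\phi\otimes\phi)(r)=e\,r+(1-e)\kappa(r)$, where $e=\tfrac12\bigl(1+\alpha\otimes\alpha^{-1}\bigr)$ is idempotent because $(\alpha\otimes\alpha^{-1})^2=\alpha^2\otimes\alpha^{-2}=1$, the last equality using $\alpha^2\in R$. The dichotomy of the statement then amounts to reading off this idempotent as one of its two ``pure'' values. If $\alpha\in R^*$ then $\alpha\otimes\alpha^{-1}=1$, so $e=1$ and $(\phi\otimes\phi)(r)=r$, which is case (1); the alternative is $\alpha\notin R^*$, and here the integral-domain hypothesis is exactly what is needed to separate the remaining possibility $e=0$, giving $(\phi\otimes\phi)(r)=\kappa(r)$ together with $\alpha^2\in R^*\setminus R^{*2}$, which is case (2). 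The two cases are mutually exclusive because $R$ is a domain. For the converse I would verify directly, as over fields, that each of (1) and (2) makes the displayed equation hold (with $\mu=0$, resp.\ $\mu=(\alpha\otimes 1)\lambda$), and then invoke the faithfully flat analogue of the unlabelled descent lemma used in the proof of Proposition \ref{Pfields} — that a Lie-coalgebra structure on $\g_S$ restricts to $\g_S^\theta$ precisely when it is compatible with $\theta$ — to conclude that $\alpha\delta_S$ genuinely descends to $\g'$.

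The hard part will be the case distinction in the last paragraph. Over a field this step is immediate, since a field is a domain and $\alpha^2\in K^{*2}$ instantly gives $\alpha\in K^*$; over a general integral domain $R$ with merely faithfully flat $S$ the idempotent $e$ lives in the possibly disconnected ring $S\otimes S$, and the work lies in showing that compatibility with a \emph{single} twisted form $\g'$ (a fixed $\phi$) pins $e$ to $0$ or $1$ rather than an intermediate idempotent, and in matching these two values to the clean alternatives $(\phi\otimes\phi)(r)=r$ and $(\phi\otimes\phi)(r)=\kappa(r)$. This is precisely where the descent identity $\alpha^2\otimes 1=1\otimes\alpha^2$, the freeness of $[\Omega_{12},\Omega_{13}]$ and $\Omega$ over $\bbQ$, and the hypothesis that $R$ is a domain are indispensable; the remaining bookkeeping — correctly placing the two coprojections of $\alpha$ and the cocycle $\phi$ in the compatibility equation, in the absence of an honest Galois group — is routine but delicate.
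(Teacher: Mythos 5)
The paper does not actually prove this proposition in the text: it states that the argument is analogous to that of Proposition \ref{Pfields} and defers the details to the preprint \cite{AP}. So your overall strategy --- transporting the Galois-descent proof to the faithfully flat setting, with $\phi$ replacing $(u_\gamma)$ and the two coprojections of $\alpha$ replacing $\alpha$ and $\gamma(\alpha)$ --- is exactly the intended one, and your derivation is sound up to and including the identity $(\phi\otimes\phi)(r)=e\,r+(1-e)\kappa(r)$ with $e=\tfrac12\bigl(1+\alpha\otimes\alpha^{-1}\bigr)$ idempotent and $\alpha^2\in R^*$.

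The gap is in the final case analysis, and it is genuine. First, $e=0$ is outright impossible: it would mean $\alpha\otimes 1+1\otimes\alpha=0$ in $S\otimes S$, and applying the multiplication map $S\otimes S\to S$ gives $2\alpha=0$, hence $\alpha=0$ since we are over $\bbQ$, contradicting $\alpha\in S^*$. Second, the hypothesis that $R$ is an integral domain gives no control whatsoever over idempotents of $S\otimes_R S$, which is essentially never a domain even when $R$ and $S$ are fields: in the motivating Galois case $L\otimes_K L\simeq\prod_\gamma L$ and $e$ is precisely the nontrivial idempotent supported on the components with $\gamma(\alpha)=\alpha$ (it cannot be $0$ because the component over the identity of the Galois group is always $1$). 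So when $\alpha\notin R^*$ the element $e$ is a genuinely intermediate idempotent, and your equation cannot be collapsed to $(\phi\otimes\phi)(r)=\kappa(r)$; indeed that literal identity combined with your equation yields $e\,(r-\kappa(r))=0$ and hence $e=0$, which we just ruled out, and for the same reason your converse verification of case (2) with $\mu=(\alpha\otimes1)\lambda$ does not go through as written. The correct descent criterion in the non-rational case is the mixed identity $(\phi\otimes\phi)(r)=e\,r+(1-e)\kappa(r)$ itself, which is exactly what condition \eqref{tau} of Proposition \ref{Pfields} expresses component by component; matching it to the displayed condition (2) requires interpreting that condition on the part of $S\otimes S$ cut out by the idempotent $1-e$ (equivalently, after multiplying through by $1-e$), not forcing $e$ to be $0$ or $1$. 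As it stands, the step ``the integral-domain hypothesis separates the remaining possibility $e=0$'' is the crux of the proof and it is false.
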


As in the field case, we can encode this in terms of twisted cohomologies. Given a split simple Lie algebra $g$ over an integral domain $R$, an $r$-matrix $r$ on $\g$, 
a faithfully flat $R$-ring $S$, and $\alpha\in S^*$, we set 
\[\overline Z^1:=\overline Z^1(S/R,\bAut(\g))=\{\phi\in Z^1(S/R,\bAut(\g))|(\phi\otimes\phi)(r)=\kappa(r)\},\]
where $Z^1(S/R,\bAut(\g))$ is the set of 1-cocycles on $\bAut(\g)$ (using the conventions of \cite[17.6]{W}). 
Thus $\overline Z^1\subset \bAut(\g)(S\otimes S)$. We then define an equivalence relation $\sim$ on $\overline Z^1$ by
\[\psi\sim\phi \Longleftrightarrow \exists \rho\in\bAut(\g,\partial r)(S): \psi=(\Id_\g\otimes\kappa)(\rho\otimes\Id_S)(\Id_\g\otimes\kappa)\phi(\rho\otimes\Id_S)^{-1}\]
and write $\overline H^1(S/R,\bAut(\g,\partial r))$ for the set of equivalence classes. 

\begin{Cor} The set of all $R$-Lie bialgebras that 
become isomorphic to $(\g_S,\alpha\partial r)$ over $S$ is in bijection with
\begin{enumerate}
 \item $H^1(S/R,\bAut(\g,\partial r))$, if $\alpha\in R^*$,
 \item $\overline H^1(S/R,\bAut(\g,\partial r))$, if $\alpha^2\in R^*\setminus R^{*2}$, and
 \item the empty set, otherwise.
\end{enumerate}
\end{Cor}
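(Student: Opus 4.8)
The plan is to follow the proof of Proposition~\ref{PGalois}, with the preceding (ring-theoretic) Proposition in place of Proposition~\ref{Pfields}, Corollary~\ref{Cbij} in place of the Galois-cohomological classification, and the categorical equivalence of Proposition~\ref{Pdescent} in place of the fixed-point description of twisted forms. The common starting point for all three cases is the Remark opening this section: every $R$-Lie bialgebra $(\g',\delta')$ with $(\g'_S,\delta'_S)\simeq(\g_S,\alpha\partial r)$ is $R$-isomorphic to the restriction of $\alpha\delta_S$ to $\g_S^\theta$ for some descent datum $\theta$ of Lie algebras, equivalently for some cocycle $\phi\in Z^1(S/R,\bAut(\g))$. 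Classifying these bialgebras up to $R$-isomorphism therefore reduces to deciding which cocycles $\phi$ allow $\alpha\delta_S$ to descend, and when two such cocycles produce $R$-isomorphic bialgebras.

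For (1) and (3) I would apply the preceding Proposition directly. If $\alpha\in R^*$ then $\alpha r$ is again an $r$-matrix and the descent condition reads $(\phi\otimes\phi)(r)=r$; since $\bAut(\g,\alpha\partial r)=\bAut(\g,\partial r)$ by Lemma~\ref{Laut}, the bialgebras in question are exactly the $S/R$-twisted forms of the $R$-Lie bialgebra $(\g,\alpha\partial r)$, and Corollary~\ref{Cbij} identifies their isomorphism classes with $H^1(S/R,\bAut(\g,\partial r))$. If $\alpha^2\notin R^*$ then neither alternative of the Proposition can be met, so $\alpha\delta_S$ descends to no twisted form of $\g$ and the set is empty.

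The content is in (2). When $\alpha^2\in R^*\setminus R^{*2}$, the preceding Proposition shows that $\alpha\delta_S$ descends to the form with cocycle $\phi$ precisely when $(\phi\otimes\phi)(r)=\kappa(r)$, i.e.\ when $\phi\in\overline Z^1$; together with the reduction above this gives a surjection from $\overline Z^1$ onto the set of $R$-isomorphism classes at hand. To see that it descends to a bijection from $\overline H^1(S/R,\bAut(\g,\partial r))$, I would run an isomorphism $(\g',\delta')\simeq(\g'',\delta'')$ of descended bialgebras through the equivalence $\fD$ of Proposition~\ref{Pdescent}: it corresponds to an isomorphism in $\widehat\LBi_R^S$ of the two objects $(\g_S,\alpha\partial r)$ equipped with the descent data attached to $\phi$ and $\psi$, namely an $S$-bialgebra automorphism $\rho$ of $(\g_S,\alpha\partial r)$ intertwining those data. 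Because $\alpha\in S^*$, Lemma~\ref{Laut} over $S$ forces $\rho$ to preserve $\partial r$, so $\rho\in\bAut(\g,\partial r)(S)$; and unwinding the compatibility square in the definition of a morphism with descent data rewrites the intertwining condition as $\psi=(\Id_\g\otimes\kappa)(\rho\otimes\Id_S)(\Id_\g\otimes\kappa)\phi(\rho\otimes\Id_S)^{-1}$, which is exactly the relation defining $\sim$. Both directions of (2) then follow from this identity, which is the one appearing in Corollary~\ref{Cbij} restricted to $\overline Z^1$ and to the gauge subgroup $\bAut(\g,\partial r)(S)$.

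I expect the one genuinely delicate point to be the bookkeeping in (2): checking that the restriction is legitimate on both sides, that is, that the cocycles admitting a descent are cut out exactly by the condition $(\phi\otimes\phi)(r)=\kappa(r)$ defining $\overline Z^1$, and that every gauge automorphism $\rho$ arising from an $R$-isomorphism automatically lies in the smaller group $\bAut(\g,\partial r)(S)$ rather than in all of $\bAut(\g)(S)$. This is the ring-theoretic analogue of the fixed-locus argument of Proposition~\ref{PGalois}, with the flip $\kappa$ on $S\otimes S$ playing the part that the Galois action plays over a field.
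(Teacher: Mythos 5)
Your proposal is correct and follows exactly the route the paper indicates: it transports the proof of Proposition~\ref{PGalois} to the ring setting, replacing Proposition~\ref{Pfields} by its ring-theoretic analogue, the fixed-locus description by the descent equivalence of Proposition~\ref{Pdescent}, and identifying the gauge group as $\bAut(\g,\partial r)(S)$ via the invertibility of $\alpha$ (the paper itself defers the details to its preprint, stating only that the argument is analogous to the field case). The one delicate point you flag -- that the intertwining automorphism $\rho$ lands in $\bAut(\g,\partial r)(S)$ and that the admissible cocycles are exactly those in $\overline Z^1$ -- is resolved just as you say.
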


\subsection{Interpreting Twisted Cohomologies}
The twisted cohomologies defined above can be interpreted as (ordinary) cohomologies of twisted groups. To begin with, we need to determine when the twisted cohomology sets are non-empty.
This is the content of the following. Throughout, we work over a field $K$.

\begin{Prp}\label{Ppi} Let $r_\bdr$ be a Belavin--Drinfeld $r$-matrix with associated admissible quadruple $(\Gamma_1,\Gamma_2,\tau,r_\h)$. 
Then $\overline H^1_\Gal(K,\bAut(\g,\partial r_\bdr))$ is non-empty if and only if there exists a diagram automorphism $\pi$ of $\Gamma$ satisfying
\begin{equation}\label{pi}
\begin{array}{lllll}
\pi(\Gamma_1)=\Gamma_2,&\pi(\Gamma_2)=\Gamma_1,&\pi\tau\pi^{-1}=\tau^{-1}&\text{and}& (\widehat\pi\otimes\widehat\pi)(r_\h)=r_\h^{21}.
\end{array}
\end{equation}
\end{Prp}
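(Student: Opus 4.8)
The plan is to reduce the nonemptiness of $\overline H^1_\Gal(K,\bAut(\g,\partial r_\bdr))$ to the \emph{existence} of a single cocycle in $\overline Z^1_\alpha$ for some $\alpha$ with $\alpha^2\in K^*\setminus K^{*2}$, and then to analyze what constraints on the combinatorial data $(\Gamma_1,\Gamma_2,\tau,r_\h)$ such a cocycle forces. Recall that a class in $\overline Z^1_\alpha$ is a cocycle $(u_\gamma)$ in $\bAut(\g)$ satisfying \eqref{tau}: namely $u_\gamma\otimes u_\gamma$ fixes $r_\bdr$ when $\gamma\in\Gal(K(\alpha))$ and sends $r_\bdr$ to $\kappa(r_\bdr)=r_\bdr^{21}$ when $\gamma(\alpha)=-\alpha$. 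The key observation is that the simplest such cocycle to seek is the one supported on the quadratic extension $K(\alpha)/K$: set $u_\gamma=\Id$ for $\gamma\in\Gal(K(\alpha))$ and let $u_{\gamma_\alpha}=\phi$ be a fixed automorphism, where $\gamma_\alpha$ is the nontrivial element of $\Gal(K(\alpha)/K)$. The cocycle condition then reduces to $\phi\,{}^{\gamma_\alpha}\phi=\Id$, and since $\phi$ will be chosen defined over $\bbQ$ (hence $\gamma$-fixed), this becomes $\phi^2=\Id$; the defining condition \eqref{tau} becomes precisely $(\phi\otimes\phi)(r_\bdr)=\kappa(r_\bdr)$. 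So the whole problem collapses to: \emph{does there exist an involution $\phi\in\bAut(\g)(K)$, definable over $\bbQ$, with $(\phi\otimes\phi)(r_\bdr)=\kappa(r_\bdr)$?}

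Next I would decode this last condition using the structure of $\bAut(\g)$. By the split exact sequence \eqref{exact}, write $\phi=\widehat\pi\,\Ad_g$ for a diagram automorphism $\pi\in\bA_\Gamma(K)$ and $g\in\bG(K)$. Applying $\phi\otimes\phi$ to $r_\bdr$ and matching against $\kappa(r_\bdr)=r_\h^{21}+\sum_{\alpha\in\Delta^+}X_{-\alpha}\otimes X_\alpha+\sum X_{-\kappa^k(\alpha)}\otimes X_\alpha$ proceeds exactly as in the proof of Lemma \ref{LKKPS}: comparing Cartan parts yields $(\widehat\pi\otimes\widehat\pi)(r_\h)=r_\h^{21}$, and comparing the root-vector terms forces, via the linear independence of the $X_\alpha$ and the fact that $\kappa$ swaps the two tensor slots, that $\pi$ must interchange $\Gamma_1$ and $\Gamma_2$ and satisfy $\pi\tau\pi^{-1}=\tau^{-1}$. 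This produces all four conditions of \eqref{pi}. Conversely, given a $\pi$ satisfying \eqref{pi}, I would reverse the computation: take $\phi=\widehat\pi\,\Ad_g$ and solve for a torus element $g\in\bH(K)$ making the root-vector coefficients match (the relevant $\lambda_{\alpha,k}(g)$-type equations are solvable over $K$ since $K$ carries a split torus of full rank), and then adjust $g$ so that $\phi$ is an involution, using that $\widehat\pi$ has order dividing $2$ by the relation $\pi\tau\pi^{-1}=\tau^{-1}$ together with $\pi$ swapping $\Gamma_1,\Gamma_2$.

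The main obstacle I anticipate is the \textbf{converse direction}, specifically arranging simultaneously that $(\phi\otimes\phi)(r_\bdr)=\kappa(r_\bdr)$ \emph{and} that $\phi$ is an involution defined over $\bbQ$. The matching of root-vector coefficients determines $g$ only up to the centralizer $\bC_\bdr$, and one must check that within this coset there is a choice making $\phi^2=\Id$; this is where the structure theory of the admissible triple and the precise normalization of the Chevalley generators (sign conventions in $[X_\alpha,X_\beta]$) enter, and where a careful bookkeeping of how $\widehat\pi$ permutes the $X_\alpha$ relative to $\kappa$ is required. I expect the condition $(\widehat\pi\otimes\widehat\pi)(r_\h)=r_\h^{21}$ to be exactly what guarantees consistency of the torus equations, so the real content is verifying that the diagram-automorphism constraints in \eqref{pi} are not merely necessary but also sufficient to build a genuine involutive cocycle. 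Once the existence of $\phi$ is secured, the cohomology class of the associated cocycle $(u_\gamma)$ is automatically a well-defined element of $\overline H^1_\Gal(K,\bAut(\g,\partial r_\bdr))$, establishing nonemptiness.
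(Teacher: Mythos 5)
Your overall strategy -- exhibit a single cocycle supported on the quadratic extension $K(\alpha)/K$ for sufficiency, and for necessity extract from any cocycle one automorphism $\phi$ with $(\phi\otimes\phi)(r_\bdr)=\kappa(r_\bdr)$ and decode it -- is the same as the paper's. But there is a genuine gap in the decoding and in the construction: you write $\phi=\widehat\pi\,\Ad_g$ and propose to solve for $g$ in the split torus $\bH$. Such a $\phi$ sends each $X_\alpha\otimes X_{-\alpha}$ to a scalar multiple of $X_{\pi(\alpha)}\otimes X_{-\pi(\alpha)}$, i.e.\ it preserves the ``positive $\otimes$ negative'' shape of the root part of $r_\bdr$, whereas $\kappa(r_\bdr)$ has root part $\sum_{\alpha\in\Delta^+}X_{-\alpha}\otimes X_\alpha$. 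No choice of torus element (indeed no coefficient bookkeeping) can fix this: the condition $(\phi\otimes\phi)(r_\bdr)=r_\bdr^{21}$ forces $\phi$ to \emph{swap} the Borel subalgebras $\bo^+$ and $\bo^-$ (it must exchange the eigenspaces of the semisimple part of $\Xi(r_\bdr)$ and of $\Xi(r_\bdr^{21})$), while $\widehat\pi\,\Ad_h$ with $h\in\bH$ preserves them. The missing ingredient is the Chevalley involution $\chi$ ($X_\alpha\mapsto -X_{-\alpha}$, $h\mapsto -h$): the paper constructs the cocycle from $\phi=\chi\widehat\pi$, and in the converse shows that any $\phi$ with $(\phi\otimes\phi)(r_\bdr)=r_\bdr^{21}$ has the form $\chi\,\Ad_h\,\widehat\pi$ with $h\in\bH(\Kb)$ (reducing first to $r_\drj$ via the semisimple-part argument of \cite{KKPS}), after which the comparison of Cartan and root terms yields the four conditions in \eqref{pi}.

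A second, smaller gap: your claim that the relations in \eqref{pi} force $\widehat\pi$ to have order dividing $2$ is false. In type $\mathrm{D}_4$ with $\Gamma_1=\Gamma_2=\emptyset$ the first three conditions are vacuous and a triality automorphism of order $3$ can satisfy \eqref{pi}; the paper needs a separate lemma (Lemma \ref{Lpi}) showing that in this exceptional case $r_\bdr=r_\drj$ and $\pi=\Id$ also works, so that one may always \emph{replace} the given $\pi$ by one of order at most $2$ before forming the involution $\phi=\chi\widehat\pi$. Without this replacement your candidate $u_{\gamma_\alpha}=\phi$ need not satisfy the cocycle condition $\phi^2=\Id$.
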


The following lemma will be helpful.

\begin{Lma}\label{Lpi} Let $r_\bdr$ be the Belavin--Drinfeld $r$-matrix with associated admissible quadruple $(\Gamma_1,\Gamma_2,\tau,r_\h)$. If there exists a diagram automorphism $\pi$
satisfying \eqref{pi}, then there exists a diagram automorphism $\pi'$ of order 1 or 2 satisfying \eqref{pi}. More specifically, either $\pi$ itself is of order 1 or 2, or $\g$ is of
type $\mathrm{D}_4$ and $r_\bdr=r_\drj$, in which case $\Id$ satisfies \eqref{pi}. 
\end{Lma}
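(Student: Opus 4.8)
The plan is to reduce everything to a careful analysis of the Dynkin diagram automorphism group $\Aut(\Gamma)$, exploiting that the existence of an element of order $>2$ is extremely restrictive. First I would recall the classification of $\Aut(\Gamma)$ for simple root systems: for every type other than $\mathrm{D}_4$ the group $\Aut(\Gamma)$ has exponent at most $2$ (it is trivial for types $\mathrm{A}_1$, $\mathrm{B}_n$, $\mathrm{C}_n$, $\mathrm{E}_7$, $\mathrm{E}_8$, $\mathrm{F}_4$, $\mathrm{G}_2$, and cyclic of order $2$ for types $\mathrm{A}_n$ with $n\geq2$, $\mathrm{D}_n$ with $n\geq5$, and $\mathrm{E}_6$), whereas for $\mathrm{D}_4$ one has $\Aut(\Gamma)\cong S_3$, whose non-identity elements have order $2$ or $3$. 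Consequently, if $\g$ is not of type $\mathrm{D}_4$, then any $\pi$ satisfying \eqref{pi} automatically has order $1$ or $2$, and I simply take $\pi'=\pi$; the same choice works whenever the given $\pi$ already has order $\leq 2$. This leaves a single case to treat: $\g$ of type $\mathrm{D}_4$ and $\pi$ of order exactly $3$.

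In that case I would first show $\Gamma_1=\Gamma_2=\emptyset$. Applying $\pi$ repeatedly to $\pi(\Gamma_1)=\Gamma_2$ and using $\pi(\Gamma_2)=\Gamma_1$ gives $\pi^2(\Gamma_1)=\Gamma_1$ and $\pi^3(\Gamma_1)=\Gamma_2$; since $\pi^3=\Id$, this forces $\Gamma_1=\Gamma_2$. But then $\tau$ is an isometry of $\Gamma_1$ onto itself, so every orbit $\{\tau^k(\alpha)\}_{k\geq0}$ stays inside $\Gamma_1$, contradicting the admissibility requirement that some $\tau^k(\alpha)$ leave $\Gamma_1$ — unless $\Gamma_1$ is empty. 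Hence $\Gamma_1=\Gamma_2=\emptyset$ and $\tau=\Id_\emptyset$.

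It remains to pin down $r_\h$. Writing $P=\widehat\pi$ and using that the splitting $s$ is a group homomorphism, $P^3=\Id$, so $(P\otimes P)^3=\Id$ on $\h\otimes\h$; moreover $\kappa$ commutes with $P\otimes P$. Starting from the last condition in \eqref{pi}, which reads $(P\otimes P)(r_\h)=\kappa(r_\h)$, I apply $P\otimes P$ once to get $(P\otimes P)^2(r_\h)=(P\otimes P)\kappa(r_\h)=\kappa(P\otimes P)(r_\h)=\kappa^2(r_\h)=r_\h$, and once more to get $(P\otimes P)^3(r_\h)=(P\otimes P)(r_\h)=\kappa(r_\h)$; since $(P\otimes P)^3=\Id$ the left-hand side is $r_\h$, whence $r_\h=\kappa(r_\h)$. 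Combined with the defining relation $r_\h+\kappa(r_\h)=\Omega_\h$ of an admissible quadruple, this yields $2r_\h=\Omega_\h$, i.e.\ $r_\h=\tfrac12\Omega_\h$. Thus the quadruple is $(\emptyset,\emptyset,\Id_\emptyset,\tfrac12\Omega_\h)$ and $r_\bdr=r_\drj$. Finally I would check that $\Id$ satisfies \eqref{pi} for $r_\drj$: the first three conditions are trivially met, and the fourth reduces to $\Omega_\h=\kappa(\Omega_\h)$, which holds because $\Omega_\h=\sum_i h_i\otimes h_i$ is symmetric. Taking $\pi'=\Id$ completes the argument.

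The main obstacle I anticipate is the $\mathrm{D}_4$ order-$3$ case, and within it the step forcing $\Gamma_1=\Gamma_2=\emptyset$: one must combine the set-level constraints of \eqref{pi} with the admissibility condition correctly, and then carry out the short cyclic computation on $r_\h$ using $\pi^3=\Id$ and the commutation of $\kappa$ with $\widehat\pi\otimes\widehat\pi$. Everything else is bookkeeping about diagram automorphism groups.
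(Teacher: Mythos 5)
Your proof is correct, and its overall skeleton matches the paper's: reduce to type $\mathrm{D}_4$ with $\pi$ of order $3$, then run the cyclic computation $(\widehat\pi\otimes\widehat\pi)(r_\h)=\kappa(r_\h)\Rightarrow(\widehat\pi^2\otimes\widehat\pi^2)(r_\h)=r_\h\Rightarrow r_\h=\kappa(r_\h)$ using $\pi^3=\Id$ and the commutation of $\kappa$ with $\widehat\pi\otimes\widehat\pi$, and conclude $r_\h=\tfrac12\Omega_\h$ from $r_\h+\kappa(r_\h)=\Omega_\h$. The one place where you genuinely improve on the paper is the treatment of $\mathrm{D}_4$ with $\Gamma_1\neq\emptyset$: the paper disposes of this by asserting that a case-by-case inspection of all admissible triples shows any $\pi$ satisfying \eqref{pi} must have $\pi^2=\Id$, whereas you avoid the enumeration entirely by observing that an order-$3$ automorphism swapping $\Gamma_1$ and $\Gamma_2$ forces $\Gamma_1=\pi^3(\Gamma_1)=\pi(\Gamma_1)=\Gamma_2$, after which $\tau$ would be a permutation of the finite set $\Gamma_1$ whose orbits never leave $\Gamma_1$, contradicting admissibility unless $\Gamma_1=\emptyset$. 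This is the contrapositive of the paper's claim, proved conceptually rather than by inspection, and it also makes clear that the condition $\pi\tau\pi^{-1}=\tau^{-1}$ is not needed for this step. The remaining verifications (the exponent of $\Aut(\Gamma)$ outside type $\mathrm{D}_4$, and that $\Id$ satisfies \eqref{pi} for $r_\drj$ because $\Omega_\h$ is symmetric) agree with the paper.
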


\begin{proof} This is immediate if $\g$ is not of type $\mathrm{D}_4$. If $\g$ is of type $\mathrm{D}_4$ and $\Gamma_1\neq\emptyset$, then it is straight forward to check, case by case,
that for all admissible $\Gamma_1$, $\Gamma_2$ and $\tau$, a diagram automorphism satisfying \eqref{pi} must satisfy $\pi^2=\Id$. If $\Gamma_1=\Gamma_2=\emptyset$ and $\pi^2\neq\Id$,
then $\pi^3=\Id$. But then the condition $(\widehat\pi\otimes\widehat\pi)(r_\h)=r_\h^{21}$ implies that $(\widehat\pi^2\otimes\widehat\pi^2)(r_\h)=r_\h$, whence
\[r_\h=(\widehat\pi^3\otimes\widehat\pi^3)(r_\h)=(\widehat\pi\otimes\widehat\pi)(r_\h)=r_\h^{21}.\]
Thus since $r_\h+r_\h^{21}=\Omega_\h$, this implies that $r_\h=\frac{1}{2}\Omega_\h$, whence $r_\bdr=r_\drj$. In that case $\Id$ satisfies \eqref{pi}, and the proof is complete.           
 
\end{proof}

\begin{proof}[Proof of Proposition \ref{Ppi}] Assume such an element $\pi$ exists. By the above lemma, we may assume $\pi^2=\Id$. Let $\chi$ be the Chevalley automorphism of $\g$
and set $\phi=\chi\pi$, which we view as an element of $\bAut(\g)(\Kb)$. We claim that 
\[\phi\in\overline Z^1(K,\bAut(\g,\partial r_\bdr),\alpha).\] 
We have $\chi(h)=-h$ for any $h\in\h$, whence $(\chi\otimes\chi)(r_\h)=r_\h$. Moreover $\pi$ permutes all $\alpha\in\Delta^+$, and $\pi\tau^k=\tau^{-k}\pi$, whence
\[(\phi\otimes\phi)(r_\bdr)=r_\h^{21}+\sum_{\alpha\in\Delta^+}X_{-\alpha}\otimes X_\alpha+\sum_{\substack{\alpha\in\mathrm{Span}_\bbZ(\Gamma_1)^+\\ k>0}}X_{-\pi(\alpha)}\wedge X_{\tau^{-k}\pi(\alpha)}.\]
For fixed $\alpha$ and $k$, setting $\alpha'=\tau^{-k}\pi(\alpha)$, the term $X_{-\pi(\alpha)}\wedge X_{\tau^{-k}\pi(\alpha)}$ becomes $X_{-\tau^k(\alpha')}\wedge X_{\alpha'}$. Summing over $\alpha'$ and $k$, we thus get
\[(\phi\otimes\phi)(r_\bdr)=r_\h^{21}+\sum_{\alpha\in\Delta^+}X_{-\alpha}\otimes X_\alpha+\sum_{\substack{\alpha'\in\mathrm{Span}_\bbZ(\Gamma_1)^+\\ k>0}}X_{-\tau^k(\alpha')}\wedge X_{\alpha'},\]
which is equal to $r_\bdr^{21}$. Since $\chi$ and $\pi$ commute and are of order at most two, we have $\phi^2=1$. Since $\phi$ is stable under the $\Gal(K)$-action, we get an 
element $u_\gamma$ of $Z^1(K,\bAut(\g))$ by setting
\[
u_\gamma=\left\{\begin{array}{ll}
                                         \Id & \text{if\ } \gamma\in\Gal(K(\alpha))\\
                                         \phi & \text{if\ } \gamma(\alpha)=-\alpha.
                                        \end{array}\right.\]
By construction, this element belongs to $\overline Z^1(K,\bAut(\g,\partial r_\bdr),\alpha)$.

Conversely, assume that $\overline Z^1(K,\bAut(\g,\partial r_\bdr),\alpha)$ is non-empty. Then in particular there exists $\phi\in\bAut(\g,\partial r_\bdr)(\Kb)$ satisfying 
$(\phi\otimes\phi)(r_\bdr)=r_\bdr^{21}$. Arguing as in \ref{LKKPS}, we conclude that any such $\phi$ must map the semisimple part of $\Xi(r_\bdr)$ to the semisimple part of $\Xi(r_\bdr^{21})$.
By \cite{KKPS}, the semisimple part of $\Xi(r_\bdr)$ differs from $\Xi(r_\drj)$ only by an element in $\Xi(\h\otimes\h)$, and thus likewise for $\Xi(r_\bdr^{21})$. Thus we have
$(\phi\otimes\phi)(r_\drj)=r_\drj^{21}$, and thence $\phi=\chi\Ad_h\pi$ for some $h\in\bH(\Kb)$ and a diagram automorphism $\pi$. Thus
\[(\pi\otimes\pi)(r_\bdr)=\Ad_{h^{-1}}\chi(r_\bdr^{21}).\]
This implies $(\pi\otimes\pi)(r_\h)=r_\h^{21}$ (since both $\Ad_h$ and $\chi$ leave $\h\otimes\h$ fixed), and
\[\sum_{\substack{\alpha\in\mathrm{Span}_\bbZ(\Gamma_1)^+\\ k>0}}X_{\pi(\alpha)}\wedge X_{-\pi\tau^k(\alpha)}=\sum_{\substack{\beta\in\mathrm{Span}_\bbZ(\Gamma_1)^+\\ l>0}}\lambda_{\beta,l}X_{\tau^l(\beta)}\wedge X_{-\beta},\]
for non-zero scalars $\lambda_{\beta,l}$, which therefore have to equal 1. Thus $(\pi\otimes\pi)(r_\bdr)=\chi(r_\bdr^{21})$, and $h^{-1}\in\bC(\chi(r_\bdr^{21}))$, which implies that
$h\in\bC(r_\bdr)$. Similarly to the case in the proof of Lemma \ref{LKKPS} this implies that $\pi(\Gamma_1)=\Gamma_2$ and $\pi(\Gamma_2)=\Gamma_1$. Relabeling the terms, the equality becomes
\[\sum_{\substack{\alpha\in\mathrm{Span}_\bbZ(\Gamma_1)^+\\ k>0}}X_{\pi(\alpha)}\wedge X_{-\pi\tau^k(\alpha)}=\sum_{\substack{\beta\in\mathrm{Span}_\bbZ(\Gamma_1)^+\\ l>0}}X_{\beta}\wedge X_{-\tau^{-l}(\beta)},\]
Proceeding again as in the proof of Lemma \ref{LKKPS}, this implies that for each $\alpha\in\Gamma_1$ we have the equality of sets
\[\{\pi\tau(\alpha),\ldots,\pi\tau^{l_\alpha}(\alpha)\}=\{\tau^{-1}\pi(\alpha),\ldots,\tau^{-l_\alpha}\pi(\alpha)\},\]
where $l_\alpha$ is defined as in the proof of Lemma \ref{LKKPS}. As there, we proceed by induction on $l_\alpha$, the case $l_\alpha=1$ being clear. 
For $l_\alpha>1$, since $l_{\tau(\alpha)}=l_\alpha-1$, by the induction hypothesis the left hand side is equal to
\[\{\pi\tau(\alpha),\tau^{-1}\pi\tau(\alpha),\ldots,\tau^{-1}\pi\tau^{l_\alpha-1}(\alpha)\}.\]
Thus $\tau^{-1}\pi(\alpha)=\pi\tau(\alpha)$, since by admissibility $\tau^{-1}\pi(\alpha)\neq\tau^{-1}\pi\tau^j(\alpha)$ for any $j>0$. This completes the proof.
\end{proof}

We are now ready to re-interpret $\overline H^1(K,\bAut(\g,\partial r_\bdr))$ whenever it is non-empty. 
Recall that in these cases there exists a diagram automorphism $\pi$ of order at most two satisfying \eqref{pi}. Recall also that we are in the situation where $K\subset K(\alpha)\subseteq\Kb$,
with $\alpha^2\in K$. 

Set $\bA_\bdr=\bAut(\g,\partial r_\bdr)$. We have a map $v=v_\pi:\Gal(K)\to \bAut(\bA_\bdr)(\Kb)$ defined as the identity on $\Gal(K(\alpha))$ and by mapping
$\gamma_\alpha$ to $\rho\mapsto \chi\pi\rho(\chi\pi)^{-1}=\chi\pi\rho\pi\chi$: this is well defined since by the above, $\chi\pi(r_\bdr)=r_\bdr^{21}$, so
\[(\chi\pi\rho(\chi\pi)^{-1}\otimes\chi\pi\rho(\chi\pi)^{-1})(r_\bdr)=r_\bdr.\]
Since $\Gal(K)$ acts trivially on $\rho\mapsto \chi\pi\rho(\chi\pi)^{-1}$, and since the map is of order 2, it is a cocycle, and, following \cite{KPS} we may consider the twisted group
$(\bA_\bdr)_v$. The $\Gal(K)$-action defining the cocycle set $Z^1_\Gal(K,(\bA_\bdr)_v)$ and the cohomology $H^1_\Gal(K,(\bA_\bdr)_v)$ is given by
\[\gamma\cdot\rho=v(\gamma)(^\gamma\rho).\]

\begin{Thm}\label{Ttwisted} Let $r_\bdr$ be a Belavin--Drinfeld $r$-matrix with associated admissible quadruple $(\Gamma_1,\Gamma_2,\tau,r_\h)$ such that 
$\overline H^1(K,\bA_\bdr)$ is non-empty, and let $\pi$ be any diagram automorphism of $\Gamma$ of order at most two satisfying \eqref{pi}. Then the map
\[\overline Z^1(K,\bA_\bdr,\alpha)\to Z^1_\Gal(K,(\bA_\bdr)_v)\]
defined by mapping the cocycle $(u_\gamma)$ to the cocycle $\widehat u_\gamma$ defined by
\[\widehat u_\gamma=\left\{\begin{array}{ll}
                                         u_\gamma & \text{if\ } \gamma\in\Gal(K(\alpha))\\
                                         u_\gamma\pi\chi & \text{if\ } \gamma(\alpha)=-\alpha,
                                        \end{array}\right.\]
induces an injective map
\[\overline H^1(K,\bA_\bdr)\to H^1_\Gal(K,(\bA_\bdr)_v),\]
where $v=v_\pi$ is constructed as above.
\end{Thm}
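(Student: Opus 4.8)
The plan is to check that the assignment $(u_\gamma)\mapsto(\widehat u_\gamma)$ is a well-defined map of cocycle sets and then to verify that it intertwines the two equivalence relations, which yields simultaneously that it descends to classes and that the induced map is injective. Throughout I write $\phi=\chi\pi$ and abbreviate $H=\Gal(K(\alpha))$, an index-two subgroup of $\Gamma=\Gal(K)$. As recalled in the proof of Proposition \ref{Ppi}, $\phi$ is defined over $\bbQ$ (so ${}^\gamma\phi=\phi$ for all $\gamma$), satisfies $\phi^2=\Id$ because $\chi$ and $\pi$ commute and each has order at most two, and obeys $(\phi\otimes\phi)(r_\bdr)=\kappa(r_\bdr)$. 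Since $\pi\chi=\chi\pi=\phi$, the definition of $\widehat u_\gamma$ reads $\widehat u_\gamma=u_\gamma$ for $\gamma\in H$ and $\widehat u_\gamma=u_\gamma\phi$ for $\gamma(\alpha)=-\alpha$; I will also use repeatedly that $v(\gamma)$ is the identity on $H$ and the inner automorphism $\rho\mapsto\phi\rho\phi$ off $H$.

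First I would confirm that each $\widehat u_\gamma$ lies in $\bA_\bdr(\Kb)$, so that the target makes sense. For $\gamma\in H$ this is immediate from the first line of \eqref{tau} together with Lemma \ref{Laut}. For $\gamma(\alpha)=-\alpha$ I compute $(\widehat u_\gamma\otimes\widehat u_\gamma)(r_\bdr)=(u_\gamma\otimes u_\gamma)(\phi\otimes\phi)(r_\bdr)=(u_\gamma\otimes u_\gamma)(\kappa(r_\bdr))$, and applying $\kappa$ to the second line of \eqref{tau}---using that $\kappa$ commutes with $u_\gamma\otimes u_\gamma$---shows this equals $r_\bdr$; Lemma \ref{Laut} again gives $\widehat u_\gamma\in\bA_\bdr$.

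Next I would establish the twisted cocycle identity $\widehat u_{\gamma\delta}=\widehat u_\gamma\,v(\gamma)({}^\gamma\widehat u_\delta)$ by a four-case split on the membership of $\gamma$ and $\delta$ in $H$ (noting that $\gamma\delta\in H$ exactly when $\gamma,\delta$ lie on the same side). In every case the identity reduces to the ordinary cocycle relation $u_{\gamma\delta}=u_\gamma({}^\gamma u_\delta)$ satisfied by $(u_\gamma)$ in $\bAut(\g)$: the factors of $\phi$ introduced by $v(\gamma)$ and by the definition of $\widehat u_\gamma$ cancel using $\phi^2=\Id$, while ${}^\gamma\phi=\phi$ lets the Galois action pass through $\phi$. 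This shows the map lands in $Z^1_\Gal(K,(\bA_\bdr)_v)$.

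Finally, for a fixed $\rho\in\bA_\bdr(\Kb)$ I would show the equivalence of the relation $v_\gamma=\rho^{-1}u_\gamma({}^\gamma\rho)$ for all $\gamma$ (the relation \eqref{equiv} defining $\sim$) with $\widehat v_\gamma=\rho^{-1}\widehat u_\gamma\,v(\gamma)({}^\gamma\rho)$ for all $\gamma$ (cohomologousness in the twisted group). For $\gamma\in H$ the two relations coincide verbatim; for $\gamma(\alpha)=-\alpha$ the extra factor $\phi$ on each of $\widehat u_\gamma,\widehat v_\gamma$ and the two factors of $\phi$ supplied by $v(\gamma)({}^\gamma\rho)=\phi({}^\gamma\rho)\phi$ cancel via $\phi^2=\Id$, leaving the untwisted relation after deleting a common right factor of $\phi$. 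The forward implication yields that the induced map on classes is well defined, and the reverse implication yields its injectivity, completing the proof. I expect the only genuine work---and hence the main obstacle---to be this interlocking bookkeeping between the twist $\phi$, the Galois action, and the inner automorphism $v(\gamma)$; all the representation-theoretic substance is already packaged in the three properties of $\phi$ borrowed from Proposition \ref{Ppi}, so no further geometric input is required.
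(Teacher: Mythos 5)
Your proposal is correct and follows essentially the same route as the paper's proof: verify that each $\widehat u_\gamma$ lands in $\bA_\bdr(\Kb)$ (automatic on $\Gal(K(\alpha))$, and via $(\phi\otimes\phi)(r_\bdr)=\kappa(r_\bdr)$ off it), check the twisted cocycle identity by the four-case split, and then show that the relation \eqref{equiv} holds for $(u_\gamma),(w_\gamma)$ with a given $\rho$ if and only if the twisted cohomologousness relation holds for $(\widehat u_\gamma),(\widehat w_\gamma)$ with the same $\rho$, which gives well-definedness and injectivity at once. The only difference is that you spell out the $\kappa$-computation for membership in $\bA_\bdr$ where the paper simply cites the proof of Proposition \ref{Ppi}.
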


\begin{proof} First we show that the map is well-defined. If $(u_\gamma)\in\overline Z^1(K,\bA_\bdr,\alpha)$ we need to show that $(\widehat u_\gamma)$ is in $\bA_\bdr(\Kb)$ and satisfies the twisted cocycle condition
\[\widehat u_{\gamma_1\gamma_2}=\widehat u_{\gamma_1}v(\gamma_1)(^{\gamma_1} \widehat u_{\gamma_2}).\]
To show that each $\widehat u_\gamma$ is an automorphism we need to show that $(\widehat u_\gamma\otimes\widehat u_\gamma)(r_\bdr)=r_\bdr$. This is automatic whenever $\gamma(\alpha)=\alpha$, and holds when $\gamma(\alpha)=-\alpha$ by the proof of Proposition \ref{Ppi}.
The twisted cocycle condition follows from the cocycle condition on $(u_\gamma)$ by a direct computation in each of
the four cases $(\gamma_1(\alpha),\gamma_2(\alpha))=(\pm\alpha,\pm\alpha)$. Thus the map is well defined. To show that it induces an injective map on cohomology, we need show that
\[(u_\gamma)\sim(w_\gamma) \Longleftrightarrow (\widehat u_\gamma)\sim(\widehat w_\gamma),\]
where the equivalence relations are in the respective cohomology sets. For $\gamma\in\Gal(K(\alpha))$, this holds by definition. For those $\gamma\in\Gal(K)$ with
$\gamma(\alpha)=-\alpha$, the right hand equivalence amounts to
\[w_\gamma\pi\chi=\rho^{-1}u_\gamma\pi\chi\chi\pi\phantom{}^\gamma\rho\pi\chi\]
which is equivalent to
\[w_\gamma=\rho^{-1}u_\gamma\phantom{}^\gamma\rho,\]
which is precisely what the left hand equivalence amounts to. This completes the proof.
\end{proof}

\section{Previous Results Revisited}
In the light of the above, we will now review those results obtained in \cite{PS} (for split Lie algebras) and \cite{AS} (for a class of non-split Lie algebras) which are concerned with Drinfeld--Jimbo Lie bialgebra structures. Throughout, we work over a field $K$ of characteristic zero and consider($\Kb/K$)-twisted forms. We begin by the following consequence of Theorem \ref{Tmain}.

\begin{Prp}\label{Pinj} Let $\g$ be a split simple Lie algebra over $K$, and let $\g'$ be a twisted form of $\g$. Then there is, up to $K$-isomorphism, at most one Lie bialgebra structure $\delta'$ on $\g'$ such that $(\g',\delta')$ is a twisted form of the standard Lie bialgebra structure on $\g$. 
\end{Prp}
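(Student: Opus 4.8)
The plan is to translate the uniqueness statement into an injectivity statement at the level of Galois cohomology and then invoke Theorem \ref{Tmain}. Let $(\g, \dr_\drj)$ be the standard Lie bialgebra on the split Lie algebra $\g$, and let $\bA = \bAut((\g,\dr_\drj))$ be its automorphism group scheme. By Corollary \ref{Cbij} (in its Galois-cohomology incarnation over the field $K$), the $R$-isomorphism classes of $\Kb/K$-twisted forms of $(\g,\dr_\drj)$ are classified by $H^1_\Gal(K,\bA)$. The twisted \emph{Lie algebra} $\g'$, on the other hand, corresponds to a class in $H^1_\Gal(K,\bAut(\g))$, and the claim is that the fiber of the forgetful map over the class of $\g'$ contains at most one element.

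First I would set up the forgetful map of pointed sets. Forgetting the cobracket gives a natural transformation $\bA \to \bAut(\g)$, which by Corollary \ref{Caut} is precisely the inclusion $\bA \hookrightarrow \bAut(\g)$ sitting in the split exact sequence \eqref{exactdj}; composing with the quotient map $f:\bAut(\g)\to\bA_\Gamma$ recovers the map $f$ of \eqref{exactdj}. The key observation is that two Lie bialgebra structures $\delta'$ and $\delta''$ on the \emph{same} twisted form $\g'$ give rise to classes $c', c'' \in H^1_\Gal(K,\bA)$ whose images in $H^1_\Gal(K,\bAut(\g))$ coincide (both equal the class of $\g'$). A fortiori their images under $f^*: H^1_\Gal(K,\bA)\to H^1_\Gal(K,\bA_\Gamma)$ coincide, since $f$ factors through $\bAut(\g)$. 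But Theorem \ref{Tmain} asserts that $f^*$ is a \emph{bijection}; hence $c' = c''$, which means $(\g',\delta')\simeq(\g',\delta'')$ as $K$-Lie bialgebras.

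The main subtlety — and the step I would be most careful about — is the passage from ``same underlying twisted form $\g'$'' to ``same class in $H^1_\Gal(K,\bAut(\g))$,'' and the compatibility of the cohomology maps induced by the inclusion $\bA \hookrightarrow \bAut(\g)$ with the map $f^*$ of Theorem \ref{Tmain}. Concretely, one must check that the diagram of induced maps on $H^1$ commutes, i.e.\ that the composite $H^1_\Gal(K,\bA)\to H^1_\Gal(K,\bAut(\g))\to H^1_\Gal(K,\bA_\Gamma)$ agrees with $f^*$; this follows from functoriality of $H^1$ applied to the factorization $f = (\text{quotient})\circ(\text{inclusion})$ at the level of group schemes, which is exactly how $f$ arises in \eqref{exactdj}. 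Once commutativity is in hand, injectivity of the fiber is immediate from the injectivity half of Theorem \ref{Tmain} (indeed, from the full bijectivity), and no case analysis on the type of $\g$ is needed. I would close by remarking that this gives uniqueness only up to $K$-isomorphism of the pair $(\g',\delta')$, which is all that is claimed.
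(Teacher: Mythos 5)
Your proposal is correct and is essentially the paper's own argument: the paper likewise observes that the bijection $f^*$ of Theorem \ref{Tmain} factors through the map $i^*\colon H^1_\Gal(K,\bA)\to H^1_\Gal(K,\bAut(\g))$ induced by the inclusion, whence $i^*$ is injective and at most one bialgebra structure on $\g'$ can be a twisted form of the standard one. The functoriality point you flag as the main subtlety is handled in the paper exactly as you suggest, by noting that $f$ arises from the factorization in \eqref{exactdj}.
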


\begin{proof} The inclusion $i:\bA\to\bAut(\g)$ induces a map 
\[i^*: H^1_\Gal(K,\bA)\to H^1_\Gal(K,\bAut(\g)).\] 
Now the first (resp.\ second) of these cohomology sets classifies those Lie bialgebras (resp.\ Lie algebras) that are twisted forms of $(\g,\partial r_\drj)$ (resp. of $\g$), and the map $i^*$ corresponds to sending the isomorphism class of a Lie bialgebra to the isomorphism class of the underlying Lie algebra. We thus need show that $i^*$ is injective. But by construction, the isomorphism
$f^*: H^1_\Gal(K,\bA)\overset{\sim}{\longrightarrow} H^1_\Gal(K,\bA_{\Gamma})$ of Theorem \ref{Tmain} factors through $i^*$, which therefore is injective.
\end{proof}

\begin{Rk} Note that in general, the map $i^*$ is not surjective, meaning that there exist twisted forms $\g'$ of $\g$ which admit no Lie bialgebra structure that is a twisted form of the standard structure on $\g$. By corollary \ref{Ctrivial}, this is in particular the case whenever $\g'$ is non-split of type $\mathrm{A}_1$, $\mathrm{B}_n$, $\mathrm{C}_n$, $\mathrm{E}_7$, $\mathrm{E}_8$, $\mathrm{F}_4$ or $\mathrm{G}_2$.
\end{Rk}

\begin{Rk} In \cite{PS}, this was proved in the special case $\g'=\g$, by formulating the problem in terms of Galois cohomology of split tori and using Steinberg's theorem. This essentially corresponds to considering Lie bialgebra structures on $\g$ up to those isomorphisms that are inner automorphisms of $\g$. 
\end{Rk}

\begin{Rk} In \cite{AS}, the authors studied the classification of Lie bialgebra structures on certain non-split Lie algebras of type $\mathrm{A}_n$, up to equivalence by certain natural gauge groups. More precisely, the authors considered special unitary Lie algebras under the action of unitary groups with respect to a non-square $d\in K$. Curiously, for twisted forms of the standard structure, they showed that if such twisted forms exist, there exists a unique equivalence class if $n$ is odd, but  if $n$ is even, the equivalence classes are parametrized by $K/N(K(\sqrt{d}))$. This does not contradict the above result, since the unitary group is not adjoint. It is rather straight forward to check that if one uses the corresponding adjoint group in the calculations of \cite{AS}, one obtains uniqueness in all cases. 
\end{Rk}

The main part of \cite{AS} is concerned with Lie bialgebras that upon extension to $\Kb$ become equivalent to $\alpha\partial r_\drj$ for some $\alpha\in \Kb^*\setminus K^*$ with $d:=\alpha^2\in K^*$. 

To review these results, let $K$ be a field of characteristic zero admitting a quadratic extension $L=K(\sqrt d)\supsetneq K$ with $d\in K^*$. Set $\g=\sll_n
(K)$ and consider the Lie algebra
\[\g'=\su_n(K,d)=\{x\in\sll_n(L)|\overline x^t=-x\},\]
where $x\mapsto \overline x$ is the linear map induced by the non-trivial Galois automorphism of $L/K$.

Note that $\g'_{L}\simeq\g_{L}=\sll_n(L)$. In the sequel we fix an algebraic closure $\Kb$ of $K$ containing $L$, so that $\g'_{\Kb}\simeq\sll
_n(\Kb)$. As a direct consequence of Proposition \ref{Pfields}, we obtain the following result from \cite{AS}.

\begin{Cor} Let $\alpha\in \Kb^*$. If the Lie bialgebra structure $\alpha\partial r_\drj$ on $\sll_n(\Kb)$ descends to $\g'$, then $\alpha^2\in K
^*$. 
\end{Cor}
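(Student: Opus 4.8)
The plan is to derive this as a direct application of Proposition \ref{Pfields} to the specific twisted form $\g' = \su_n(K,d)$. The key is that $\g'$ is the twisted form of $\g = \sll_n(K)$ determined by a very concrete Galois cocycle, and Proposition \ref{Pfields} tells us that for $\alpha\partial r_\drj$ to descend to $\g'$, the scalar $\alpha$ must satisfy either $\alpha \in K^*$ or $\alpha^2 \in K^* \setminus K^{*2}$. In both of these mutually exclusive cases one has $\alpha^2 \in K^*$, so the conclusion follows immediately once we verify that the hypotheses of Proposition \ref{Pfields} are met.

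First I would identify the cocycle $(u_\gamma)$ corresponding to the twisted form $\g'=\su_n(K,d)$. Since $\g'_L \simeq \g_L$ and $\g'$ is split by the quadratic extension $L = K(\sqrt d)$, the cocycle factors through $\Gal(L/K)$: we have $u_\gamma = \Id$ for $\gamma \in \Gal(L)$, and $u_{\gamma_0}$ is the nontrivial automorphism of $\sll_n(\Kb)$ realizing the unitary involution (essentially $x \mapsto -\overline{x}^t$ in suitable coordinates) for $\gamma_0$ the nontrivial element of $\Gal(L/K)$. This is a $\Gal(K)$-cocycle in $\bAut(\g)$ valued in $\bGL(\g)$, with fixed locus $\g'$.

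Then I would simply invoke Proposition \ref{Pfields} with $r = r_\drj$ and $\lambda = 1$ (since $r_\drj + \kappa(r_\drj) = \Omega$, so $\lambda \in K^*$). The proposition asserts that $\alpha\overline\delta$ descends to $\g'$ \emph{only if} one of conditions (1) or (2) holds; condition (1) forces $\alpha \in K^*$ and condition (2) forces $\alpha^2 \in K^* \setminus K^{*2}$. Since $\alpha \in K^*$ trivially gives $\alpha^2 \in K^*$, in either case $\alpha^2 \in K^*$, which is exactly the desired conclusion. Note that the corollary does not even require checking \emph{which} of (1) or (2) actually holds, nor verifying the detailed cocycle compatibility conditions on $r_\drj$; the mere fact that \emph{some} descent occurs already pins down $\alpha^2 \in K^*$ via the trichotomy of Proposition \ref{Pfields}, whose case (3) (the empty set) is precisely the negation of $\alpha^2 \in K^*$.

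The only genuine point requiring care, and hence the main obstacle, is confirming that $\su_n(K,d)$ really is a $\Kb/K$-twisted form of $\sll_n(K)$ in the precise sense used by Proposition \ref{Pfields}, i.e.\ that it arises from a $\Gal(K)$-cocycle $(u_\gamma)$ in $\bAut(\g)$ (equivalently, that the unitary involution defining $\su_n$ is an $L/K$-form of the split algebra, which is classical). Once this identification is in place, the corollary is an essentially immediate consequence of Proposition \ref{Pfields}, requiring no further computation. I would therefore spend the bulk of the (short) proof recording the cocycle explicitly and citing \cite{AS} for the identification, then closing with the one-line deduction that cases (1) and (2) both yield $\alpha^2 \in K^*$.
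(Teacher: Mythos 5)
Your proposal is correct and matches the paper's intended argument: the paper states this corollary as a direct consequence of Proposition \ref{Pfields}, exactly as you do, with the observation that both admissible cases of that proposition force $\alpha^2\in K^*$. The cocycle identification for $\su_n(K,d)$ that you flag as the one point needing care is indeed the same one the paper records (in the proof of the subsequent corollary).
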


One thus distinguishes two cases: $\alpha^2\in K^*d$, in which case $K(\alpha)=L$, and $\alpha^2\notin K^*d$, in which case $K(\alpha)\cap L=K$. 
The former case is the one that is thoroughly studied in \cite{AS}, and in this case, we obtain the following result. 

\begin{Cor} Let $\alpha\in \Kb^*$ with $\alpha^2\in K^*d$. Then $\alpha\partial r_\drj$ descends to $\g'$.
\end{Cor}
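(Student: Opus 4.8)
The plan is to realize $\g'=\su_n(K,d)$ as a twisted form of $\g=\sll_n(K)$ through an explicit Galois cocycle and then apply the second case of Proposition \ref{Pfields}. First I would confirm that we are genuinely in that case. As noted just before the statement, $\alpha^2\in K^*d$ gives $K(\alpha)=K(\sqrt d)=L$; since $L\supsetneq K$ we have $\alpha\notin K$, so $\alpha^2\in K^*\setminus K^{*2}$. Moreover $r_\drj+\kappa(r_\drj)=\Omega$, so the hypotheses of Proposition \ref{Pfields} hold with $r=r_\drj$ and $\lambda=1$, and it is the dichotomy of part (2) that we must verify.

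The key step is to pin down the cocycle $(u_\gamma)$ corresponding to $\g'$. Let $H=\Gal(\Kb/L)$ be the index-two subgroup fixing $L$, and let $\chi$ be the Chevalley automorphism $x\mapsto -x^t$ of $\g$, which is defined over $K$ and satisfies $\chi^2=\Id$. I would set $u_\gamma=\Id$ for $\gamma\in H$ and $u_\gamma=\chi$ for $\gamma\notin H$. This is a cocycle in $\bAut(\g)(\Kb)$, since $\gamma\mapsto u_\gamma$ is the composite of the projection $\Gamma\to\Gal(L/K)$ with the order-two subgroup $\{\Id,\chi\}$ and $\chi$ is $\Gal$-fixed. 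I claim its twisted fixed-point algebra is exactly $\su_n(K,d)$: for $\gamma\in H$ the condition ${}^{\gamma}x=x$ forces the entries of $x$ into $L$, while for $\gamma\notin H$ the condition $-({}^{\gamma}x)^t=x$ reads $\overline{x}^t=-x$, which is the defining skew-Hermitian relation.

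It then remains to check the criterion of Proposition \ref{Pfields}(2). Since $K(\alpha)=L$ we have $\Gal(K(\alpha))=H$, and $\gamma(\alpha)=-\alpha$ is equivalent to $\gamma\notin H$; thus the dichotomy of the proposition aligns precisely with the one defining $(u_\gamma)$. For $\gamma\in H$ the condition $(u_\gamma\otimes u_\gamma)(r_\drj)=r_\drj$ is immediate, and for $\gamma\notin H$ I must confirm $(\chi\otimes\chi)(r_\drj)=\kappa(r_\drj)$. Using $\chi(h)=-h$ on $\h$ and $\chi(X_\beta)=-X_{-\beta}$, one finds $(\chi\otimes\chi)(\Omega_\h)=\Omega_\h$ and $\chi(X_\beta)\otimes\chi(X_{-\beta})=X_{-\beta}\otimes X_\beta$, so that
\[(\chi\otimes\chi)(r_\drj)=\tfrac12\Omega_\h+\sum_{\beta\in\Delta^+}X_{-\beta}\otimes X_\beta=\kappa(r_\drj),\]
which is the special case $\pi=\Id$ of the computation already carried out in the proof of Proposition \ref{Ppi}. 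Both conditions of Proposition \ref{Pfields}(2) then hold, yielding that $\alpha\partial r_\drj$ descends to $\g'$. The only genuinely delicate point is the correct identification of the cocycle — specifically that its nontrivial value is the Chevalley involution, so that $\chi\otimes\chi$ interchanges $r_\drj$ and $\kappa(r_\drj)$; the alignment of the two dichotomies via $K(\alpha)=L$ and the final computation are then routine bookkeeping.
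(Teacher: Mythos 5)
Your proof is correct and follows essentially the same route as the paper: realize $\su_n(K,d)$ via the cocycle whose nontrivial value is the Chevalley involution $x\mapsto -x^t$, check $(\chi\otimes\chi)(r_\drj)=\kappa(r_\drj)$, and invoke case (2) of Proposition \ref{Pfields}. The paper records the key identity simply as $r_\drj^{t\otimes t}=\kappa(r_\drj)$ rather than computing it on root vectors, but the argument is the same.
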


\begin{proof} Note that the twisted form $\g'$ of $\g$ corresponds to the $\Gal(K)$-cocycle $(u_\gamma)$ defined by $u_{\gamma_\alpha}(x)
=-x^t$, where $\gamma_\alpha$ is the non-identity element of $\Gal(L/K)$, and by $u_\gamma=\Id$ whenever $\gamma\in\Gal(L)$. Since $r_\drj^{t\otimes t}=\kappa(r_\drj)$, Proposition \ref{Pfields} implies that $\alpha\partial r_\drj$ descends
to $\g'$. 
\end{proof}

\begin{Rk} In \cite{AS}, the authors obtain, over e.g.\ fields of cohomological dimension at most 2, a 
classification parametrized by $(K^*/N(L^*))^m$ for a certain power $m$. Although we are here considering \emph{isomorphism classes} of Lie bialgebras that 
become a unique \emph{isomorphism} class after scalar extension, whereas in \cite{AS} the authors consider \emph{equivalence} classes that become a unique 
\emph{equivalence} class upon extension, our results above can be used to explain the appearance of these norm classes, namely by restricting ourselves to inner automorphisms in Proposition \ref{PGalois}. 
Let us first clarify what we mean. The embedding $\g'\to\g_L$ defines an $L$-isomorphism of Lie algebras $\g'_L\to\g_L$. A Lie bialgebra structure $\delta'$ on $\g'$ induces, by means of this isomorphism, an $L$-Lie bialgebra structure $\delta'_L$ on $\g_L$. We shall consider those $\delta'$ where $\delta'_L\simeq\sqrt{d}\partial r_\drj$ via an 
inner automorphism of $\g_L$, i.e.\ an element of the form $\Ad_X$ for $X\in\bGL(L)$. Two such structures on $\g'$ are considered (gauge) equivalent if they are isomorphic via an inner automorphism of $\g'$. Recall further that an inner automorphism of $\g_L$ is an automorphism of $(\g_L,\partial r_\drj)$ if and only if it is of the form $\Ad_D$ with $D\in\bH_n(L)$, where $\bH$ is the split torus of $\bGL_n$ fixed by the choice of a pinning.

This corresponds to considering, in $\overline Z^1_\alpha$, those cocycles $(u_\gamma)$ with $u_{\gamma_\alpha}=\Ad_{D_u}\chi$ for 
the generator $\gamma_\alpha$ of $\Gal(L/K)$ with $D_u\in\bH_n(L)$, where $\chi$ is the Chevalley automorphism. 
(Note that in general, not every $D$ satisfies $\Ad_D\chi\in\overline Z^1_\alpha(L)$, as the map in Theorem \ref{Ttwisted} may 
not be surjective. We will not go into details, but refer to \cite{AS}.) Two cocycles $\Ad_{D_u}\chi$ and $\Ad_{D_v}\chi$ 
are then equivalent if they satisfy \eqref{equiv} with $\rho=\Ad_D$ for some $D\in\bH(L)$. This equivalence condition translates as
\[\Ad_{D_v}\chi=\Ad_D^{-1}\Ad_{D_u}\chi\Ad_{\overline D}.\]
By definition of $\chi$, and the fact that $D^t=D$, this is equivalent to
\[\Ad_{D_v}\chi=\Ad_D^{-1}\Ad_{D_u}\Ad_{\overline D}^{-1}\chi,\]
i.e.\ $\Ad_{D_u}=\Ad_{D\overline D D_v}$, which amounts to saying that each entry of $D_v$ differs from the corresponding entry of $D_u$ only by an element of $N_{L/K}(L^*)$. We thus 
retrieve the result obtained in \cite{AS}. (Technically, $\overline Z^1_\alpha$ was defined with respect to the extension $\Kb/K$, but one can similarly consider any finite Galois 
extension.)
\end{Rk}

\end{document}